\newtheorem{THM}{Theorem}
\newtheorem{thm}{Theorem}[section]
\newtheorem{Lem}[thm]{Lemma}
\newtheorem{Prop}[thm]{Proposition}
\newtheorem{cor}[thm]{Corollary}
\theoremstyle{definition}
\newtheorem{Bsp}{Example}
\newtheorem*{Problem}{Problem}
\newtheorem*{thmgeneralk}{Theorem~\ref{generalk}}
\newtheorem*{thmmatroids}{Theorem~\ref{matroids}}
\newtheorem*{thmclusters}{Theorem~\ref{clusters}}
\newenvironment{txteq}
  {
    \begin{equation}
    \begin{minipage}[c]{0.85\textwidth} % set width to 0.9 x textwidth
    \em                                % switch on emph
  }
  {\end{minipage}\end{equation}\ignorespacesafterend}
\newenvironment{txteq*}
  {
    \begin{equation*}
    \begin{minipage}[c]{0.85\textwidth} % set width to 0.9 x textwidth
    \em                                % switch on emph
  }
  {\end{minipage}\end{equation*}\ignorespacesafterend}
\newcommand{\lek}{(<$$k)}
\newcommand{\defgl}{\mathrel{\mathop:}=}
\def\dcl(#1){\lceil #1 \rceil}% down-closure
\newcommand{\pdffig}[4]{
   \begin{figure}[#4]
      \centering
      \includegraphics[width=#3\textwidth]{#1.pdf}
      \caption{#2}\label{pic:#1}
   \end{figure}
}
\def\vu{\kern-1pt\lowfwd u{1.5}1\kern-1pt}
\def\uv{\kern-1pt\lowbkwd u{1.5}1\kern-1pt}
\def\vw{\kern-1pt\lowfwd w{1.5}1\kern-1pt}
\def\wv{\kern-1pt\lowbkwd w{1.5}1\kern-1pt}
\def\vedash{{\mathop{\kern0pt e\lower.5pt\hbox{${}% logically \v(e')
      \scriptstyle'$}}\limits^{\kern0pt\raise.02ex
      \vbox to 0pt{\hbox{$\scriptscriptstyle\rightarrow$}\vss}}}}
\def\evdash{{\mathop{\kern0pt e\lower.5pt\hbox{${}% logically \v(e')
      \scriptstyle'$}}\limits^{\kern0pt\raise.02ex
      \vbox to 0pt{\hbox{$\scriptscriptstyle\leftarrow$}\vss}}}} 
\def\lowfwd #1#2#3{{\mathop{\kern0pt #1}\limits^{\kern#2pt\raise.#3ex
\vbox to 0pt{\hbox{$\scriptscriptstyle\rightarrow$}\vss}}}}
\def\lowbkwd #1#2#3{{\mathop{\kern0pt #1}\limits^{\kern#2pt\raise.#3ex
\vbox to 0pt{\hbox{$\scriptscriptstyle\leftarrow$}\vss}}}}
\def\ve{\kern-1pt\lowfwd e{1.5}2\kern-1pt}
\def\vedash{{\mathop{\kern0pt e\lower.5pt\hbox{${}% logically \v(e')
     \scriptstyle'$}}\limits^{\kern0pt\raise.02ex
     \vbox to 0pt{\hbox{$\scriptscriptstyle\rightarrow$}\vss}}}}
\def\evdash{{\mathop{\kern0pt e\lower.5pt\hbox{${}% logically \v(e')
     \scriptstyle'$}}\limits^{\kern0pt\raise.02ex
     \vbox to 0pt{\hbox{$\scriptscriptstyle\leftarrow$}\vss}}}}
\def\vf{\kern-1pt\lowfwd f{1.5}2\kern-1pt}
\def\vr{\lowfwd r{1.5}2}
\def\rv{\lowbkwd r02}
\def\vri{\lowfwd {r_i}12}
\def\vrj{\lowfwd {r_j}12}
\def\rvk{\lowbkwd {r_k}02}
\def\vrdash{{\mathop{\kern0pt r\lower.5pt\hbox{${}% logically \v(e')
     \scriptstyle'$}}\limits^{\kern0pt\raise.02ex
     \vbox to 0pt{\hbox{$\scriptscriptstyle\rightarrow$}\vss}}}}
\def\rvdash{{\mathop{\kern0pt r\lower.5pt\hbox{${}% logically \v(e')
     \scriptstyle'$}}\limits^{\kern0pt\raise.02ex
     \vbox to 0pt{\hbox{$\scriptscriptstyle\leftarrow$}\vss}}}}
\def\vrddash{{\mathop{\kern0pt r\lower.5pt\hbox{${}% logically \v(e')
     \scriptstyle''$}}\limits^{\kern0pt\raise.02ex
     \vbox to 0pt{\hbox{$\scriptscriptstyle\rightarrow$}\vss}}}}
\def\rvddash{{\mathop{\kern0pt r\lower.5pt\hbox{${}% logically \v(e')
     \scriptstyle''$}}\limits^{\kern0pt\raise.02ex
     \vbox to 0pt{\hbox{$\scriptscriptstyle\leftarrow$}\vss}}}}
\def\vrone{\lowfwd {r_1}02}
\def\vrtwo{\lowfwd {r_2}12}
\def\vrthree{\lowfwd {r_3}12}
\def\rvthree{\lowbkwd {r_3}02}
\def\vR{{\vec R}} % {{\fwd S3}}
\def\vs{\lowfwd s{1.5}1}
\def\sv{\lowbkwd s{1.5}1}
\def\vsdash{{\mathop{\kern0pt s\lower.5pt\hbox{${}% logically \v(e')
     \scriptstyle'$}}\limits^{\kern0pt\raise.02ex
     \vbox to 0pt{\hbox{$\scriptscriptstyle\rightarrow$}\vss}}}}
\def\svdash{{\mathop{\kern0pt s\lower.5pt\hbox{${}% logically \v(e')
     \scriptstyle'$}}\limits^{\kern0pt\raise.02ex
     \vbox to 0pt{\hbox{$\scriptscriptstyle\leftarrow$}\vss}}}}
\def\vsidash{{\mathop{\kern0pt s_i\kern-3.5pt\lower.3pt\hbox{${}
     \scriptstyle'$}}\limits^{\kern0pt\raise.02ex
     \vbox to 0pt{\hbox{$\scriptscriptstyle\rightarrow$}\vss}}}}
\def\vS{{\vec S}} % {{\fwd S3}}
\def\vSr{{\vec S}_{\raise.1ex\vbox to 0pt{\vss\hbox{$\scriptstyle\ge\vr$}}}}
\def\vSrone{{\vec S}_{\raise.1ex\vbox to 0pt{\vss\hbox{$\scriptstyle\ge\vrone$}}}}
\def\vSdash{{\mathop{\kern0pt S\lower-1pt\hbox{${}% logically \v(e')
     \scriptstyle'$}}\limits^{\kern2pt\raise.1ex
     \vbox to 0pt{\hbox{$\scriptscriptstyle\rightarrow$}\vss}}}}
\def\vt{\lowfwd t{1.5}1}
\def\tv{\lowbkwd t{1.5}1}
\def\vtdash{{\mathop{\kern0pt t\lower-1pt\hbox{${}% logically \v(e')
     \scriptstyle'$}}\limits^{\kern0pt\raise.02ex
     \vbox to 0pt{\hbox{$\scriptscriptstyle\rightarrow$}\vss}}}}
\def\tvdash{{\mathop{\kern0pt t\lower-1pt\hbox{${}% logically \v(e')
     \scriptstyle'$}}\limits^{\kern-1pt\raise.02ex
     \vbox to 0pt{\hbox{$\scriptscriptstyle\leftarrow$}\vss}}}}
\def\vT{{\vec T}} % {{\fwd S3}}
\def\vTdash{{\mathop{\kern0pt T\lower-1pt\hbox{${}% logically \v(e')
     \scriptstyle'$}}\limits^{\kern2pt\raise.1ex
     \vbox to 0pt{\hbox{$\scriptscriptstyle\rightarrow$}\vss}}}}
\def\vTkdash{{\mathop{\kern0pt T_k\kern-3pt\lower-2pt\hbox{${}%
     \scriptstyle'$}}\limits^{\kern2pt\raise.1ex
     \vbox to 0pt{\hbox{$\scriptscriptstyle\rightarrow$}\vss}}}}
\def\vU{{\vec U}\!} % {{\fwd S3}}
\def\vx{\lowfwd x{1.5}2}
\def\vy{\lowfwd y{1.5}2}
\def\sub{\subseteq}
\def\supe{\supseteq}
\def\sm{\smallsetminus}
\def\td{tree-decom\-po\-si\-tion}
\def\restricts{\!\restriction\!}
\def\O{\mathcal{O}}
\def\P{\mathcal{P}}
\def\PT{P_{T'}}
\def\T{\mathcal{T}}
\def\V{\mathcal{V}}
\newcommand\COMMENT[1]{}
\begin{document}

\title{Profiles of separations:\\ in graphs, matroids, and beyond}
\author{Reinhard Diestel \and Fabian Hundertmark\and Sahar Lemanczyk\\ [10pt]
  Mathematisches Seminar, Universit\"at Hamburg}

\maketitle

\begin{abstract}\noindent
  We show that all the tangles in a finite graph or matroid can be distinguished by a single \td\ that is invariant under the automorphisms of the graph or matroid. This comes as a corollary of a similar decomposition theorem for more general combinatorial structures, which has further applications.

These include a new approach to cluster analysis and image segmentation. As another illustration for the abstract theorem, we show that applying it to edge-tangles yields the Gomory-Hu theorem.
   \end{abstract}

\section{Introduction}\label{sec:intro}

One of the oldest, and least precise, problems in the connectivity theory of graphs is how to decompose a $(k-1)$-connected graph into something like its `$k$-connected pieces'. The block-cutvertex tree of a graph achieves this for $k=2$, and Tutte~\cite{TutteGrTh} showed for $k=3$ that every 2-connected finite graph has a \td\ of adhesion~2 whose torsos are either 3-connected or cycles. Tutte's theorem has recently been extended by Grohe~\cite{GroheQuasi4} to $k=4$, and by Carmesin, Diestel, Hundertmark and Stein~\cite{confing} to arbitrary~$k$ as follows.

A \emph{k-block} of a graph $G$, where $k$ is any positive integer, is a maximal set $X$ of at least $k$ vertices such that no two vertices $x,y\in X$ can be separated in $G$ by fewer than $k$ vertices other than $x$ and~$y$. We refer to $k$-blocks for unspecified~$k$ simply as {\em blocks}.

Tutte's theorem is essentially the case $k=3$ of the following more recent result. Note however that Theorem~\ref{kblocks} does not require the graph to be $(k-1)$-connected.

\begin{THM}\label{kblocks}
   \emph{\cite[Theorem 1]{confing}} Every finite graph has a canonical \td{} 
of adhesion $<k$ that efficiently distinguishes all its $k$-blocks, for every $k\in\Bbb N$.
\end{THM}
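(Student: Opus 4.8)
The plan is to translate the combinatorial notion of a $k$-block into the language of separation systems, to distinguish the resulting objects by a canonical nested family of separations, and then to read that family off as a \td. Write $\vec S_k$ for the system of all separations $(A,B)$ of $G$ with $|A\cap B|<k$. I would proceed in three stages: (i)~attach to every $k$-block a \emph{profile} of $\vec S_k$; (ii)~prove the abstract statement that any set of profiles of $\vec S_k$ can be distinguished \emph{efficiently} by an $\mathrm{Aut}(G)$-invariant nested subset $N\subseteq\vec S_k$; and (iii)~turn $N$ into the required \td.

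For stage~(i): given a $k$-block $X$ and a separation $(A,B)\in\vec S_k$, the set $X\setminus(A\cap B)$ lies wholly in $A$ or wholly in $B$, as otherwise $A\cap B$ would be a set of fewer than $k$ vertices separating two vertices of $X$. Orienting $(A,B)$ towards the side containing $X\setminus(A\cap B)$ defines an orientation $P_X$ of $\vec S_k$, and a short check --- using that $|X|\ge k$ while every separator occurring in $\vec S_k$ has size $<k$ --- shows that $P_X$ is a consistent orientation satisfying the profile property. Distinct $k$-blocks $X\ne Y$ receive distinct profiles: by maximality $X\cup Y$ is not a $k$-block, so some set $Z$ of fewer than $k$ vertices separates a vertex of $X\setminus Y$ from one of $Y\setminus X$; the separation of $G$ whose separator is $Z$ and whose one side contains the component of $X\setminus Z$ then has (essentially all of) $X$ on one side and $Y$ on the other, so $P_X$ and $P_Y$ orient it differently --- and differently by a separation lying in $\vec S_k$.

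Stage~(ii) is the heart of the matter, and is where I expect the real difficulty to lie. One wants a single nested set $N\subseteq\vec S_k$ --- nested meaning that any two of its separations are comparable after possibly reversing one of them --- such that any two profiles of $\vec S_k$ are oriented differently by some member of $N$, and moreover \emph{efficiently}: by a member of minimum order among all separations distinguishing that particular pair. I would build $N$ by sweeping through the levels $\ell=0,1,\dots,k-1$ in turn, at level~$\ell$ adding separations of order exactly $\ell$ that distinguish a pair of profiles not yet separated. The obstacle is to do this \emph{canonically} while keeping $N$ nested: two crossing separations of the same order~$\ell$, each distinguishing some pair of profiles, can by submodularity of $(A,B)\mapsto|A\cap B|$ be replaced by corner separations of order $\le\ell$, and here the profile property is precisely what guarantees that a suitable corner still distinguishes the pair that was at stake. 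Adding, at each level, \emph{all} minimum-order distinguishers and repeatedly performing this corner replacement yields a nested family; canonicity comes from never choosing between equivalent distinguishers, and finiteness of $G$ forces the procedure to terminate. This is the abstract decomposition theorem that the paper develops.

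For stage~(iii): a nested set $N$ of separations of a finite graph is essentially a \td\ --- the separations in $N$ label the edges of a tree whose nodes are the `parts' into which $N$ cuts $G$, and the adhesion sets of this \td\ are the separators $A\cap B$ of the separations $(A,B)\in N$, all of order $<k$. As $N$ is $\mathrm{Aut}(G)$-invariant, so is the \td; and as $N$ distinguishes the profiles $P_X$ efficiently, the \td\ distinguishes the $k$-blocks efficiently. I expect stage~(ii) --- and within it the canonicity of the uncrossing --- to be by far the hardest part: an arbitrary maximal nested set of minimum-order distinguishers is neither unique nor automorphism-invariant, so one must show that corner replacement applied to the \emph{full} set of efficient distinguishers is well defined, terminates, and commutes with the automorphisms of $G$.
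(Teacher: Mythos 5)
Your three-stage plan is exactly the route the paper takes: stage (i) is Lemma~\ref{blockprofiles} (block profiles are regular $k$-profiles), stage (ii) is the canonical tangle-tree machinery of Theorems~\ref{thm1} and~\ref{thm2} — the level-by-level sweep through orders, the corner-uncrossing via submodularity and property~(P), and canonicity obtained by taking \emph{all} maximal relevant separations rather than choosing among them — and stage (iii) is Lemma~\ref{lem:canon}. The proposal is correct in outline and essentially identical in approach, with the hard part (your stage (ii)) correctly identified as the abstract theorem the paper develops.
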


Here, a \td{} $(T,\mathcal V)$  of a graph $G$ with $\mathcal V = (V_t)_{t \in T}$ is {\em canonical\/} if the automorphisms of~$G$ act naturally on~$T$: if they map every part $V_t$ to another part~$V_{t'}$ in such a way that $t\mapsto t'$ is an automorphism of~$T$. It \emph{distinguishes} two sets $b,b^\prime$ of vertices in~$G$ if $T$ has an edge~$e$ whose associated separation of~$G$ can be written as $\{A,B\}$ with $b\sub A$ and $b'\sub B$. (The edge $e=t_1 t_2\in T$ is {\em associated with\/} the separation $\{\bigcup_{t\in T_1} V_t\,,\bigcup_{t\in T_2} V_t\}$ of~$G$ where $T_i$ is the component of $T-e$ containing~$t_i$, and the least order%
   \footnote{The {\em order\/} of a separation $\{A,B\}$ of a graph is the number~$|A\cap B|$. Separations of order~$k$ are $k$-{\em separations\/}; separations of order~$<k$ are $(<k)$-{\em separations\/}.}
   of such a separation is the {\em adhesion\/} of $(T,\mathcal V)$.)
It distinguishes $b$ and~$b'$ {\em efficiently} if $e$ can be chosen so that $\{A,B\}$ has the least order~$\ell$ of any separation $\{C,D\}$ of $G$ such that $b\sub C$ and $b'\sub D$. It is not hard to show that $\ell < k$ if $b$ and~$b'$ are $k$-blocks.

More generally, given a $k$-block $b$ and a {\em $\lek$-separation\/} $\{A,B\}$ of~$G$, there cannot be vertices $x,y$ in $b$ such that $x \in A\sm B$ and $y \in B \sm A$, as these vertices would then be separated by the set $A \cap B$ of fewer than $k$ vertices. So $b$ will be a subset of $A$ or of~$B$, but not both. In this way, $b$~{\em orients\/} $\{A,B\}$: as $(A,B)$ 
if $b \subseteq B$, and as $(B,A)$ if $b \subseteq A$. (We think of an {\em oriented separation} $(A,B)$ as `pointing towards'~$B$.)

Every $k$-block $b$ of $G$ thus induces an {\em orientation\/} of the set $S_k$ of all the $\lek$-separations of~$G$: a set consisting of exactly one of the two orientations of every separation in~$S_k$. We call this orientation of $S_k$ the \emph{$k$-profile} of~$b$.

An analysis of the proof of Theorem~\ref{kblocks} shows that all it ever uses about $k$-blocks is their $k$-profiles. And the theorem itself can be expressed more easily in these terms too. Indeed, it finds a canonical \td\ $(T,\mathcal V)$ of the given graph that {\em distinguishes\/} the $k$-profiles of its $k$-blocks in the following natural sense: for every two such profiles $P$ and~$P'$ there is a separation $\{A,B\}\in S_k$ associated with an edge of~$T$ that has orientations $(A,B)\in P$ and $(B,A)\in P'$.\looseness=-1

The $k$-profiles of the $k$-blocks of a graph are not unlike its $k$-tangles: these, too, are orientations of~$S_k$.%
   \footnote{Indeed, in Section~\ref{secProfiles} we shall introduce more abstract `$k$-profiles', not necessarily induced by $k$-blocks, of which $k$-tangles~-- those of order~$k$, in the terminology of~\cite{GMX}~-- are a prime example. See~\cite{ForcingBlocks, CDHH13CanonicalAlg} for more on the relationship between profiles, blocks and tangles.}
   Indeed, the following generalization of Theorem~\ref{kblocks} is a corollary merely of rewriting its proof in terms of profiles:

\begin{THM}\emph{\cite[Theorem 4.5]{CDHH13CanonicalAlg}}\label{fixedk}
Every finite graph has a canonical \td{} of adhesion $<k$ that efficiently distinguishes all its $k$-blocks and $k$-tangles, for every fixed $k\in\Bbb N$.
\end{THM}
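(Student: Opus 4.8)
The plan is to obtain Theorem~\ref{fixedk} not by a new argument but by reading off more from the proof of Theorem~\ref{kblocks} in~\cite{confing} than its statement records. That proof produces its canonical \td\ from a canonical, $\mathrm{Aut}(G)$-invariant, \emph{nested} set $N\subseteq S_k$ of separations such that any two of the profiles in play are efficiently distinguished by some separation in~$N$; and the only thing it ever uses about a $k$-block $b$ is the orientation of $S_k$ that $b$ induces, namely its $k$-profile. So I would first check that $k$-tangles induce $k$-profiles in exactly the same formal sense that $k$-blocks do, and that the collection $\mathcal P$ of the $k$-profiles of all $k$-blocks \emph{and} all $k$-tangles of $G$ is again a set of $k$-profiles which $\mathrm{Aut}(G)$ permutes; and then re-run the construction of~\cite{confing} with this larger $\mathcal P$ in place of the set of block-profiles. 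Since a separation efficiently distinguishing two of these objects is the same thing as one efficiently distinguishing the corresponding two $k$-profiles, and since two objects inducing the same $k$-profile admit no $(<k)$-separation between them at all, the \td\ so obtained efficiently distinguishes all $k$-blocks and all $k$-tangles of $G$.

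For the first point, recall that an orientation $P$ of $S_k$ is a \emph{$k$-profile} if it is consistent and has the profile property that for all $(A,B),(C,D)\in P$ also $(A\cup C,\,B\cap D)\in P$. A $k$-tangle $\tau$ is, by definition, an orientation of $S_k$ no three of whose `small sides' cover $G$, that is, with no $(A_i,B_i)\in\tau$ for $i=1,2,3$ such that $G[A_1]\cup G[A_2]\cup G[A_3]=G$. If $\tau$ were inconsistent, two of its small sides would already cover $G$, contrary to the tangle axiom; and if the profile property failed, say $(A,B),(C,D)\in\tau$ but $(B\cap D,\,A\cup C)\in\tau$, then the three small sides $A$, $C$ and $B\cap D$ would cover $G$: every vertex outside $A\cup C$ lies in $B\cap D$, and, according to which side of $\{A,B\}$ and of $\{C,D\}$ an edge lies on, every edge has both ends in $A$, both ends in $C$, or both ends in $B\cap D$ --- again contradicting the tangle axiom. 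So every $k$-tangle is a $k$-profile. That $k$-blocks induce $k$-profiles is the computation already indicated in the introduction: consistency follows from $|b|\ge k>|A\cap B|$, and the profile property holds because $b\subseteq B$ and $b\subseteq D$ give $b\subseteq B\cap D$. Finally, automorphisms of $G$ take $k$-blocks to $k$-blocks and $k$-tangles to $k$-tangles, compatibly with the passage to $k$-profiles, so $\mathrm{Aut}(G)$ acts on $\mathcal P$.

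For the second point, the set $N$ in~\cite{confing} is built in rounds $\ell=0,1,\dots,k-1$. Given a canonical nested set $N_{<\ell}$ of separations of order $<\ell$ that already efficiently distinguishes every two profiles in $\mathcal P$ distinguishable by a separation of order $<\ell$, one forms, for each pair of profiles in $\mathcal P$ whose least distinguishing order is exactly~$\ell$, the set of all order-$\ell$ separations that efficiently distinguish it, and then \emph{canonically} merges these sets --- with one another and with $N_{<\ell}$ --- into a nested family $N_\ell$ that still efficiently distinguishes every such pair; the union $N=N_0\cup\dots\cup N_{k-1}$ is then converted into a \td\ by the standard correspondence between canonical nested separation systems and canonical tree-decompositions of~\cite{confing, CDHH13CanonicalAlg}. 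The only thing to verify for Theorem~\ref{fixedk} is that none of these steps ever inspects what the elements of $\mathcal P$ actually are: each is a function of $G$, of $\mathcal P$, and of the orders and the inclusion ordering of separations alone, and hence is blind to whether a given profile comes from a block or from a tangle. The step I expect to warrant the most care --- and the main obstacle --- is the canonical merging: when two efficiently distinguishing separations cross, they are uncrossed by replacing one of them with a corner separation, which has order $\le\ell$ by submodularity, and the claim that this corner still efficiently distinguishes one of the pairs involved is exactly where the profile property is invoked; it is invoked uniformly across $\mathcal P$, so the argument is insensitive to the enlargement of the profile set. The remaining checks --- that every choice commutes with $\mathrm{Aut}(G)$, being phrased purely in the above terms; that separations produced in different rounds remain mutually nested; and that the translation from the nested set $N$ to a \td\ preserves canonicity and the efficiency of the distinguishing --- go through verbatim. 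Once the abstract machinery of Section~\ref{secProfiles} is in place, this whole argument is simply the instance of Theorem~\ref{generalk} in which $\mathcal P$ is taken to be the set of $k$-profiles of all $k$-blocks and all $k$-tangles of $G$.
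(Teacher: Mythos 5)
Your proposal is correct and takes essentially the same route as the paper: the paper derives Theorem~\ref{fixedk} by noting that $k$-block profiles and $k$-tangles are robust regular $k$-profiles (Lemmas \ref{blockprofiles} and~\ref{lem:K-robust} -- your covering argument for consistency and (P) is exactly the content of the latter), that the resulting set of profiles is therefore robust for fixed~$k$, and then applying the canonical tangle-tree theorem (Theorem~\ref{thm2}) followed by Lemma~\ref{lem:canon} to turn the invariant tree set into a canonical tree-decomposition. The only cosmetic difference is that you re-run the concrete construction of~\cite{confing} with the enlarged profile set, whereas the paper invokes Theorem~\ref{thm2}, which is precisely the abstraction of that construction; as your closing sentence acknowledges, these are the same argument.
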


Robertson and Seymour~\cite{GMX} proved that every finite graph has a \td{} that distinguishes all its distinguishable tangles, not just all those of some fixed order~$k$. One of our main results is that we obtain a canonical such decomposition. Using profiles as a common generalization of blocks and tangles, we can even distin\-guish both at the same time, and from each other, by the same decomposition:

\begin{THM}\label{generalk}
Every finite graph has a canonical \td{} that efficiently distinguishes all its distinguishable tangles%
   \COMMENT{}
   and robust\/%
   \footnote{This is a necessary but unimportant restriction; all interesting blocks are robust (Section~\ref{subsec:robust}).}\,%
   blocks.
\end{THM}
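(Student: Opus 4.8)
The plan is to derive Theorem~\ref{generalk} from the abstract decomposition theorem for profiles (Theorem~\ref{generalk} as announced is itself about graphs, but the excerpt indicates it will be obtained as a corollary of a more general result proved in Section~\ref{secProfiles}). The first step is to set up the right universe of separations. Unlike Theorem~\ref{fixedk}, where we work inside a single set $S_k$ of $(<k)$-separations, here we must simultaneously handle tangles and blocks of \emph{all} orders, so the natural ambient object is the set $S$ of \emph{all} separations of the graph $G$, partially ordered by $(A,B)\le(C,D)$ iff $A\sub C$ and $B\supe D$, together with the order function $(A,B)\mapsto|A\cap B|$. Every distinguishable tangle and every robust block then induces a consistent orientation of~$S$ (a \emph{profile}): a block $b$ orients $\{A,B\}$ towards the side containing $b$ whenever $\{A,B\}$ has order smaller than $|b|$, and towards larger order otherwise in a way that will be made precise; a tangle of order $\theta$ orients the separations of order $<\theta$ by its small side and the rest compatibly. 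The key point, to be checked, is that these really are \emph{profiles} in the abstract sense: they are orientations closed under the relevant consistency condition (no two separations in the orientation ``point away from each other'', and the profile condition forbidding $(A\cup A',B\cap B')$ from being oriented the wrong way when $(A,B)$ and $(A',B')$ are both in the profile).

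Next I would invoke the main abstract theorem of the paper (the one Theorems~\ref{kblocks}, \ref{fixedk}, and the matroid and cluster results are all stated as corollaries of): every submodular universe of separations of a finite structure admits a canonical tree-decomposition, or more precisely a canonical \emph{nested} set $N$ of separations, that efficiently distinguishes all its distinguishable profiles. ``Canonical'' here means the construction commutes with the automorphism group of $S$, hence with $\mathrm{Aut}(G)$. Applying this with the universe $S$ above yields a canonical nested set $N\sub S$ such that any two distinguishable profiles $P\ne P'$ are separated efficiently by some $\{A,B\}\in N$: there is $\{A,B\}\in N$ with $(A,B)\in P$, $(B,A)\in P'$, and $|A\cap B|$ minimum among all separations distinguishing $P$ from $P'$. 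Since robust blocks and distinguishable tangles induce distinguishable profiles (two distinct ones are always distinguished by \emph{some} separation, essentially by definition of block/tangle together with robustness), $N$ distinguishes all of them, and efficiently.

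The final step is to convert the nested set $N$ into an actual tree-decomposition $(T,\mathcal V)$ of $G$, canonically. This is the standard correspondence between nested separation systems and tree-decompositions: the ``parts'' $V_t$ are the intersections determined by the orientations of $N$, and $T$ is the tree whose edges correspond to the separations in $N$; because $N$ is $\mathrm{Aut}(G)$-invariant, the resulting $(T,\mathcal V)$ is canonical in the sense defined in the excerpt (automorphisms of $G$ permute the parts and induce automorphisms of $T$). One technical wrinkle is that $N$ may be infinite a priori or the parts could be awkward, but over a \emph{finite} graph $S$ is finite, so $N$ is finite and the tree is finite; one should check that distinct profiles land in distinct parts is not required — only that each distinguishing separation of $N$ appears as an edge of $T$, which is immediate from the construction. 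Efficiency transfers verbatim since the order of the separation associated with an edge of $T$ equals the order of the corresponding separation in $N$.

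The main obstacle, I expect, is not any of these steps individually but the verification hidden in the first: that the correct notion of ``profile'' captures blocks and tangles of \emph{mixed} orders simultaneously, and that the robustness hypothesis on blocks is exactly what is needed to make a block's orientation a \emph{profile} (rather than merely a consistent orientation) when one compares separations of very different orders — this is where a non-robust block could fail the profile condition against a high-order separation. Equivalently, the subtlety is that the abstract theorem distinguishes \emph{profiles}, so everything comes down to showing (a) blocks $\leftrightarrow$ profiles (using robustness) and (b) tangles $\leftrightarrow$ profiles, and then (c) that ``distinguishable as blocks/tangles'' coincides with ``distinguishable as profiles.'' Once the profile framework from Section~\ref{secProfiles} is in place, (a)--(c) should be short, and the theorem follows by a direct application of the canonical decomposition theorem.
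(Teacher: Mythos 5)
Your overall architecture matches the paper's: identify blocks and tangles with profiles, invoke the abstract canonical tangle-tree theorem for a submodular universe (the paper's Corollary~\ref{cor3}, derived from Theorem~\ref{thm2}), and convert the resulting canonical tree set into a \td\ via the standard correspondence (Lemma~\ref{lem:canon}). But your first step contains a genuine gap. You propose that each block and each tangle induces an orientation of the set of \emph{all} separations of~$G$, with a $k$-block orienting separations of order~$\ge k$ ``in a way that will be made precise''. This cannot be made precise: a separation of order~$\ge k$ may split a $k$-block, so it admits no canonical orientation by that block, and no extension of $P_k(b)$ to all of~$S$ will in general be consistent or satisfy~(P). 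The paper deliberately avoids this. Its ``profiles in~$\vU$'' are $k$-profiles for varying~$k$, i.e.\ orientations of the subsystems~$S_k$ only, and the entire technical content of Theorem~\ref{thm2} -- the parameter $\kappa(P,P')$, efficiency, $n$-robustness, and the induction on~$k$ that glues the tree sets $T_Q$ into a single nested family -- exists precisely to distinguish these \emph{partial} orientations of different orders within one universe. By pretending everything lives in a single orientation of all of~$S$, your proposal assumes away the hard part of the theorem it is invoking.

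A second, smaller inaccuracy: you locate the role of robustness in making a block's orientation satisfy the profile condition~(P) against high-order separations. In the paper, $P_k(b)$ is a regular $k$-profile for \emph{every} $k$-block~$b$ (Lemma~\ref{blockprofiles}); robustness is the separate condition~(R), and it is a hypothesis of Theorem~\ref{thm2}/Corollary~\ref{cor3} needed so that the corner separations produced by Lemma~\ref{lem:nested2} can replace crossing separations when nesting $T_k$ with~$T_{k-1}$. The fix for both issues is the same: state that each $k$-tangle and each robust $k$-block induces a robust regular $k$-profile (orienting $S_k$ only), observe that distinguishability of blocks/tangles coincides with distinguishability of the induced profiles, and then apply Corollary~\ref{cor3} followed by Lemma~\ref{lem:canon} exactly as you outline in your later steps.
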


\noindent
   See Section~\ref{subsec:graphs} for precise definitions.

Representing the $k$-blocks of a graph~$G$ by their $k$-profiles represents a shift of paradigm for our initial question about those `$k$-connected components': $k$-profiles can identify highly connected regions of~$G$ without being concrete objects, such as subgraphs or blocks, simply by pointing to them. In this new paradigm, we would think of these orientations of~$S_k$ as the `abstract objects' we wish to distinguish by some \td.

Not all orientations of $S_k$ identify highly connected regions of~$G$, though: they must be consistent at least in the sense that no two oriented separations point away from each other. Profiles of blocks are naturally consistent in this sense, and tangles are too, by definition. Other consistent orientations that identify highly connected regions in a graph are explored in~\cite{ProfileDuality, TangleTreeGraphsMatroids}.

Another advantage of this new paradigm is that we can study, and seek to separate in a tree-like way, highly connected substructures in combinatorial structures other than graphs: all we need is a sensible%
   \COMMENT{}
   notion of separation. Tangles for matroids are an example of this, and we obtain the following analogue of Theorem~\ref{generalk}:

\begin{THM}\label{matroids}
   Every finite matroid has a canonical \td{} which
   efficient\-ly distinguishes all its distinguishable tangles.%
   \COMMENT{}
\end{THM}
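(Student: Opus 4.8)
The plan is to derive Theorem~\ref{matroids} from the abstract decomposition theorem announced in the introduction (the ``similar decomposition theorem for more general combinatorial structures'' of which Theorems~\ref{generalk} and~\ref{matroids} are said to be corollaries). So the first task is to cast matroid tangles in the profile framework. Recall that for a matroid $M$ on ground set $E$ with rank function $r$, a \emph{separation} of order~$k$ is an unordered pair $\{A,B\}$ with $A\cup B=E$, $A\cap B=\emptyset$, and $r(A)+r(B)-r(E)=k-1$ (the connectivity function $\lambda(A)=r(A)+r(E\setminus A)-r(E)$ playing the role that $|A\cap B|$ plays for graphs). I would first check that these separations form a submodular separation system in the sense required by the abstract theorem: the key inequality $\lambda(A)+\lambda(B)\ge\lambda(A\cap B)+\lambda(A\cup B)$ is exactly submodularity of the matroid rank function, and this is what licenses the ``nested'' or ``laminar'' structure that the tree-decomposition will encode. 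Then a $k$-tangle of~$M$ is, as usual, an orientation of the set $S_k$ of all separations of order~$<k$ that avoids the matroid tangle axioms (no three ``small'' sides covering $E$, and singletons on the small side), and in particular every $k$-tangle is a $k$-profile in the abstract sense: no two oriented separations in it point away from each other.

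Next I would invoke the abstract theorem to obtain a canonical family $N$ of separations of~$M$ — a \emph{tree set}, nested and invariant under $\mathrm{Aut}(M)$ — that efficiently distinguishes all distinguishable profiles, hence in particular all distinguishable tangles, of every order simultaneously. Here ``canonical'' means the construction commutes with matroid automorphisms, which is automatic because the abstract construction only ever refers to the separation system and its order function, both of which $\mathrm{Aut}(M)$ respects; and ``efficiently'' means that two tangles of order~$k$ and~$k'$ distinguished at all are distinguished by some separation in~$N$ of minimum possible order. The remaining step is purely structural: translate the nested set~$N$ of matroid separations into an honest tree-decomposition $(T,\mathcal V)$ of~$M$. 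For graphs this translation (nested separation system $\leftrightarrow$ tree-decomposition) is standard; for matroids one uses the analogous correspondence, where the parts $V_t$ are obtained as the ``regions'' cut out by the nested family and the torsos are the corresponding minors, and one must verify that the separations associated with the edges of~$T$ are exactly the elements of~$N$ with their orders preserved. Because $N$ was canonical, $\mathrm{Aut}(M)$ acts on~$T$, so $(T,\mathcal V)$ is canonical.

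The main obstacle I anticipate is not the tangle/profile bookkeeping — that is genuinely a corollary once the abstract theorem is in hand — but rather the passage from an abstract nested separation system to a bona fide tree-decomposition \emph{of a matroid}, together with the claim that edge-separations of this tree-decomposition have the \emph{right order}. Matroid tree-decompositions are subtler than graph ones: the natural ``torso'' is a minor, adhesion sets are not literal vertex sets, and one must be careful that taking the tree-decomposition does not inflate the order of a separation (so that ``efficiently'' survives the translation). A secondary technical point is confirming that the abstract theorem's hypotheses are met by matroid separation systems in full — in particular any ``robustness'' or submodularity condition the abstract theorem imposes on the separation system, which for matroids should follow from submodularity of~$r$ but needs to be stated and checked. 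Once these are in place, canonicity is free, and Theorem~\ref{matroids} follows.
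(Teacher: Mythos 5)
Your proposal is correct and follows essentially the same route as the paper: set up the submodular universe of bipartitions of the ground set with the connectivity order function, observe that tangles are robust regular profiles, apply the canonical tangle-tree theorem (Corollary~\ref{cor3}), and convert the resulting tree set into a \td. The one obstacle you anticipate largely dissolves because the paper defines a matroid \td\ simply as a tree together with a partition of the ground set, so the edge-separations are literally the bipartitions of the nested set and no torso/minor machinery or order-preservation argument is needed.
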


\noindent
   See Section~\ref{subsec:matroids} for definitions.

Theorem~\ref{matroids} generalizes the matroid analogue of the theorem of Robertson and Seymour mentioned earlier: Geelen, Gerards and Whittle~\cite{TanglesInMatroids} proved that all the distinguishable tangles of a matroid can be distinguished by a single \td{}. But while their \td s are again not canonical, ours are.

\medbreak

One main aspect of this paper is that tangle-like structures are meaningful, and can be canonically separated in a tree-like way, in a much more general context even than graphs and matroids combined. Just as, for proving our theorems, we do not need to know more about $k$-blocks than how they orient the separations in~$S_k$, all we need to know about these separations is how they and their orientations are, or fail to be, nested and consistent with each other. This information, however, can be captured by a simple poset with an order-reversing involution defined on the set of these separations: we do not need that they `separate' anything, such as the ground set of a matroid or the vertex set of a graph, into two sides.

We shall prove our theorems at this general level of {\em abstract separation systems\/}~\cite{AbstractSepSys}: we define {\em profiles\/} as consistent ways of orienting these abstract separations, and find in any separation system~$S$ a nested subset $T$ of separations that {\em distinguish\/} all the profiles of~$S$,%
   \COMMENT{}
   in that for every pair of distinct profiles there exists a separation in $T$ which they orient differently~-- just as the profiles of two blocks would if these lay on either side of a separation which they both orient.

The fact that we our main theorems can be proved in this abstract setting makes them applicable beyond graphs and matroids. For example, our  {\it canonical tangle-tree theorem\/}, Theorem~\ref{thm2}, of which Theorems~\ref{generalk} and~\ref{matroids} will be corollaries, can also be applied to image segmentation and cluster analysis in big data sets. In that context, clusters are described as profiles of bespoke separation systems for the data set in question, which may have nothing to do with graphs or matroids.

This approach to cluster analysis is new. It takes advantage of the fact that profiles, like real-world clusters, can be `fuzzy'. For example, consider a large grid in a graph. For every low-order separation, most of the grid will lie on the same side, so the grid `orients' that separation towards this side. But every single vertex will lie on the `wrong' side for {\em some\/} low-order separation, the side not containing most of the grid; for example, it may be separated off by its four neighbours. The grid, therefore, defines a unique $k$-profile for some large~$k$, but the `location' of this profile is not represented correctly by any one of its vertices~-- just as for a fuzzy cluster of a data set it may be impossible to say which data exactly belong to that cluster and which do not.

Profiles of abstract separation systems can capture such clusters, and our theorems can be applied to describe their relative positions, as soon as the data set comes with a submodular `order' function on its separations. In practice, it appears that most natural ways of cutting a data set in two allow for such order functions, and their choice can be used to specify the exact type of cluster to be analysed. An example from image segmentation is described in~\cite{MonaLisa}.

The application of our main result to cluster analysis in large data sets will read as follows:

\begin{THM}\label{clusters}
Every submodular data set has a canonical regular tree set of separations which efficiently distinguishes all its distinguishable clusters.
\end{THM}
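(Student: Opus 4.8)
The plan is to derive Theorem~\ref{clusters} as another corollary of the canonical tangle-tree theorem, Theorem~\ref{thm2}, exactly as Theorems~\ref{generalk} and~\ref{matroids} will be: the substance of the proof lies entirely in setting up the right translation between the language of data sets and that of abstract separation systems. First I would fix the precise definitions to be used (and given in the section on cluster analysis): a \emph{submodular data set} consists of a ground set together with a universe $\vU$ of separations of it that carries a submodular order function, and the \emph{clusters} of the data set are the profiles of the separation system formed by (a suitable subset of) $\vU$. The whole point of these definitions is that they make a submodular data set an instance of the setting of Theorem~\ref{thm2}: its separations form a submodular universe, so the theorem applies to it directly.

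Granting this, Theorem~\ref{thm2} yields a nested set $N$ of separations of the data set that is \emph{canonical}, in the sense that every symmetry of the data set --- every automorphism of $\vU$ preserving the order function --- maps $N$ to itself, and that \emph{efficiently distinguishes} all distinguishable clusters: for any two distinct clusters $P,P'$ that are distinguishable at all, there is a separation in $N$ with one orientation in $P$ and the other in $P'$, and it can be chosen of least possible order among all separations distinguishing $P$ from $P'$. It is here that the submodularity of the data set's order function enters --- as in the graph case, it is what lets one trade an efficient but badly placed distinguisher for a nested one of no larger order --- but all of this is packaged inside Theorem~\ref{thm2}.

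It then remains only to arrange that $N$ is a \emph{regular tree set}. Since $N$ is already nested, this reduces to discarding from $N$ any trivial, degenerate, or small separations it might contain; none of these can distinguish two clusters anyway --- a small separation, for instance, is oriented the same way by every profile --- so deleting them affects neither the efficient distinguishing property nor canonicity. What is left is the required canonical regular tree set of separations that efficiently distinguishes all distinguishable clusters.

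The main obstacle I anticipate is not in this deduction but in getting the definitions right beforehand: the notion of \emph{submodular data set} must be chosen broad enough to capture the intended applications --- image segmentation, clustering of large data sets --- while still ensuring that the data set's separations genuinely form a submodular universe, in particular one closed under the corner operations on which the submodularity inequality, and hence Theorem~\ref{thm2}, relies. Once that setup is in place, Theorem~\ref{clusters} is immediate.
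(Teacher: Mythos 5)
Your overall route is the paper's: Theorem~\ref{clusters} is indeed meant to be a formal corollary of the canonical tangle-tree theorem once the definitions are in place (the paper even defines a submodular data set exactly as you guess). But there are two genuine gaps in the deduction as you present it.

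First, you cannot feed ``all clusters'' into Theorem~\ref{thm2} directly. Its hypothesis is that $\P$ is a \emph{robust} set of profiles, which in particular requires the profiles in $\P$ to be pairwise distinguishable; the set of all clusters is not, since a $k$-profile and the $\ell$-profiles it induces for $\ell<k$ are indistinguishable. The paper therefore interposes Corollary~\ref{cor3}: apply Theorem~\ref{thm2} (via Corollary~\ref{cor2}) to the \emph{maximal} robust regular profiles only, and then argue separately that a separation efficiently distinguishing two maximal extensions $P_1',P_2'$ also efficiently distinguishes the smaller distinguishable profiles $P_1,P_2$ they extend, because $\kappa(P_1',P_2')\le\kappa(P_1,P_2)$ and the distinguishing separation is oriented by both $P_i$. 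That reduction is short but is a real step your argument omits. Relatedly, robustness and regularity of the clusters are \emph{hypotheses} of Theorem~\ref{thm2}, not properties you can recover afterwards; the paper handles this by \emph{defining} the clusters of a submodular data set to be its robust and regular profiles (and noting that for bipartitions all profiles are automatically robust and, for $|D|\ge 2$, regular). Robustness cannot be dropped: the paper points to a counterexample.

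Second, your cleanup step rests on a false claim: it is not true that a small separation is oriented the same way by every profile --- the paper explicitly notes that two profiles of the same separation system can differ precisely on a small separation. The statement holds only for \emph{regular} profiles, which is exactly why Theorem~\ref{thm2}(iv) delivers a regular tree set when all profiles in $\P$ are regular; no a posteriori discarding is needed (and if it were, discarding a separation that did distinguish some pair would destroy property~(i)). With clusters defined as robust and regular profiles, the tree set from Corollary~\ref{cor3} is already a canonical regular tree set, and the theorem follows with no further work.
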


Abstract separation systems were first introduced in~\cite{AbstractSepSys}. In Section~\ref{secProfiles} we give a summary of the basic formal setup, and define profiles as their orientations: a~choice of one element from each pair identified by the involution. As our main results, we then prove in Section~\ref{secTT} two general tangle-tree theorems for these abstract separation systems: one canonical, the other not but best possible in other ways. In Section~\ref{sec:applications} we then apply the first of these to graphs and matroids, obtaining Theorem~\ref{generalk} and Theorem~\ref{matroids} as corollaries. 

In Section~\ref{sec:furtherapplications} we first show how to set up abstract separation systems for cluster analysis in large data sets, and show how Theorem~\ref{clusters} follows from our canonical tangle-tree theorem. We then illustrate the non-canonical tangle-tree theorem by applying it to the `edge-tangles' of a graph. This yields the existence of Gomory-Hu trees known from optimization, reproving the well known Gomory-Hu theorem.

There is also a width-duality theorem for profiles in graphs and matroids, like those for tangles known from~\cite{GMX}: if a given graph, say,%
   \COMMENT{}
   has no $k$-profile for some given~$k$, or no $k$-profile induced by a $k$-block,%
   \COMMENT{}
   it admits a \td\ witnessing this as an easily checked certificate. This duality theorem will be proved elsewhere~\cite{ProfileDuality} but is indicated briefly in Section~\ref{sec:duality}.

We conclude in Section~\ref{sec:problems} with an open problem for tangles.

Any graph-theoretic terms not defined here can be found in \cite{DiestelBook16}. For matroid terminology we refer to~\cite{OxleyBook}. More on general abstract separation systems can be found in~\cite{AbstractSepSys}, and more on tree sets in~\cite{TreeSets}. This paper includes and supersedes~\cite{profiles}.

\section{Abstract separation systems and their profiles}\label{secProfiles}

\subsection{Separations of sets}\label{subsec:basics}

Separations in graphs and matroids are certain unordered pairs of subsets of a set: of the vertex set of the graph or the ground set of the matroid. In the case of graphs, the two subsets may overlap; 
in the case of a matroid, they partition the ground set. But in either case their union is the original set.

Given an arbitrary set~$V\!$, a {\em separation\/} of $V$ is a set $\{A,B\}$ of two subsets $A,B$ such that $A\cup B = V\!$. Its {\em order\/} is the cardinality $|A\cap B|$. Every such separation $\{A,B\}$ has two \emph{orientations}: $(A,B)$ and $(B,A)$. 
Inverting these is an involution $(A,B) \mapsto (B,A)$ on the set of these \emph{oriented separations} of~$V\!$.

The oriented separations of a set are partially ordered as
\begin{equation*}
(A,B) \leq (C,D) :\Leftrightarrow A \subseteq C\text{ and } B\supseteq D.
\end{equation*}
Our earlier involution reverses this ordering:
\begin{equation*}
(A,B)\leq (C,D) \Leftrightarrow (B,A)\ge (D,C).
\end{equation*}

The oriented separations of a set form a lattice under this partial ordering, in which $(A\cap C, B\cup D)$ is the infimum of $(A,B)$ and~$(C,D)$, and $(A\cup C, B\cap D)$ is their supremum. The infima and suprema of two separations of a graph or matroid are again separations of that graph or matroid, so these too form lattices under~$\le$.

Their induced posets of all the oriented separations of order~$<k$ for some fixed~$k$, however, need not form such a lattice: when $(A,B)$ and $(C,D)$ have order~$<k$, this need not be the case for $(A\cap C, B\cup D)$ and~$(A\cup C, B\cap D)$.

\subsection{Abstract separation systems}\label{subsec:SeparationSystems}

A {\em separation system\/} $(\vS,\le\,,\!{}^*)$ is a partially ordered set $\vS$ with an order-reversing involution${\,}^*$.  An {\em isomorphism\/} between two separation systems is a bijection between their underlying sets that respects their partial orderings and commutes with their involutions.

The elements of a separation system~$\vS$ are called {\em oriented separations\/}. When a given element of $\vS$ is denoted as~$\vs$, its {\em inverse\/}~$\vs^*$ will be denoted as~$\sv$, and vice versa. The assumption that${\,}^*$ be {\em order-reversing\/} means that, for all $\vr,\vs\in\vS$,
\begin{equation}\label{invcomp}
\vr\le\vs\ \Leftrightarrow\ \rv\ge\sv.
\end{equation}
For subsets $R\sub\vS$ we write $R^*:=\{\,\rv\mid \vr\in R\,\}$.

An (unoriented) {\em separation\/} is a set of the form $\{\vs,\sv\}$, and then denoted by~$s$.%
   \footnote{To smooth the flow of the narrative we usually also refer to oriented separations simply as `separations' if the context or use of the arrow notation~$\vs$ shows that they are oriented.}
   We call $\vs$ and~$\sv$ the {\em orientations\/} of~$s$. The set of all such sets $\{\vs,\sv\}\sub\vS$ will be denoted by~$S$. If $\vs=\sv$, we call both $\vs$ and $s$ {\em degenerate\/}.%
   \COMMENT{}

When a separation is introduced ahead of its elements and denoted by a single letter~$s$, its elements will then be denoted as $\vs$ and~$\sv$.%
   \footnote{It is meaningless here to ask which is which: neither $\vs$ nor $\sv$ is a well-defined object just given~$s$. But given one of them, both the other and $s$ will be well defined.}
   Given a set $R\sub S$ of separations, we write $\vR := \bigcup R\sub\vS$%
   \COMMENT{}
   for the set of all the orientations of its elements. With the ordering and involution induced from~$\vS$, this is again a separation system.%
   \COMMENT{}

A separation $\vr\in\vS$ is {\em trivial in~$\vS$\/}, and $\rv$ is {\em co-trivial\/}, if there exists $s \in S$ such that $\vr < \vs$ as well as $\vr < \sv$.%
   \COMMENT{}
   We call such an $s$ a {\em witness\/} of $\vr$ and its triviality. If neither orientation of~$r$ is trivial, we call~$r$ {\em nontrivial\/}.%
   \COMMENT{}
    Note that if $\vr$ is trivial in~$\vS$ then so is every $\vrdash \le \vr$.

A separation $\vs$ is {\em small\/} if $\vs\le\sv$. Trivial separations are small by~\eqref{invcomp}, but other separations can be small too. But if $\vs$ is small and $\vr < \vs$, then $\vr$ is clearly trivial.%
   \COMMENT{}
   So all but the largest small separations are in fact trivial. An unoriented separation is {\em proper\/} if it has no small orientation. A~separation system is {\it regular\/} if none of its elements is small~\cite{AbstractSepSys}.

If there are binary operations $\vee$ and~$\wedge$ on a separation system $(\vU,\le\,,\!{}^*)$ such that $\vr\vee\vs$ is the supremum and $\vr\wedge\vs$ the infimum of $\vr$ and~$\vs$ in~$\vU$, we call $(\vU,\le\,,\!{}^*,\vee,\wedge)$ a {\em universe\/} of (oriented) separations. By~\eqref{invcomp}, it satisfies De~Mor\-gan's law:
\begin{equation}\label{deMorgan}
   (\vr\vee\vs)^* =\> \rv\wedge\sv.
\end{equation}%
   \COMMENT{}

The universe~$\vU$ is {\em submodular\/}%
   \COMMENT{}
   if it comes with a submodular \emph{order function}, a real function $\vs\mapsto |\vs|$ on~$\vU$%
   \COMMENT{}
   that satisfies $0\le |\vs| = |\sv|$ and 
 $$|\vr\vee\vs| + |\vr\wedge\vs|\le |\vr|+|\vs|$$
for all $\vr,\vs\in \vU$. We call $|s| := |\vs|$ the \emph{order} of $s$ and of~$\vs$. For every integer~$k>0$, then,
 $$\vS_k := \{\,\vs\in \vU : |\vs| < k\,\}$$
is a separation system. But $\vS_k$ need not itself be a universe, at least not with respect to the operations $\lor$~and~$\land$ induced from~$\vU$, since suprema or infima in~$\vU$ of elements of~$\vS_k$ can lie outside~$\vS_k$.%
   \COMMENT{}

An {\em isomorphism\/} between universes of separations is a bijection between their ground sets that respects their orderings%
   \COMMENT{}
   and commutes with their involutions and, in the case of submodular universes, their order functions.

Separations of a set~$V\!$, and their orientations, are clearly an instance of this abstract setup if we identify $\{A,B\}$ with $\{(A,B),(B,A)\}$. The small separations of~$V$ are those of the form~$(A,V)$, the trivial ones are those of the form $(U,V)$ with $U\sub A\cap B$ for some other separation $\{A,B\}\ne\{U,V\}$, and the proper separations are those of the form $\{A,B\}$ with $A\sm B$ and $B\sm A$ both nonempty.
The separations of~$V\!$ form a submodular universe for the order function $|(A,B)| := |A,B| := |A\cap B|$.

\subsection{Tree sets of separations}\label{subsec:nested}

Given a separation system $(\vS,\le\,,\!{}^*)$, two separations $r,s\in S$ are {\em nested\/} if they have comparable orientations; otherwise they \emph{cross}. Two oriented separations $\vr,\vs$ are {\em nested\/} if $r$ and~$s$ are nested.%
   \footnote{Terms introduced for unoriented separations may be used informally for oriented separations too if the meaning is obvious, and vice versa.}%
   \COMMENT{}
   We say that $\vr$ {\em points towards\/}~$s$, and $\rv$ {\em points away from\/}~$s$, if $\vr\le\vs$ or $\vr\le\sv$. Then two nested oriented separations are either comparable, or point towards each other, or point away from each other. 

A~set of separations is {\em nested\/} if every two of its elements are nested. Two sets of separations are {\em nested\/} if every element of the first set is nested with every element of the second. A {\em tree set\/} is a nested separation system without trivial or degenerate elements.%
   \COMMENT{}
   When $\vT\sub\vS$ is a tree set, we also call $T\sub S$ a {\em tree set\/} (and {\em regular\/} if~$\vT$ is regular).

For example, the set of orientations $(u,v)$ of the edges $uv$ of a tree~$T$ form a regular tree set with respect to the involution $(u,v)\mapsto (v,u)$ and the \emph{natural partial ordering} on~$\vec E(T)$: the ordering in which $(x,y) < (u,v)$ if $\{x,y\}\ne\{u,v\}$%
   \COMMENT{}
   and the unique $\{x,y\}$--$\{u,v\}$ path in $T$ joins $y$ to~$u$.
   The oriented bipartitions of~$V(T)$ defined by deleting an edge of~$T$ form a tree set isomorphic to this one. 

The separations of a graph associated with a \td\ are also nested. But since they may be small, or even trivial, they need not form a tree set, not even an irregular one. But conversely, every tree set%
   \COMMENT{}
   of separations of a finite graph~$G$ comes from a \td\ of~$G$, which is essentially unique~\cite{TreeSets}.%
   \COMMENT{}

Any two elements $r,s$ of a universe $\vS$ of separations have four \emph{corner separations}
 $$\{(\vr \lor \vs) ,(\vr \lor \vs)^\ast \}\>,\ \{(\rv \lor \sv) ,(\rv \lor \sv)^\ast \}\>,\ \{(\vr \lor \sv) ,(\vr \lor \sv)^\ast \}\>,\ \{(\rv \lor \vs) , (\rv \lor \vs)^\ast \}$$
(see Figure~\ref{fig:nontrans}). By~\eqref{deMorgan}, these can also be expressed in terms of~$\wedge$. 

   \begin{figure}[htpb]
\centering%\vskip-12pt\vskip0pt
        \includegraphics{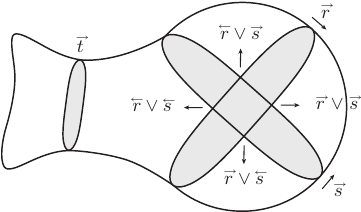}
        \caption{Separations as in Lemma~\ref{lem:fish}}
   \label{fig:nontrans}\vskip-9pt\vskip0pt
   \end{figure}

\begin{Lem} \label{lem:fish}
   Let $r,s \in \vS{}$ be two crossing separations. Every separation $t$ that is nested
   with both $r$ and $s$ is also nested with all four corner separations of $r$ and $s$.
\end{Lem}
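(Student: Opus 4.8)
The plan is to fix one orientation $\vt$ of $t$ and show that it is comparable, in the universe $\vS$, with one of the two orientations of each of the four corner separations; since two separations are nested exactly when they admit comparable orientations, that is precisely what is required. I would first reduce the four corners to one. Writing $c$ for the corner with orientations $\vr\lor\vs$ and, by~\eqref{deMorgan}, $\rv\land\sv$, note that the three hypotheses --- $r$ and $s$ cross, $t$ is nested with $r$, $t$ is nested with $s$ --- are unaffected if we rename $\vr$ as $\rv$, or $\vs$ as $\sv$, and that these renamings carry $c$ to each of the other three corners in turn. So it suffices to prove that $t$ is nested with $c$.

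To do that I would run a short case analysis on the fixed orientation $\vt$. Since $t$ is nested with $r$, some orientation of $t$ is comparable with some orientation of $r$; using the order-reversing involution~\eqref{invcomp} to phrase this through $\vt$, at least one of $\vt\le\vr$, $\vr\le\vt$, $\vt\le\rv$, $\rv\le\vt$ holds. If $\vt\le\vr$ then $\vt\le\vr\lor\vs$, and if $\rv\le\vt$ then $\rv\land\sv\le\rv\le\vt$; in both cases $\vt$ is comparable with an orientation of $c$, so we may assume instead that $\vr\le\vt$ or $\vt\le\rv$. By the same argument applied to $s$, we may assume $\vs\le\vt$ or $\vt\le\sv$. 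Now there are four combinations. If $\vr\le\vt$ and $\vs\le\vt$ then $\vt\ge\vr\lor\vs$; if $\vt\le\rv$ and $\vt\le\sv$ then $\vt\le\rv\land\sv$ --- either way $\vt$ is comparable with an orientation of $c$. If $\vr\le\vt$ and $\vt\le\sv$ then $\vr\le\sv$, and if $\vt\le\rv$ and $\vs\le\vt$ then $\vs\le\rv$; either way $r$ and $s$ would have comparable orientations, contradicting that they cross, so neither combination occurs. Hence $\vt$ is comparable with an orientation of $c$, i.e.\ $t$ is nested with $c$, as desired.

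I do not expect a genuine obstacle: the whole argument is a finite case check that uses only the order-reversing involution, the fact that $\lor$ and $\land$ are supremum and infimum in the universe, transitivity of $\le$, and the definition of crossing. The only places needing a little care are organisational: peeling off the easy sub-cases before the four-way split so that the split really is exhaustive; checking that the symmetry reducing the four corners to one is legitimate (it is, since no hypothesis singles out an orientation of $r$ or of $s$); and noting explicitly that ``$\vt$ is comparable with an orientation of $c$'' is the same as ``$t$ is nested with $c$''.
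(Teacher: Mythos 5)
Your proof is correct and is essentially the paper's argument: both boil down to showing that the hypotheses force an orientation $\vt$ of $t$ to be comparable with suitable orientations of the corners, using only that nested means comparable orientations, crossing means none, and that $\lor,\land$ are suprema and infima. The paper packages the case analysis slightly more compactly (a single orientation $\vt$ must point towards both $r$ and $s$, else $r$ and $s$ would be nested, and then $\vt$ lies below an orientation of each corner at once), whereas you reduce to one corner by symmetry and do a four-way split, but the content is the same.
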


\begin{proof}
Since $t$ is nested with $r$ and~$s$, it has an orientation pointing towards~$r$, and one pointing towards~$s$. If these orientations of $t$ are not the same, then $\vr\le\vt\le\vs$ for suitable orientations of $r,s,t$. In particular, $r$ and $s$ are nested, contrary to our assumption. Hence $t$ has an orientation $\vt$ that points towards both $r$ and~$s$.

Now $r$ and~$s$ have orientations $\vt\le\vr$ and $\vt\le\vs$. Since $\wedge$ and $\vee$ denote infima and suprema in~$\vS$, we have $\vt\le\vr\land\vs = (\rv\lor\sv)^*$ by~\eqref{deMorgan}, as well as trivially $\vt\le\vr\lor\vs$ and $\vt\le\vr\lor\sv$ and $\vt\le\rv\lor\vs$.
   \end{proof}

\subsection{Profiles of separation systems}\label{subsec:profiles}

Given a separation system $(\vS,\le\,,\!{}^*)$, a subset $O\sub\vS$ is an \emph{orientation} of $S$ (and of~$\vS$) if $O \cup O^\ast = \vS$ and $\lvert O\cap \{\vs,\sv \} \rvert = 1$ 
for all $s \in S$. Thus, $O$~contains every degenerate separation from~$\vS$ and contains exactly one orientation of every nondegenerate one. For subsets $S'\sub S$ we say that $O$ {\em induces\/} and {\em extends\/} the orientation $O\cap\vSdash$ of~$S'$, and thereby {\em orients\/}~$S'$.

A set $O\sub\vS$ is {\em consistent\/} if there are no distinct $r,s \in S$ with orientations $\vr < \vs$ such that $\rv, \vs \in O$. Consistent orientations of~$\vS$ contain all its trivial separations: if $\vr$ is trivial in~$\vS$, witnessed by~$s$ say, then $s$ cannot be oriented consistently with~$\rv$. Since $s$ must be oriented somehow, this implies that $\rv\notin O$ and hence $\vr\in O$.

Assume now that $(\vS,\le\,,\!{}^*)$ lies inside a universe $(\vU,\le\,,\!{}^*,\vee,\wedge)$.%
   \COMMENT{}
 Generalizing our notion of the profiles of blocks in graphs from the introduction, let us call an orientation $P$ of~$S$ a \emph{profile} (of~$S$ or~$\vS$) if it is consistent  and satisfies
 $$\text{For all $\vr,\vs \in P$ the separation $\rv \land \sv = (\vr\lor\vs)^*$ is not in $P$.}\eqno\rm(P)$$
Thus if $P$ contains $\vr$ and~$\vs$ it also contains $\vr\lor\vs$, unless $\vr\lor\vs\notin\vS$.%
  \COMMENT{}

A profile is {\em regular\/} if it contains no separation whose inverse is small. Examples of irregular profiles are those we call {\em special\/}: profiles $P$ of~$S$ for which there exists a separation $\vs\in P$ such that $\sv$ is small and
  $$P = \{\,\vr\in\vS\mid\vr\le\vs\,\}\sm\{\sv\}.$$%
   \COMMENT{}
   In general, $S$~can have irregular profiles that are not special, but this will not happen in the cases relevant to us.%
   \COMMENT{}

Applying (P) to any degenerate separation shows that if $S$ has a degenerate element it has no profile.%
   \COMMENT{}

Note that every subset $Q$ of a profile of~$\vS$ is a profile of~$\vR = Q\cup Q^*\sub\vS$.%
   \COMMENT{}
   Put another way, if $P$ is a profile of~$S$\vadjust{\penalty-200} and $R\sub S$, then $P\cap\vR$ is a profile of~$R$, which we say is {\em induced\/} by~$P$. If $\P$ is a set of profiles of~$S$, we write
 $$\P\restricts R := \{\,P\cap \vR\mid P\in\P\,\}$$
 for the set of profiles they induce on~$R$, and say that $\P$ {\em induces\/}~$\P\restricts R$.

For example, the $k$-profile of a $k$-block $b$ in a graph~$G$, as defined in the introduction, is the orientation $\{\,(A,B)\in\vS_k\mid b\sub B\,\}$ of the set $S_k$ of separations of~$G$ of order~$<k$. This orientation of~$S_k$ is consistent and satisfies~(P), and is thus a (regular) profile of~$S_k$. Similarly, every $k$-tangle of $G$ is a regular profile of~$S_k$; see Section~\ref{subsec:graphs}.

When $\vU$ is a universe of separations, we call the profiles of the separation systems $\vS_k\sub\vU$ of its separations of order~$<k$ the {\em $k$-profiles in\/}~$\vU$.%
   \COMMENT{}
   Even if we are interested only in the $k$-profiles for some fixed~$k$, it is usually important to have~$\vU$, rather than just~$\vS_k$, as a larger universe in which $\lor$ and~$\land$ are defined. Here is an example:

\begin{Bsp}\label{ex:resolution}
Let $G$ be a graph with a \td\ whose tree is a 3-star. Let the central part in this decomposition be a triangle $xyz$ and the other three parts pendant edges $xx'$, $yy'$ and~$zz'$. Let $\vr,\vs,\vt$ be the separations splitting $G$ at $x,y,z$ respectively, i.e., $\vr = (\{x',x\}, \{x,y,z,y',z'\})$, and similarly $\vs$ for $y$ and $\vt$ for~$z$. Then $P=\{\vr,\vs,\vt\}$, together with all the small separations of the form~$(v,V)$, is a 2-profile of~$G$ in the universe of all its separations, which points to its central triangle. Indeed, $P$~satisfies~(P) vacuously because, for example, $\vr\lor\vs$ is a separation of order~2, which does not get oriented by~$P$.

However if the universe in which $\lor$ and~$\land$ are defined was just the set $S = \{r,s,t\}$ of all the proper 1-separations of~$G$, then $\vr\lor\vs = \tv$ would violate~(P). In this sparser universe, the only 2-profiles would be the orientations of~$S$ orienting all its separations towards one of the leaves $x',y',z'$: it would lack the `resolution' for its profiles to see the central triangle as a highly connected substructure.
\end{Bsp}

Let us close this general section on profiles by showing that even in contexts where irregular profiles do occur, they are often just the special ones:

\begin{Prop}
If $\vS$ contains all trivial separations in~$\vU$ that have a witness in~$S$, then every irregular profile $P$ of~$S$ is special.
\end{Prop}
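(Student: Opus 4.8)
The plan is to take an irregular profile $P$ of $S$, extract from it a witness to the irregularity, and show that this witness already determines all of $P$ in the way required by the definition of "special".

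First I would use irregularity: by definition there is some $\vs\in P$ whose inverse $\sv$ is small, i.e. $\sv\le\vs$. Among all such separations pick one, call it $\vs$ again; if there is a choice, pick $s$ with $\vs$ maximal (this finiteness-free maximality can be arranged since, as noted in the excerpt, all but the largest small separations are trivial, so the small separations whose inverse lies in $P$ have a largest element — I would spell this out: if $\sv_1,\sv_2$ are both small with $\vs_1,\vs_2\in P$, then applying (P) to $\vs_1,\vs_2$ forces $\vs_1\lor\vs_2\in P$ unless it lies outside $\vS$; and $(\vs_1\lor\vs_2)^*=\sv_1\land\sv_2\le\sv_1\le\vs_1$ is small, so the hypothesis "$\vS$ contains all trivial separations in $\vU$ with a witness in $S$" together with the fact that small-but-not-largest separations are trivial should place $\vs_1\lor\vs_2$ back in $\vS$, giving a common upper bound inside $P$). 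So I may assume $\vs$ is the $\le$-largest separation in $P$ with $\sv$ small.

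Next I would prove the two inclusions of
$$P = \{\,\vr\in\vS\mid\vr\le\vs\,\}\sm\{\sv\}.$$
The inclusion $\subseteq$ is the heart of the matter: take any $\vr\in P$; I must show $\vr\le\vs$. Suppose not. Consider $\vr\lor\vs$. By (P), since $\vr,\vs\in P$, either $\vr\lor\vs\in P$ or $\vr\lor\vs\notin\vS$. If $\vr\lor\vs\in\vS$, then $(\vr\lor\vs)^*=\rv\land\sv\le\sv\le\vs$, so $\vr\lor\vs$ is small (being $\ge$ a separation whose inverse is small — more precisely its inverse $\rv\land\sv\le\sv$ is $\le$ a small separation, hence small), and $\vr\lor\vs\in P$ contradicts the maximality of $\vs$ unless $\vr\lor\vs=\vs$, i.e. $\vr\le\vs$. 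So I would be left with the case $\vr\lor\vs\notin\vS$; here I would argue that $(\vr\lor\vs)^*=\rv\land\sv$ is trivial in $\vU$ with witness $s\in S$ (since $\rv\land\sv<\vs$ and $\rv\land\sv<\sv$, provided the inequalities are strict — strictness follows from $\vr\not\le\vs$ and $\sv\not\le\vr$, the latter from consistency of $P$ applied to $\vs,\vr\in P$... let me instead note $\rv\land\sv\le\sv\le\vs$ and also $\rv\land\sv\le\rv$; if $\rv\land\sv=\sv$ then $\sv\le\rv$, and together with $\sv\le\vs$ this makes $s$ a witness showing $\sv$ trivial; if $\rv\land\sv=\rv$ then $\rv\le\sv\le\vs$, so $\vr\ge\sv$, and since also $\vr\ge\rv\land\sv$... ) — in all cases one of $\rv\land\sv$ or its relation to $s$ exhibits triviality, so by hypothesis $\vr\lor\vs\in\vS$ after all, returning us to the previous case. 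Finally $\sv\notin P$ by consistency (we have $\sv\le\vs$ with $\vs\in P$, and $\sv<\vs$ since $\sv=\vs$ would be degenerate and profiles have no degenerate elements, so $\sv\in P$ would violate consistency).

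For $\supseteq$: let $\vr\le\vs$ with $\vr\ne\sv$; I must show $\vr\in P$. The orientation $\rv$ satisfies $\rv\ge\sv$; if $\rv\in P$, then consistency of $P$ applied to $\sv\le$... wait, I want a contradiction from $\rv\in P$ and $\vs\in P$ together with $\rv\ge\sv$, i.e. there would be $r,s$ with orientations $\sv\le\rv$... consistency forbids $\vs,\rv\in P$ when $\vs\le\rv$, i.e. when $\sv\ge\vr$, which is exactly our situation (taking the separation $s$ in the role of both, but with the strictness $\vs\ne\rv$ needed — handle the degenerate coincidence separately). Hence $\rv\notin P$, so $\vr\in P$.

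The main obstacle I anticipate is the bookkeeping around when suprema like $\vr\lor\vs$ fall outside $\vS$: the hypothesis about trivial separations with witnesses in $S$ is precisely engineered to close this gap, and the delicate point is checking, in each sub-case, that the relevant corner $\rv\land\sv$ is genuinely trivial in $\vU$ (strict inequalities below both $\vs$ and $\sv$) so that the hypothesis applies — degenerate edge cases and non-strict inequalities are where one has to be careful, and I would route all of those through the consistency of $P$ and the absence of degenerate elements in profiles.
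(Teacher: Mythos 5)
Your argument runs on the same engine as the paper's proof---the observation that the corner $(\vr\lor\vs)^*$ lies strictly below both $\vs$ and $\sv$, hence is trivial with witness $s\in S$, so the hypothesis forces it into $\vS$---but you wrap that engine in machinery the paper does not need. The paper's proof is three lines: pick \emph{any} $\vs\in P$ with $\sv$ small. The inclusion $\{\,\vr\in\vS\mid\vr\le\vs\,\}\sm\{\sv\}\sub P$ is automatic from consistency (any $\vr<\vs$ with $r\ne s$ must lie in $P$, since $\rv\in P$ together with $\vs\in P$ would be inconsistent), so ``$P$ is not special'' means precisely that $P$ contains some $\vr\not\le\vs$; this makes your entire $\supseteq$ direction redundant. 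One then checks $\vs\not<\vr$ (else $\rv<\sv\le\vs$ is trivial, hence in $P$, contradicting $\vr\in P$), so $\vr$ and $\vs$ are incomparable, $(\vr\lor\vs)^*<\sv\le\vs$ is trivial with witness $s$, hence lies in $\vS$ by hypothesis and therefore in $P$, because consistent orientations contain all trivial separations. Together with $\vr,\vs\in P$ this violates (P) outright. Note that this last step is available to you too: in your case $\vr\lor\vs\in\vS$ you could finish immediately the same way, instead of appealing to maximality of $\vs$.

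That maximality detour is where your proposal is weakest. First, the existence of a $\le$-largest element of $\{\,\vs\in P\mid\sv\text{ small}\,\}$ requires $\vS$ to be finite (or satisfy a chain condition); the Proposition assumes neither, and the paper's proof needs neither. Your directedness argument for it also has the same strictness caveats you later wrestle with (showing $\sv_1\land\sv_2$ is genuinely trivial requires ruling out $\vs_1\le\vs_2$ and $\vs_2\le\vs_1$ separately). Second, several of your sub-cases trail off unresolved (``in all cases one of \dots exhibits triviality''). They do all close---e.g.\ $\rv\land\sv=\sv$ is impossible because it says exactly $\vr\le\vs$, which you assumed false, and $\rv\land\sv=\vs$ would force $s$ to be degenerate, which profiles exclude---but as written these are asserted rather than proved. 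So: the idea is right and completable, but you should excise the maximality scaffold and use the triviality of $(\vr\lor\vs)^*$ directly against (P).
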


\begin{proof}
Since $P$ is irregular, it contains a separation $\vs$ whose inverse $\sv$ is small. If $P$ is not special, it contains another separation $\vr\not\le\vs$.%
   \COMMENT{}
   Then also $\vs\not < \vr$, since otherwise $\rv < \sv \le \vs$ would be trivial and hence in~$P$.%
   \COMMENT{}
   Hence $\vr,\vs < \vr\lor\vs$.%
   \COMMENT{}
   But now $(\vr\lor\vs)^* < \sv\le\vs$ is trivial, hence in~$S$ by assumption, and thus also in~$P$. But this means that $P$ violates~(P), a contradiction.
\end{proof}

\section{Tangle-tree theorems for profiles}\label{secTT}
 
As explained in the introduction, the paradigm proposed in this paper is that certain consistent orientations of a separation system~$\vS$, such as profiles, can be treated like highly connected substructures of some structure which these separations separate,\vadjust{\penalty-200} even if $\vS$ is given abstractly and no such structure is known. We shall therefore be interested in finding small subsets $T$ of $S$ that `distinguish' some given profiles of~$S$, in the following natural sense.

We say that a non-degenerate separation $s\in S$ \emph{distinguishes} two orientations $O,O^\prime$ of subsets of~$S$ if $s$ has orientations $\vs \in O$ and $\sv \in O'$. (We then also say that $\vs$ and~$\sv$ themselves distinguish $O$ from~$O'$.) The sets $O,O'$ are {\em distinguishable\/} if there exists some $s\in S$ that distinguishes them. A~set $T\sub S$ {\em distinguishes}~$O$ from~$O'$ if some $s\in T$ distinguishes them, and $T$ {\em distinguishes\/} a set~$\O$ of orientations of subsets of~$S$ if it distinguishes its elements pairwise.

A~separation $r$ and its orientations are $\O${\em -relevant} if $r$ distinguishes some two elements of~$\O$. A~set of separations is $\O$-{\em relevant\/} if all its elements are.

Later, these sets $\O$ will typically be sets~$\P$ of profiles: either of profiles of some fixed separation system~$\vS$, or of profiles in some fixed universe~$\vU$ of separations. (Note that a separation $r\in U$ can distinguish an $\ell$-profile in~$\vU$ from a $k$-profile, for $\ell\ne k$, since both are subsets of~$\vU$.)\looseness=-1
   \COMMENT{}

No trivial separation in~$\vS$ can ever distinguish two profiles of~$S$, because every consistent orientation of~$S$, and in particular every profile, contains it. Since trivial separations in~$\vS_\ell$ are also trivial in any~$\vS_k\supe \vS_\ell$, this implies that trivial separations in~$\vS_\ell$ cannot distinguish an $\ell$-profile from a $k$-profile in~$\vU$ for any $\ell\le k$. 

However, a separation $\vs$ distinguishing two profiles can be small; indeed there can be profiles of the same separation system~$\vS$ that differ only in how they orient such a separation~$s$.%
   \COMMENT{}
   For $\ell < k$, however, a small separation $\vs\in \vS_\ell$ can be nontrivial in~$\vS_\ell$ but trivial in~$\vS_k$.%
   \COMMENT{}
   Such a separation~$s$ can distinguish an $\ell$-profile (which orients it as~$\sv$) from a $k$-profile (which must orient it as~$\vs$) in a common universe~$\vU$.

\subsection{Tree sets distinguishing profiles of a separation system}\label{subsec:feasible}%
   \COMMENT{}

Let $(\vS, \leq\,,\!{}^*)$ be a separation system insided some universe~$\vU$ of separations,%
   \COMMENT{}
   and $\P$ a set of profiles of~$S$.

Distinct profiles of~$S$ are always distinguished by some $s\in S$,%
   \COMMENT{}
   so $S$ distinguishes~$\P$. Our aim is to find a nested (and, in particular, small)%
   \COMMENT{}
   subset $T$ of $S$ that still distinguishes all of~$\P$. Since trivial separations do not help in distinguishing profiles, $\vT$~will normally be a tree set, though not necessarily a regular one.%
   \COMMENT{}

Moreover, we shall often ask that $T$ should be {\em canonical\/} for~$S$: that the map $(\vS,\P)\mapsto \vT$ should commute with isomorphisms of separation systems. In particular, $\vT$~will then be invariant (globally) under the automorphisms of~$\vS$ if $\P$ is.\looseness=-1

Let us start with an example showing that our task of finding a canonical nested subset $T\sub S$ that still distinguishes all of~${\P}$ is nontrivial, indeed will not be achievable without further assumptions on~$\P$ (such as \eqref{eq:feasible} below):

\begin{Bsp} \label{exmpl:feasible}
   Suppose $S$ consists of two crossing separations $r$ and~$s$, and ${\P}$ consists of the four profiles $\{\vr,\vs \}$, $\{\vr,\sv\}$, $\{\rv ,\vs\}$, $\{\rv, \sv \}$ of $S$. Since only the singleton subsets of~$S$ are nested, but each of $r$ and~$s$ distinguishes only one pair of profiles, $S$~has no nested subset $T$ that distinguishes ${\P}$.
\end{Bsp}

The problem with this example is, very loosely, that $\P$ is too large and diverse compared with~$S$. Indeed we can mend it by increasing~$S$, without materially changing~$\P$, as follows.

\begin{Bsp}
Add to~$\vS$ the separations $\vt := \vr\lor\vs$ and $\vtdash := \vr\land\vs$ together with their inverses.%
   \COMMENT{}
   Now $\P$ is no longer formally a set of profiles of this larger separation system~$\vSdash$, but in substance it is: every $P\in\P$ extends uniquely to a profile of~$\vSdash$. (Indeed, by (P) and consistency, $\{\vr,\vs\}\in\P$ extends only to~$\{\vr,\vs,\vt,\vtdash\}$ while $\{\rv,\sv\}$ extends only to~$\{\rv,\sv,\tv,\tvdash\}$; for the other two elements of~$\P$ consistency forces us to add~$\tv$ and~$\vtdash$. But these extensions are indeed profiles of~$S'$.) And now there are two tree sets in~$S'$ that distinguish these four profiles, $\{r,t,t'\}$ and $\{s,t,t'\}$.

Neither of these tree sets is canonical as yet, since they map to each other under an automorphism of~$\vSdash$.%
   \COMMENT{}
   However if we add the other two corner separations too, our separation system will be rich enough: our four profiles still remain materially unchanged, but the four corner separations form a canonical tree set that distinguishes them all.
\end{Bsp}

This motivates the following condition, which we shall see will be sufficient for the existence of a canonical tree set $T\sub S$ that still distinguishes~$\P$. Let us say that $S$ \emph{scatters}~${\P}$ if the following holds:%
   \COMMENT{}
\begin{txteq} \label{eq:feasible}
   Whenever $\vr,\vs \in \vS$ cross and there are profiles $P, P^\prime \in {\P}$ with $\vr,\vs \in P$ and $\rv, \sv \in P^\prime$, there exists a separation $\vt \in P$ such that $\vr\lor\vs\le\vt$.%
   \COMMENT{}
\end{txteq}

\noindent
(By symmetry, there is then also be a separation $\tvdash\in P'$ such that $\rv\lor\sv\le\tvdash$.)

Note that the separation~$t$ in~\eqref{eq:feasible} distinguishes $P$ from $P^\prime$, so $t$~is $\P$-relevant. Indeed while $\vt\in P$, we have $\tv \leq \sv\in P'$, so $\tv\in P'$ by the consistency of~$P'$ unless $s=t$. But $s=t$ is impossible, since $r$ crosses~$s$ but is nested with~$t$.

We shall also say that a subset $R\sub S$ {\em scatters $\P$\/} if $R$ distinguishes~$\P$ and scatters the set $\P\restricts R$ of profiles of~$R$ that $\P$ induces. For example, if $S$ scatters $\P$ then so does the set $R$ of all $\P$-relevant separations in~$S$.%
   \COMMENT{}

Condition~\eqref{eq:feasible} will help us find our nested set $T\sub S$ that still distinguishes~$\P$, as follows. Suppose we are trying to pick $T$ from~$S$ inductively. At some point we might wish to select for~$T$ some separation that distinguishes two profiles $P,P'\in\P$, and have two separations $\vr,\vs\in\vS$ that both do this. If $\vr$ and~$\vs$ cross, we can put at most one of them in~$T$. Condition~\eqref{eq:feasible} will now save us having to choose: we can pick $\vt$ instead, as this also distinguishes $P$ from~$P'$. Moreover, since $t$ is nested with both $r$ and~$s$, adding it to~$T$ will allow us to keep our options open about $r$ or~$s$: if they are eligible for~$T$ now, they will still be eligible once we have added~$t$, so we can still settle for either one of them (or neither) later.

\goodbreak

An ideal starting point for finding a tree set $T \subseteq S$ of separations that still distinguishes ${\P}$
would be a separation $t$ that distinguishes some pair of profiles in ${\P}$ and is nested with {\em all\/} of~$S$: putting such a separation~$t$ in~$T$ will achieve something, and will do no harm to the desired nestedness of our eventual~$T$.%
   \COMMENT{}

Looking for such a separation~$t$ seems like a long shot. And indeed, condition~\eqref{eq:feasible} cannot ensure its existence: since \eqref{eq:feasible} speaks only about $\P$-relevant separations, the separation~$t$ it provides need not be nested with all of~$S$.%
   \COMMENT{}
   But since only the $\P$-relevant separations in $S$ matter for us, the following lemma will suffice:

\begin{Lem}\label{lem:extremal}
   If $S$ is ${\P}$-relevant and scatters~${\P}$, then every maximal $\vs\in\vS$ is nested with all the other separations in~$S$.
\end{Lem}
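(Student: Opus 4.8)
The plan is to argue by contradiction. Suppose $\vs\in\vS$ is maximal but not nested with all of~$S$; say $\vs$ crosses some $r\in S$. Since $S$ is $\P$-relevant, the separation $s$ distinguishes some pair of profiles in~$\P$; so $\vs$ and~$\sv$ lie in profiles $P$ and~$P'$ respectively, say $\vs\in P$ and $\sv\in P'$. Likewise $r$ distinguishes some pair of profiles. The first step is to upgrade this to the statement that there are profiles containing a prescribed orientation of~$s$ together with a prescribed orientation of~$r$, so that condition~\eqref{eq:feasible} becomes applicable to the crossing pair $r,s$.

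The key point is that, because $s$ and $r$ cross, each orientation of $s$ is compatible (in the sense of crossing, hence of not being forced one way by consistency) with each orientation of~$r$; but to invoke~\eqref{eq:feasible} I need an actual pair of profiles. Here I would use that every separation in~$S$ is $\P$-relevant, so $r$ is oriented differently by two profiles, and then combine with the profiles distinguishing~$s$. Concretely, I expect the argument to run: pick $P\in\P$ with $\vs\in P$; consider how $P$ orients~$r$, say $\vr\in P$ (the other case is symmetric, replacing $r$ by its inverse); then among the profiles distinguishing~$s$ there is one, $P'$, with $\sv\in P'$, and I need $\rv\in P'$ as well. If instead $\vr\in P'$, then both $P,P'$ contain~$\vr$, and I would instead play the crossing pair from the other side: since $s$ crosses~$r$, the pair $\vr,\vs\in P$ crosses, and $\rv,\sv$ is then not simultaneously in any profile unless... — at this stage I may need to choose the profiles more carefully, possibly selecting, among all $\P$-relevant witnesses, a pair realizing all four combinations, exactly as engineered in Example~\ref{exmpl:feasible}. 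The cleanest route is: since $r$ is $\P$-relevant there are $Q,Q'$ with $\vr\in Q$, $\rv\in Q'$; since $s$ is $\P$-relevant there are $P,P'$ with $\vs\in P$, $\sv\in P'$; now $P$ orients $r$ somehow and $P'$ orients $r$ somehow, and whichever way they fall, one of $r,s$ will provide, after possibly swapping the roles/orientations, a crossing pair $\vr',\vs'$ both lying in one profile with $\rv',\sv'$ both lying in another — which is precisely the hypothesis of~\eqref{eq:feasible}.

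Once~\eqref{eq:feasible} applies, it yields a separation $\vt\in P$ with $\vr\lor\vs\le\vt$ (for the appropriate orientations). The contradiction I am after is with the maximality of~$\vs$: I want to show $\vs<\vt$, i.e.\ that $\vt$ is strictly larger than~$\vs$ in~$\vS$. Since $\vs\le\vr\lor\vs\le\vt$, certainly $\vs\le\vt$; it remains to rule out $\vs=\vt$. But $t$ is nested with~$r$ — this follows from Lemma~\ref{lem:fish}, since $t=t$ is trivially nested with the crossing pair $r,s$, hence with all their corner separations, and $\vr\lor\vs$ is exactly one orientation of such a corner separation; alternatively it is built into the discussion following~\eqref{eq:feasible}, where it is observed that $s\ne t$ because $r$ crosses~$s$ but is nested with~$t$. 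That same observation, $s\neq t$, gives $\vs\ne\vt$ directly. Hence $\vs<\vt$ with $\vt\in\vS$, contradicting the maximality of~$\vs$. Therefore no such crossing $r$ exists, and $\vs$ is nested with all of~$S$.

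The main obstacle I anticipate is the bookkeeping in the first two steps: correctly lining up the orientations so that the hypothesis of~\eqref{eq:feasible} is literally met — i.e.\ producing profiles $P\ni\vr,\vs$ and $P'\ni\rv,\sv$ with the right arrows — using only that $r$ and $s$ are each $\P$-relevant and that they cross. The crossing of $r$ and $s$ is what guarantees that no consistency obstruction prevents the four combinations from occurring among the available witnessing profiles, but making this precise (rather than hand-waving "by symmetry") is the delicate part; everything after that is a short computation with $\lor$ and the maximality of~$\vs$.
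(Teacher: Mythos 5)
Your proposal is correct and follows essentially the same route as the paper: find profiles $P\ni\vr,\vs$ and $P'\ni\rv,\sv$ using the $\P$-relevance of both $r$ and~$s$, apply~\eqref{eq:feasible}, and contradict the maximality of~$\vs$ via $\vs<\vt$ (strict because $t$ is nested with~$r$ while $s$ is not). The case analysis you flag as delicate is exactly what the paper carries out, and it is short: starting from $P\ni\vs$, $P'\ni\sv$, if both contain~$\vr$ one takes a third profile $P''\ni\rv$ (which exists by the relevance of~$r$) and, depending on how $P''$ orients~$s$, renames it as $P'$ or as $P$ (swapping $\vr$ and~$\rv$ in the latter case).
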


\begin{proof}
   Suppose some $r \in S$ crosses~$s$. Our aim is to find a pair $P,P'\in\P$ and an orientation $\vr$ of~$r$ such that $\vr,\vs\in P$ and $\rv,\sv\in P'$: then $\vs < \vt$ for the separation $\vt$ from~\eqref{eq:feasible}, contradicting the maximality of~$\vs$. (The inequality is strict, because $t$ is nested with~$r$ but $s$ is not.)

 As $s$ is ${\P}$-relevant, there are profiles $P,P^\prime \in {\P}$ such that $\vs\in P$ and $\sv\in P'$. Let $\vr$ be the orientation of~$r$ that lies in~$P$. If $\rv\in P'$ we are done, so assume that also $\vr\in P'$. Since $r$ is $\P$-relevant, there exists $P''\in\P$ such that $\rv\in P''$. If $P''$ contains~$\sv$, renaming $P''$ as~$P'$ yields the desired pair. If $P''$ contains~$\vs$, renaming $P''$ as~$P$ and swapping the names of $\vr$ and~$\rv$ yields the desired pair~$P,P'$.
\end{proof}

Every $\vs$ as in Lemma~\ref{lem:extremal} distinguishes a unique profile $P_s$ from the rest of~${\P}$:\looseness=-1

\begin{Lem}\label{lem:unique}
   If $S$ scatters~$\P$, and $\vs \in \vec{S}$ is maximal among the $\P$-relevant separations in~$\vS$,%
   \COMMENT{}
   then $\vs$ lies in a unique profile $P_{\vec s} \in {\P}$. Thus, $s$~distinguishes $P_{\vec s}$ from every other profile in~$\P$.
\end{Lem}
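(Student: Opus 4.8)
The plan is to show that if $\vs$ is maximal among the $\P$-relevant separations and lay in two distinct profiles $P,P'\in\P$, then we could produce a $\P$-relevant separation strictly above~$\vs$, contradicting maximality. Concretely, suppose $\vs\in P$ and $\vs\in P'$ with $P\ne P'$. Since $P$ and $P'$ are distinct profiles of~$S$, they are distinguished by some $r\in S$; let $\vr$ be the orientation of~$r$ with $\vr\in P$, so that $\rv\in P'$. This $r$ is $\P$-relevant. Now I would split into two cases according to whether $r$ and $s$ are nested or cross.

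If $r$ and $s$ are nested, then $\vr$ and $\vs$ have comparable orientations; I claim the only possibility consistent with $\vr,\vs\in P$ and $\rv,\sv\in P'$ (note $\sv\in P'$ since $\vs\in P$ means, wait — we have $\vs\in P'$ too by hypothesis) is problematic. Let me reorganize: the real dichotomy to exploit is between $\vr\le\vs$ (or symmetric comparabilities) and the crossing case. If $\vr$ and $\vs$ are comparable, say $\vr\le\vs$: then from $\rv\in P'$ and $\vs\in P'$ we get $\rv\le\vs$ and $\sv\le\vr$, i.e. $\vs$ and $\vr$ point away from each other in~$P'$, wait that needs care — I would instead observe that if $\vr < \vs$ then $\rv > \sv$, and $P'$ containing both $\rv$ and $\vs$ would have $\sv < \rv$ with $\sv,\rv$... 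I need $\sv\in P'$, which fails since $\vs\in P'$. So the consistent comparability cases are $\vr\le\vs$ with both in $P$ and both in $P'$: but then $\vr\in P'$ and $\rv\in P'$, impossible. Similarly $\vs\le\vr$ forces (via $\vs\in P'$, consistency) that $\rv\notin P'$ unless it is trivial; here $\rv\in P'$ gives $\sv\le\rv$, hmm. The clean way: since $\vs\in P\cap P'$ and $r$ distinguishes $P$ from $P'$, if $r,s$ were nested then $r$ would have an orientation $\le\vs$ or $\le\sv$; the orientation $\le\sv$ is excluded because $\sv\notin P$ (as $\vs\in P$ and $P$ is a profile, hence orients $s$ uniquely), so whichever orientation $\vr$ of $r$ satisfies $\vr\le\vs$ must lie in both $P$ and $P'$ by consistency (its reverse $\rv\ge\sv$ would be co-trivial-ish / inconsistent with $\vs$), contradicting that $r$ distinguishes them. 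Hence $r$ and $s$ cross.

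In the crossing case I apply \eqref{eq:feasible}. We have $\vr,\vs$ crossing and profiles $P,P'$ with $\vr,\vs\in P$ and — here is the subtlety — I need $\rv,\sv\in P'$, but I only have $\rv\in P'$ and $\vs\in P'$. To fix this, use that $\vs$ is $\P$-relevant: there is $P''\in\P$ with $\sv\in P''$. Now $\vr\lor\vs\ge\vr$ and $\ge\vs$, so in any profile containing $\sv$ consistency forces $\sv\le(\vr\lor\vs)^\ast$... I would choose whichever orientation $\vr^\circ\in\{\vr,\rv\}$ lies in $P''$, obtaining a crossing pair $r,s$ and profiles $P, P''$ with $\vs\in P$, $\sv\in P''$, and the appropriate orientation of $r$; after possibly renaming to align hypotheses (exactly as in the proof of Lemma~\ref{lem:extremal}), \eqref{eq:feasible} yields $\vt\in P$ with $\vr\lor\vs\le\vt$, hence $\vs<\vt$ (strict since $t$ is nested with $r$ while $s$ is not), and $t$ distinguishes $P$ from $P''$ as noted after \eqref{eq:feasible}, so $t$ is $\P$-relevant — contradicting maximality of $\vs$. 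The main obstacle is this bookkeeping: \eqref{eq:feasible} wants $\rv,\sv$ together in one profile, whereas distinctness of $P,P'$ only hands us $\rv\in P'$; the resolution is to feed in a third profile witnessing the $\P$-relevance of $s$, which is exactly the maneuver already used in Lemma~\ref{lem:extremal}, so I expect the argument to go through cleanly once the cases are set up. Finally, uniqueness of $P_{\vec s}$ gives immediately that $s$ distinguishes $P_{\vec s}$ from every other $P\in\P$: any such $P$ contains $\sv$, while $P_{\vec s}$ contains $\vs$.
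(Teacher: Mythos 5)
Your overall strategy is the paper's: split on whether $r$ and $s$ are nested or cross, and in the crossing case import a third profile containing $\sv$ (which exists because $s$ is $\P$-relevant) so that \eqref{eq:feasible} becomes applicable after the same renaming manoeuvre as in Lemma~\ref{lem:extremal}. The crossing case is correct modulo the bookkeeping you flag yourself: depending on whether your $P''$ contains $\rv$ or $\vr$, the separation $\vt$ produced by \eqref{eq:feasible} lands in $P$ or in $P'$ respectively, not always in $P$; but in either case $\vs<\vt$ and $t$ is $\P$-relevant, which is all you need to contradict maximality.

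The genuine gap is in the nested case. Your ``clean way'' excludes the possibility that an orientation of $r$ lies below $\sv$ ``because $\sv\notin P$'', but that is a non sequitur: whether such an ordering holds is a property of the poset $\vS$ and has nothing to do with which orientation of $s$ the profile $P$ happens to contain. And this case genuinely occurs and is genuinely consistent: if $\vs<\vr$ (equivalently $\rv<\sv$) with $\vr,\vs\in P$ and $\rv,\vs\in P'$, neither profile violates consistency~--- the forbidden pattern for the comparability $\vs<\vr$ would be $\{\sv,\vr\}$, and neither $P$ nor $P'$ contains $\sv$. What actually kills this case is the maximality of $\vs$, which your nested case never invokes: if $r$ has an orientation strictly below $\sv$, then its inverse is strictly above $\vs$ (strictness because $r\ne s$, as $P,P'$ orient $s$ identically but $r$ differently) and is $\P$-relevant, since $r$ distinguishes $P$ from $P'$; this contradicts the maximality of $\vs$ among the $\P$-relevant separations. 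This is exactly how the paper argues: maximality first reduces the nested case to $\vr<\vs$ or $\rv<\vs$, and only then does consistency of $P'$ or of $P$ finish it. With that one correction your proof goes through.
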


\begin{proof}
   As $s$ is ${\P}$-relevant, there are profiles $P,P' \in {\P}$ such that $\vs\in P$ and $\sv\in P'$.

To show that $P_{\vec s}:=P$ is unique, suppose there is another profile $P'' \in {\P}$ that  contains~$\vs$. As $P''\ne P$, there exists $r \in S$ with orientations $\vr \in P$ and $\rv \in P''$. Since $P$ and $P''$ orient $s$ identically but $r$ differently, we have $r \neq s$. Hence if $r$ is nested with~$s$, then $\vr < \vs$ or $\rv < \vs$ by the maximality of~$\vs$.%
   \COMMENT{}
   This contradicts the consistency of $P''$ or~$P$, respectively.

So $r$ crosses~$s$. If $\rv\in P'$, we apply \eqref{eq:feasible} to find $\vs  < \vt\in P$,%
   \COMMENT{}
   which contradicts the maximality of~$\vs$. If $\vr\in P'$, we apply~\eqref{eq:feasible} with $P''$ (as the $P$ in~\eqref{eq:feasible}) and~$P'$, swapping the names of $\vr$ and~$\rv$, to obtain $\vs  < \vt\in P''$ with a similar contradiction.%
   \COMMENT{}
\end{proof}

Consider an automorphism $\alpha\colon \vs\mapsto\vs{}^\alpha$ of the separation system $(\vS, \leq\,,\!{}^*)$. For subsets $P$ of~$\vS$ we write $P^\alpha \defgl \{\,\vs{}^\alpha \mid 
\vs \in P\,\}$, and put ${\P}^\alpha \defgl \{\,P^\alpha \mid P \in {\P}\,\}$ for sets of such subsets. Since $\alpha$ preserves (P) and consistency, $\P^\alpha$~is again a set of profiles of~$S$.

Note that $\alpha$ acts naturally also on the set~$S$ of unoriented separations, mapping $s=\{\vs,\sv\}$ to $s^\alpha \defgl \{\vs{}^\alpha, \sv{}^\alpha\}$. For sets $T\sub S$ we write $T^\alpha \defgl {\{\,s^\alpha \mid s \in T\,\}}$. If $R\sub S$ scatters~$\P$, then clearly $R^\alpha$ scatters~$\P^\alpha$.%
   \COMMENT{}

We are now ready to prove that separation systems scattering a set of profiles contain canonical tree sets that still distinguish these profiles. The main trick in the proof, which builds this tree set~$T$ inductively, is to toggle between adding separations to~$T$, which decreases the set~$\P$ of profiles not yet distinguished, and reducing $S$ to its $\P$-relevant subset, so that we can re-apply Lemmas~\ref{lem:extremal} and~\ref{lem:unique}.\looseness=-1

\begin{thm} \label{thm1} {\rm (Canonical tangle-tree theorem for separation systems)}\\
For every finite separation system $(\vS, \leq\,,\!{}^*)$ inside some universe of separations,%
   \COMMENT{}
   and every set $\P$ of profiles of~$S$ which $S$ scatters, there is a tree set $T\sub S$ of separations such that
   \begin{enumerate}[label = \rm{(\roman*)}]\itemsep0pt%\vskip-3pt\vskip0pt
      \item $T$ distinguishes ${\P}$;
      \item  $T$ is ${\P}$-relevant;
      \item $T$ is regular if all $P\in\P$ are regular, or if every separation in $S$ is $\P$-relevant%
   \COMMENT{}
   and no $P\in\P$ is special.
   \end{enumerate}
These sets $T=T(S,\P)$ can be chosen so that $T(S,\P)^\alpha = T(S',{\P}^\alpha)$%
   \COMMENT{}
   for every isomorphism $\alpha\colon (\vS, \leq\,,\!{}^*)\to (\vSdash, \leq\,,\!{}^*)$ of separation systems.%
   \COMMENT{}
   In particular, $T$~is invariant under any automorphism of~$(\vS, \leq\,,\!{}^*)$ that maps $\P$ to itself.%
   \COMMENT{}
\end{thm}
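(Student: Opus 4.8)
The plan is to build $T$ by a recursion that alternates two operations: (1) pass from $S$ to its $\P$-relevant subset $R$ (which still scatters $\P\restricts R$, and on which Lemmas~\ref{lem:extremal} and~\ref{lem:unique} apply), and (2) pick a maximal $\P$-relevant separation $s$, put it into $T$, and split the problem according to how the profiles in $\P$ orient $s$. By Lemma~\ref{lem:extremal}, after step~(1) every maximal $\vs\in\vec R$ is nested with all of $R$; by Lemma~\ref{lem:unique}, such an $s$ separates a unique profile $P_{\vec s}$ from all the others. So $s$ splits $\P$ into $\{P_{\vec s}\}$ and $\P_{\sv}:=\{P\in\P : \sv\in P\}$. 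The first part needs no further separations; on the second part we recurse, after first checking that $R\sm\{s\}$ (or rather its $\P_{\sv}$-relevant subset) still scatters $\P_{\sv}\restricts(R\sm\{s\})$ --- this is immediate since \eqref{eq:feasible} is inherited by subsystems and subfamilies of profiles. The recursion terminates because $S$ is finite and each step either shrinks $S$ (step~1, unless $S$ is already $\P$-relevant) or shrinks $\P$ (step~2, since $|\P_{\sv}|<|\P|$ once $|\P|\ge 2$); when $|\P|\le 1$ there is nothing to distinguish and we return $T=\emptyset$.

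For \textbf{canonicity}, the only freedom in the recursion is the choice of the maximal $\P$-relevant separation $s$ in step~(2). I would remove this freedom by taking \emph{all} of them at once: let $M$ be the set of all separations $s\in R$ such that $\vs$ or $\sv$ is maximal among $\P$-relevant separations in $\vec R$. By Lemma~\ref{lem:extremal} every element of $M$ is nested with all of $R$, hence the elements of $M$ are pairwise nested, so $M$ is a nested set. Each $s\in M$ separates off a unique profile $P_{\vec s}$ by Lemma~\ref{lem:unique}; put $M$ into $T$, and for each of the finitely many profiles $P$ that survive (those lying on the ``inner'' side of every $s\in M$) recurse on the appropriate relevant subsystem with the appropriate induced subfamily of profiles. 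Since $M$ is defined by an intrinsic extremality condition, an isomorphism $\alpha\colon(\vS,\le,{}^*)\to(\vSdash,\le,{}^*)$ carrying $\P$ to $\P^\alpha$ carries $M$ to the corresponding set $M'$ for $(\vSdash,\P^\alpha)$ and commutes with each ensuing recursive branch (the branches are indexed by the surviving profiles, which $\alpha$ permutes); a routine induction on $|S|+|\P|$ then gives $T(S,\P)^\alpha=T(S',\P^\alpha)$, and the automorphism-invariance is the special case $\vSdash=\vS$, $\P^\alpha=\P$.

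It remains to verify the three listed properties. Property~(i): the recursion only stops when $|\P|\le 1$, and at each stage the separations in $M$ distinguish $P_{\vec s}$ from everything else, so by induction $T$ distinguishes $\P$. Property~(ii): every separation we add lies in $M\subseteq R\subseteq S$ with $R$ the $\P$-relevant subset at that stage, and a separation that is $\P'$-relevant for an induced subfamily $\P'$ of $\P$ is a fortiori $\P$-relevant; hence $T$ is $\P$-relevant. That $T$ is a tree set then follows: it is nested because separations chosen at different recursion depths are nested (a separation chosen later lies strictly below, or is nested with, every separation chosen at a shallower level --- this is exactly the ``keeping options open'' remark preceding Lemma~\ref{lem:extremal}, and within one level $M$ is nested), it has no degenerate element (a degenerate separation would give $S$ no profile, contradicting $|\P|\ge 2$ at the stage it is chosen), and it has no trivial separation because trivial separations are never $\P$-relevant. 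Property~(iii): if no $P\in\P$ is regular-failing, i.e.\ all $P\in\P$ are regular, then no $\P$-relevant separation has a small inverse that some profile would be forced to orient the wrong way, so every maximal $\P$-relevant $\vs$ --- being itself not trivial --- is not small, giving regularity of $T$; and if every separation of $S$ is already $\P$-relevant and no $P\in\P$ is special, one argues that a small maximal $\P$-relevant separation $\vs$ with $\sv$ small would force $P_{\vec s}=\{\vr : \vr\le\vs\}\sm\{\sv\}$, i.e.\ $P_{\vec s}$ special, a contradiction, so again $T$ is regular.

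\textbf{Main obstacle.} The genuinely delicate point is not the recursion but ensuring that the ``scatters'' hypothesis propagates to every recursive call: when we drop $M$ and restrict $\P$ to the surviving profiles $\P'$, we must know that the relevant subsystem of $S\sm M$ scatters $\P'\restricts(S\sm M)$. Condition~\eqref{eq:feasible} is stated for the full $S$ and $\P$, and the witness $\vt$ it provides for a crossing pair $\vr,\vs$ lies in the relevant profile $P$; one has to check that $t$ is itself $\P'$-relevant (so survives the restriction) and that $t\notin M$ was already removed only if it had done its job --- in short, that no witnesses needed later get discarded. Handling this bookkeeping cleanly, rather than the combinatorics of the tree set, is where the real work of the proof lies.
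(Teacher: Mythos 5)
Your proposal follows essentially the same route as the paper's proof: induction with the profile set shrinking, reduction to the $\P$-relevant subset $R$, Lemmas~\ref{lem:extremal} and~\ref{lem:unique} to extract a canonical nested layer $M$ of maximal relevant separations, and the same arguments for (ii), (iii) and canonicity. The one point worth addressing is the ``main obstacle'' you flag at the end: it disappears entirely if you set up the recursion as the paper does, namely by keeping the separation system equal to the whole relevant subset $R$ (not $R\sm M$) and shrinking only the profile set, to $\P^- = (\P\restricts R)\sm\{\,P_{\vec s}\mid \vs\text{ maximal in }\vR\,\}$. Condition~\eqref{eq:feasible} is universally quantified over pairs of profiles, so it is inherited verbatim by any subfamily over the same separation system, and no witness $\vt$ can be lost because nothing is deleted from the system. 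The separations in $M$ are then discarded automatically at the next level of the recursion, since each of them distinguishes only the removed profile $P_{\vec s}$ from the rest and is therefore no longer $\P^-$-relevant.
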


Theorem~\ref{thm1} will be the basic building block for a more general canonical tangle-tree theorem we shall prove in the next section. There, we shall find tree sets in universes of separations whose profiles they can distinguish even when these are not profiles of the same separation system.

\medbreak{\bf Proof of Theorem~\ref{thm1}.}
   We begin by proving the theorem without statement~(iii), applying induction on $|{\P}|$ with both $S$ and~$\P$ variable.
   For $|{\P}| \leq 1$ there is no pair of profiles to distinguish, so $T(S,\P) \defgl \emptyset $
   satisfies (i) and~(ii) trivially. For the last statement note that $|{\P}^\alpha| = |{\P}|$, so $ T(S,\P)^\alpha = \emptyset = T(S',{\P}^\alpha)$.

Assume now that $|{\P}| \geq 2$. Let $R \subseteq S$ be the set of ${\P}$-relevant separations in~$S$. As $|\P|\ge 2$ we have $R\ne\emptyset$.%
   \COMMENT{}
   As remarked after~\eqref{eq:feasible}, $R$~still scatters~$\P$.%
   \COMMENT{}
   By Lemma~\ref{lem:extremal} applied to~$\P\restricts R$, the set $R^+$ of maximal elements of~$\vR$ is nested with all of~$R$, and hence so is the set $T^+$%
   \COMMENT{}
   of all (unoriented) separations in~$R$ that have an orientation in~$R^+$. In particular, $T^+$~itself is nested.

By Lemma~\ref{lem:unique}, each $\vs \in R^+$ distinguishes a unique profile $P_{\vec s} \in {\P\restricts R}$ from the rest of ${\P\restricts R}$.%
   \COMMENT{}
   Let
 $${\P}^- \defgl {\P\restricts R} \sm \{\,P_{\vec s} \mid \vs \in R^+\}.$$
 As $|{\P}^- | < | {\P} |$,%
   \COMMENT{}
   our induction hypothesis%
   \COMMENT{}
   provides a nested set $T^-\sub R$ that satisfies (i) and~(ii) for~$\P^-$. Let 
 $$T \defgl T^+\cup T^-.$$
 Clearly, $T$~is nested. It is $\P$-relevant since $T\sub R$. Thus, $T$~is a tree set satisfying~(ii). By (i) for $\P^-$ and the choice of~$T^+$, it distinguishes $\P\restricts R$ and hence also~${\P}$, so $T$ also satisfies~(i). 
  
   The last statement of the theorem is straightforward to check.%
   \COMMENT{}
   It follows from the fact that if a separation $r\in S$ is $\P$-relevant then $r^\alpha$ is $\P^\alpha$-relevant, and that $\alpha$ maps the maximal elements of~$\vR$ to those of~$\vR^\alpha$. This completes our proof of the theorem without statement~(iii).

For a proof of~(iii), suppose $T$ is irregular. Then some $s\in T$ has a small orientation, $\sv$~say. By~(ii) we have $s\in R$, so there exists $P\in\P$ with $\vs\in P$. This $P$ is irregular.%
   \COMMENT{}
   To complete the proof of~(iii), we show that if $S=R$%
   \COMMENT{}
   then $P$ is special.\looseness=-1

Note that $\vs$ is maximal in~$\vR\,$: for any $\vr > \vs$ in~$\vS$ its inverse $\rv < \sv\le\vs$ is trivial, so $r\notin R$, because only nontrivial separations can distinguish profiles.%
   \COMMENT{}
   By our assumption that $S=R$ and Lemma~\ref{lem:extremal},%
   \COMMENT{}
   the separation $s$~is nested with every other separation in~$S$. By its maximality, $\vs$~points to no other separation in~$S$, so every $r\in S\sm\{s\}$ has an orientation $\vr < \vs\in P$. By its consistency, $P$~contains~$\vr$ rather than~$\rv$. Thus, $P = \{\,\vr\mid\vr\le\vs\,\}\sm\{\sv\}$, i.e., $P$~is special.%
   \COMMENT{}
   \qed\bigbreak

Let us add a few remarks. It may seem that assertion~(ii) would come for free once we have a set $T$ satisfying the other assertions, simply by chosing it inclusion-minimal subject to~(i). This, however, will lose us the last statement, canonicity. Indeed it can happen that $T$ cannot be chosen minimal:

\begin{Bsp}\label{nonmin}
Let $G$ be a graph obtained by extending a complete graph on $k$ vertices to three otherwise disjoint complete subgraphs $B_1, B_2, B_3$ of~$n>k$ vertices each. Each of these $B_i$ is an $n$-block that induces an $n$-profile on the set $S$ of the three $k$-separations $\{A_i,B_i\}$ of $G$ consisting of $(A_i, B_i)$ and $(B_j,A_j)$ for $j\ne i$. For the set $\P$ of these profiles, Theorem~\ref{thm1} finds $T=S$. This set is not minimal with assertion~(i), since any two of the three separations in~$T$ suffice to separate all three profiles. But only if $T$ contains all three of them will it be closed under the automorphisms of~$\vS$, which all map $\P$ to itself.%
   \COMMENT{}
\end{Bsp}

Note also that without any assumptions about $\P$, such as the premise in~(iii), there need not be a regular tree set~$T$ as in Theorem~\ref{thm1}. Here is a typical example:

\begin{Bsp}
Let $\vS$ itself be a tree set,%
   \COMMENT{}
   inside some universe~$\vU$ of separations, such that $\vr\lor\vs\in\vU\sm\vS$ for all incomparable $\vr,\vs\in\vS$.%
   \COMMENT{}
   Let $\vs$ be maximal in~$\vS$. Then every $r\in S$ has an orientation $\vr\le\vs$, so $P={\{\,\vr\mid\vr\le\vs\,\}}$ is a profile of~$S$. By replacing $\vs$ with $\sv$ in~$P$ we obtain another profile~$P'$ of~$S$: note that $P'$ is again consistent, since by the maximality of~$\vs$ in~$\vS$ its new separation $\sv$ cannot be part of an inconsistent pair of separations.%
   \COMMENT{}
   Then $s$ distinguishes $P$ from~$P'$, it is the only separation that does, and hence it will lie in the tree set~$T$ satifying Theorem~\ref{thm1}.

However, it is easy to construct instances of this where $\sv$ is small, making $P$ special. For example, $S$~might be the set of separations induced by a \td\ of a graph~$G=(V,E)$, with $\vs = (V,A)$ corresponding to an edge $uv$ at a leaf~$v$ of the decomposition tree.%
   \COMMENT{}
   In this case $T$~will not be regular, because it contains the improper separation~$s$.
\end{Bsp}

\subsection{Tree sets distinguishing profiles in separation universes} \label{subsec:robust}%
   \COMMENT{}

Let $\vU\, = (\vU,\le\,,\!{}^*,\vee,\wedge,|\ |)$ be a submodular universe%
   \COMMENT{}
  of separations. Our aim is to show that $\vU$ contains a canonical tree set $T$ that distinguishes all its%
   \COMMENT{}
   profiles.

Theorem~\ref{thm1} provides a start for fixed~$k$:

\begin{cor}\label{easycor}
If $\vU$ contains no separation of order less than~$k-1$, then there is a tree set $T\sub S_k$ that distinguishes all the $k$-profiles in~$\vU$ and is invariant under its automorphisms.
\end{cor}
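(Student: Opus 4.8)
The plan is to deduce this from Theorem~\ref{thm1} by checking its single nontrivial hypothesis: that $S_k$ scatters the set $\P$ of all $k$-profiles in~$\vU$. Recall that $\vS_k=\{\vs\in\vU:|\vs|<k\}$ is a separation system sitting inside the universe~$\vU$, and that every $k$-profile in~$\vU$ is by definition a profile of~$S_k$; so the abstract machinery applies once scattering is verified. The key calculation is the submodularity inequality $|\vr\lor\vs|+|\vr\land\vs|\le|\vr|+|\vs|$, together with the extra assumption that $\vU$ has no separation of order~$<k-1$, which forces $|\vr\land\vs|\ge k-1$ and hence pins down $|\vr\lor\vs|$.

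First I would take crossing separations $\vr,\vs\in\vS_k$ and profiles $P,P'\in\P$ with $\vr,\vs\in P$ and $\rv,\sv\in P'$, as in condition~\eqref{eq:feasible}. Since $\vr,\vs\in P$ and $P$ is a profile, property~(P) gives $(\vr\lor\vs)^*=\rv\land\sv\notin P$; as $P$ orients $S_k$, this means either $\vr\lor\vs\in P\sub\vS_k$ — in which case $\vt:=\vr\lor\vs$ is the required separation in~$P$ with $\vr\lor\vs\le\vt$, and we are done — or $\vr\lor\vs\notin\vS_k$, i.e.\ $|\vr\lor\vs|\ge k$. I would then argue the latter cannot happen. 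By the hypothesis, every separation of~$\vU$ has order $\ge k-1$, so $|\vr\land\vs|\ge k-1$. Since $\vr,\vs\in\vS_k$ we have $|\vr|,|\vs|\le k-1$. Submodularity then gives $|\vr\lor\vs|\le|\vr|+|\vs|-|\vr\land\vs|\le (k-1)+(k-1)-(k-1)=k-1<k$, so in fact $\vr\lor\vs\in\vS_k$ always, and the case distinction collapses: $\vt:=\vr\lor\vs$ works. Hence $S_k$ scatters~$\P$ (vacuously in the sense that $\vr\lor\vs$ itself is always available in~$P$).

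With scattering established, Theorem~\ref{thm1} applied to $\vS_k$ and~$\P$ yields a tree set $T\sub S_k$ satisfying~(i), so $T$ distinguishes all the $k$-profiles in~$\vU$; and the canonicity clause of Theorem~\ref{thm1} says the assignment $(\vS_k,\P)\mapsto T$ commutes with isomorphisms of separation systems, in particular that $T$ is invariant under every automorphism of~$\vS_k$ fixing~$\P$. Since any automorphism of the universe~$\vU$ restricts to an automorphism of the separation system~$\vS_k$ (it preserves the order function, hence the set of separations of order~$<k$) and permutes the $k$-profiles among themselves (it preserves consistency and~(P)), $T$~is invariant under the automorphisms of~$\vU$, as required.

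I do not expect a genuine obstacle here; the only point requiring care is that the order-$\ge k-1$ hypothesis is exactly what makes the submodular estimate close, so that scattering holds for the strongest possible reason (one can always take $\vt=\vr\lor\vs$), rather than needing~\eqref{eq:feasible} in its general form. One should also make sure that $\vr\lor\vs\in P$ follows from $\vr,\vs\in P$, $(\vr\lor\vs)^*\notin P$, and the fact that $P$ orients every separation of~$S_k$ including~$\vr\lor\vs$ — this is immediate but worth stating, and it is exactly the reformulation of~(P) noted in the excerpt ("if $P$ contains $\vr$ and~$\vs$ it also contains $\vr\lor\vs$, unless $\vr\lor\vs\notin\vS$").
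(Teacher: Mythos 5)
Your proposal is correct and follows essentially the same route as the paper: verify that $S_k$ scatters the $k$-profiles by using the hypothesis to get $|\vr\land\vs|\ge k-1$, apply submodularity to conclude $\vr\lor\vs\in\vS_k$ and hence $\vr\lor\vs\in P$ by~(P), then invoke Theorem~\ref{thm1}. The extra remarks on the collapsing case distinction and on restricting automorphisms of~$\vU$ to~$\vS_k$ are fine but do not change the argument.
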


\begin{proof}
This follows from Theorem~\ref{thm1} once we have checked that $S_k$ scatters all its profiles, the $k$-profiles in~$\vU$. Note first the separations in~$S_k$ have order exactly~$k-1$, by assumption. Given $\vr,\vs\in P$ as in~\eqref{eq:feasible}, we have $|\vr\land\vs|\ge k-1$ by assumption,%
   \COMMENT{}
   and hence $|\vr\lor\vs|\le k-1$ by submodularity, since $r$ and~$s$ have order exactly~$k-1$. Hence $\vt:=\vr\lor\vs\in\vS_k$, and therefore $\vt\in P$ by (P) and $\vr,\vs\in P$.
\end{proof}

In general we do not have the helpful assumption that $|s|\ge k-1$ for all $s\in U$. But our plan is to mimic it, by considering for $k=1,2,\dots$ in turn those profiles that can be distinguished by a separation of order~$<k$. At each step, we may then assume inductively that all the profiles we still need to distinguish cannot be distinguished by a separation of smaller order, just as in the premise of Corollary~\ref{easycor}.

The challenge with this approach is that, while each application of Theorem~\ref{thm1} provides a tree set of separations, we also have to ensure that these tree sets are nested with each other. We shall achieve this in the end, but it will need some care.

Let us say that two profiles in~$\vU$%
   \COMMENT{}
   are \emph{$k$-distin\-guish\-able} if some separation of order at most~$k$ distinguishes them. The smallest $k$ for which distinguishable profiles $P,P^\prime$ in~$\vU$ are $k$-distinguishable is denoted by $\kappa(P,P^\prime)$. If $s\in U$ distinguishes $P$ from~$P^\prime$ and has order $|s| = \kappa(P, P^\prime)$, we say that $s$, $\vs$ and~$\sv$ distinguish $P$ from $P^{\prime}$ \emph{efficiently\/}. A~set $T\sub U$ distinguishes a set $\P$ of profiles in~$\vU$ {\em efficiently\/} if%
   \COMMENT{}
   every two profiles in~$\P$ are distinguished efficiently by some separation in~$T$.

Consider a pair of crossing separations $\vr,\vs\in\vU$ and their corner separations $\vr_1:=\rv\land\vs$ and $\vr_2:= \rv\land\sv$. Then $\vr_1,\vr_2\le\rv$. If $\rv$ is the supremum in~$\vU\,$%
   \COMMENT{}
   of $\vr_1$ and~$\vr_2$, i.e., if $\rv = \vr_1\lor\vr_2$,%
   \footnote{\label{robustfootnote}If $U$ is the set of all bipartitions of a set, then this is always the case. Profiles in matroids, therefore, or clusters in data sets, will always be robust in the sense defined below.}
   then $\vr_1, \vr_2$ and~$\vr$ cannot lie in a common profile, because this would violate~(P). But in general $\vr_1, \vr_2$ and~$\vr$ might lie in a common profile, say in the $k$-profile~$P$. This can only happen if $|s|\ge k$, since otherwise $P$ would contain $\sv$ or~$\vs$, but both $\{\sv,\vr,\vr_1\}\sub P$ and $\{\vs,\vr,\vr_2\}\sub P$ would violate~(P). If it does happen then, intuitively, the separation $s\in U\sm S_k$ looks a bit as though it splits~$P$ in half. This%
   \COMMENT{}
   can cause problems, so let us give a name to certain%
   \COMMENT{}
   profiles that are not split in this way.

Let us say that a profile $P$ in~$\vU$ is {\em robust\/} if it is $n$-robust for every~$n$, and $n$\emph{-robust}%
   \COMMENT{}
   if for every $\vr \in P$ and every $s\in S_n$ the following holds:
$$\text{If $\rv\land\vs$ and $\rv\land\sv$ both have order $<|r|$, they do not both lie in~$P$\rlap.}\eqno\rm(R)$$

Every $n$-profile is $n$-robust, because it has to contain $\sv$ or~$\vs$ and so, by~(P), cannot also contain both $\vr$ and~$\rv\land\vs$ or both $\vr$ and~$\rv\land\sv$. Clearly, $n$-robust $k$-profiles induce $n$-robust $\ell$-profiles for all~$\ell<k$, and every $n$-robust profile is also $m$-robust for every $m < n$. 

\begin{Lem}\label{lem:nested2}
   Let $n$ be a positive integer. Let $r\in U$ be a separation that efficiently distinguishes two $n$-robust profiles $P,P'$ in~$\vU$, and let $s\in U$ be a separation that efficiently distinguishes two profiles $\hat P, \hat P'$ in~$\vU$. If $|r| < |s| < n$, then $r$ has%
   \COMMENT{}
   an orientation~$\vr$ such that either $\vr \land \vs$ or $\vr \land \sv$ efficiently distinguishes $\hat{P}$ from~$\hat{P}^\prime$.
\end{Lem}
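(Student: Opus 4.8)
The plan is to use submodularity to locate, among the four corner separations of $r$ and $s$, one of small enough order to play the role of an efficient distinguisher for $\hat P,\hat P'$, and then to use $n$-robustness of $P,P'$ together with property (P) and consistency to pin down which corner it is. First I would fix orientations: let $\vs\in\hat P$ and $\sv\in\hat P'$ be the orientations of $s$ distinguishing $\hat P$ from $\hat P'$, and let $\vr\in P$, $\rv\in P'$ be the orientations of $r$ distinguishing $P$ from $P'$. Consider the two corner separations $\vr\wedge\vs$ and $\vr\wedge\sv$ (infima in $\vU$). By submodularity, $|\vr\wedge\vs| + |\vr\vee\sv| \le |r| + |s|$, and symmetrically $|\vr\wedge\sv| + |\vr\vee\vs|\le |r|+|s|$; adding these and using $|r|<|s|$ we get that at least one of the four corners $\vr\wedge\vs,\ \vr\wedge\sv,\ \rv\wedge\vs,\ \rv\wedge\sv$ has order at most $|s|$ — in fact I expect to be able to show one of the two corners below $\vr$ or below $\rv$ has order $\le|s|$, and one below $\vs$ (or $\sv$) has order $<|s|$ unless equality forces the easy case. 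The separation so obtained is $\le\vs$ or $\le\sv$, hence it distinguishes $\hat P$ from $\hat P'$ provided it is nontrivial and lies strictly below the appropriate orientation of $s$; since $s$ distinguishes these profiles efficiently with order $|s|$, any proper distinguisher of order $\le|s|$ is automatically efficient.

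The crux is to rule out the bad configuration in which both $\vr\wedge\vs$ and $\vr\wedge\sv$ (or both $\rv\wedge\vs$ and $\rv\wedge\sv$) have order $<|r|$ and genuinely sit inside the same profile — this is exactly what (R) forbids. So the key step is: using $|r|<|s|<n$ and the efficiency of $r$ for $P,P'$, argue that $\vr\wedge\vs$ and $\vr\wedge\sv$ cannot both have order $<|r|$ inside $P$, because $P$ is $n$-robust and $s\in S_{|s|}\subseteq S_n$, so (R) applies with this $\vr\in P$; symmetrically for $\rv\in P'$. Combined with the submodular count, this leaves a corner of order at most $|s|$ lying below one orientation of $s$ and either having order $\ge|r|$ but $\le|s|$ (hence still a valid, proper, efficient distinguisher of $\hat P,\hat P'$), or — if it had order $<|r|$ — being an even smaller separator, which would then, via consistency and (P) inside $\hat P$ or $\hat P'$, also distinguish $\hat P$ from $\hat P'$ and still be efficient because $|s|=\kappa(\hat P,\hat P')$ forbids anything strictly smaller actually working unless it ties, contradiction handled by the same efficiency bound. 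I would organize the corner orders into a short case analysis on which of the four corners realizes the submodular minimum, discarding two cases by (R)-for-$P$ and (R)-for-$P'$ and reading off $\vr$ accordingly.

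The last mile is bookkeeping: one must check the winning corner is a genuine (nondegenerate, nontrivial) separation, not a small or trivial one that every profile orients the same way; here I would invoke that $s$ itself is a proper efficient distinguisher and that $\vr\wedge\vs\le\vs$ with $\vr\wedge\vs$ sharing an orientation in $\hat P$ forces the opposite orientation into $\hat P'$ by consistency — unless that corner equals $s$, which cannot happen since then $r$ would be nested with $s$ in a way incompatible with $r$ efficiently distinguishing the $n$-robust pair at strictly smaller order; and if the corner is degenerate or trivial it cannot distinguish the two profiles $\hat P,\hat P'$ at all, contradicting the order estimate that made it a distinguisher.

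The main obstacle I anticipate is making the submodular inequality actually deliver a corner that is simultaneously (a) small enough, $\le|s|$, (b) correctly oriented, i.e.\ below $\vs$ or below $\sv$ rather than split between them, and (c) not accidentally trivial; (R), applied to the $n$-robust profiles $P$ and $P'$ separately, is precisely the hammer that removes the obstruction to (b), but threading (a) and (c) through the four-way case split without losing canonicity of the choice (since later applications of this lemma feed into a canonical construction) will require stating the case analysis so that the selected orientation $\vr$ depends only on the data $r,s,\hat P,\hat P'$ and the orders, not on arbitrary choices.
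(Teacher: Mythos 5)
Your overall strategy -- corner separations, submodularity, and robustness via (R) -- is the right one and matches the paper's, but the execution has genuine gaps. First, the argument is missing its anchor: since $|r|<|s|=\kappa(\hat P,\hat P')$ and $s$ distinguishes $\hat P,\hat P'$ \emph{efficiently}, the separation $r$ cannot distinguish them, so $\hat P$ and $\hat P'$ orient $r$ \emph{identically}, say as $\rv$. This fixes which orientation $\vr$ appears in the conclusion (it need not be the one in $P$, as you assume), and it is what makes the corner work: $\vr\land\vs\le\vs\in\hat P$ gives $\vr\land\vs\in\hat P$ by consistency, while $\rv,\sv\in\hat P'$ together with (P) gives $\vr\land\vs=(\rv\lor\sv)^*\notin\hat P'$. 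Your claim that ``consistency forces the opposite orientation into $\hat P'$'' is false: having both $\vr\land\vs$ and $\sv$ in an orientation violates neither consistency nor anything else; you genuinely need (P) with $\rv\in\hat P'$, which is why the identical orientation of $r$ must be established first. Second, your submodularity bookkeeping is wrong: $|\vr\land\vs|+|\vr\lor\sv|\le|r|+|s|$ is not an instance of submodularity (the correct pairings are of \emph{opposite} corners, e.g.\ $|\vr\land\vs|+|\rv\land\sv|\le|r|+|s|$ since $|\vr\lor\vs|=|\rv\land\sv|$), and the conclusion ``some one of the four corners has order $\le|s|$'' is too weak -- a small corner sitting under the wrong orientation of $r$ does not distinguish $\hat P$ from $\hat P'$. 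What is needed is the sharper dichotomy: if \emph{both} useful corners $\vr\land\vs$ and $\vr\land\sv$ have order $>|s|$, then by submodularity \emph{both} opposite corners $\rv\land\vs$ and $\rv\land\sv$ have order $<|r|$.

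Third, to get the contradiction from (R) you must show that these two corners actually \emph{lie in} the profile containing $\vr$, and you nowhere explain how. The paper's route: since they have order $<|r|=\kappa(P,P')$, they cannot distinguish $P$ from $P'$, so $P$ and $P'$ orient them identically; since both are $\le\rv\in P'$, consistency of $P'$ puts them in $P'$, hence also in $P$, which together with $\vr\in P$ violates (R) for the $n$-robust profile $P$ (here $s\in S_n$ because $|s|<n$). This transfer step, which uses the efficiency of $r$ for $P,P'$ in an essential way, is the heart of where robustness enters; alluding to ``the efficiency of $r$'' without it leaves the application of (R) unjustified. Your worries about degeneracy, triviality and canonicity of the choice are, by contrast, non-issues here.
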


\begin{proof}
As $|r|<|s|$ and $s$ distinguishes $\hat P$ from $\hat P'$, efficiently, $\hat P$~and $\hat P'$ also orient~$r$,%
   \COMMENT{}
   and they do so identically,%
   \COMMENT{}
   say as~$\rv$.%
   \COMMENT{}
   Let $\vs$ be the orientation of $s$ in~$\hat{P}$; then $\sv \in \hat{P}^\prime$.
If $|\vr \land \vs| \le |s|$ then $\vr \land \vs$ distinguishes $\hat P$ from~$\hat P'$ (and efficiently):%
   \COMMENT{}
   it lies in~$\hat P$ by $\vr\land\vs \le \vs\in\hat P$ and the consistency of~$\hat P$, but not in~$\hat P'$ by $\rv,\sv\in\hat P'$ and~(P). Similarly, if $|\vr\land\sv| \le |s|$ then $\vr\land\sv$ efficiently distinghuishes $\hat P$ from~$\hat P'$.%
   \COMMENT{}
   So let us assume that $|\vr\land\vs|, |\vr\land\sv| > |s|$, and derive a contradiction.

By submodularity, $\vr_1 = \rv\land\vs$ and $\vr_2 = \rv\land\sv$ have order $ < |r|$. As $r$ distinguishes $P$ from~$P'$, it has orientations $\vr\in P$ and $\rv\in P'$. As it does so efficiently, $P$~and~$P'$ orient $r_1$ and~$r_2$ identically.%
   \COMMENT{}
   As $P'$ contains $\vr_1,\vr_2\le\rv$ by consistency, we thus have  $\vr_1,\vr_2\in P$ too, as well as $\vr\in P$ by assumption. This contradicts the $n$-robustness of~$P$, since $s\in S_n$.\looseness=-1
\end{proof}

The reader will have noticed that, in the proof of Lemma~\ref{lem:nested2}, we did not in fact use that $P$ and $P'$ themselves are $n$-robust:%
   \COMMENT{}
   we only used the implied $n$-robustness of the $k$-profiles they induce for $k = |r|+1$.
Let us say that two profiles $P,P^\prime$ in~$\vU$ are \emph{$n$-robustly distinguishable} if they are distinguishable and the distinct $k$-profiles they induce for $k =\kappa(P,P^\prime) + 1$ are both $n$-robust.%
   \COMMENT{}

A set $\P$ of profiles in~$\vU$ is {\em $n$-robust\/} if any distinct $P,P'\in\P$ are $n$-robustly distinguishable%
   \COMMENT{}
  and satisfy $\kappa(P,P')<n$. If a set $\P$ of distinguishable%
   \COMMENT{}
   profiles is $n$-robust for
 $$n = \max\{\,\kappa(P,P')\mid P,P'\in\P\,\}+1,$$
   we call it {\em robust\/}. This will be the case if every $P\in\P$ is robust and the profiles in~$\P$ are pairwise distinguishable.%
   \COMMENT{}
   But there can be robust sets of profiles that are not individually robust, as their individual robustness may fail just for $n$ much%
   \COMMENT{}
   larger than the order of separations needed to distinguish~$\P$.%
   \COMMENT{}

\begin{thm}\label{thm2} {\rm (Canonical tangle-tree theorem for separation universes)}\\
   Let $\vU = (\vU,\le\,,\!{}^*,\vee,\wedge,|\ |)$ be a submodular universe%
   \COMMENT{}
 of separations. Then for every%
   \COMMENT{}
   robust set $\P$ of profiles in~$\vU$ there is a nested set $T=T({\P}) \subseteq U\!$ of separations such that:
   \begin{enumerate}[label = \rm(\roman*)]\itemsep0pt
      \item every two profiles in~${\P}$ are efficiently distinguished by some separation~in~$T$;
      \item every separation in~$T$ efficiently distinguishes a pair of profiles in ${\P}$;
      \item for every automorphism $\alpha$ of $\vU$%
   \COMMENT{}
    we have $T({\P}^{\alpha}) = T({\P})^{\alpha}$;
      \item if all the profiles in $\P$ are regular, then $T$ is a regular tree set.%
   \COMMENT{}
   \end{enumerate}
\end{thm}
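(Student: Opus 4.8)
The plan is to build the tree set $T$ by recursion on $k = 1, 2, 3, \dots$, at each stage handling the pairs of profiles with $\kappa(P,P') = k-1$, i.e.\ those that become distinguishable for the first time at order $<k$. Write $n$ for the robustness parameter of $\P$ (so $\kappa(P,P')<n$ for all distinct $P,P'\in\P$), and for each $k\le n$ let $\P_k$ be the set of $k$-profiles in $\vU$ that are \emph{induced} by members of $\P$; by the definition of an $n$-robust set, each such induced $k$-profile is $n$-robust, hence in particular satisfies the hypotheses needed to apply Lemma~\ref{lem:nested2}. First I would check that $\vS_k$ scatters $\P_k$ restricted to its $\P_k$-relevant separations: this is essentially the submodularity computation from the proof of Corollary~\ref{easycor}, except that now separations of order $<k-1$ may be present — but any two induced $k$-profiles that are distinguished by an order-$<k$ separation are \emph{not} distinguished by any separation of smaller order (otherwise they would be induced from distinct $(k-1)$-profiles, already separated at an earlier stage and removed), so the relevant separations have order exactly $k-1$ and the submodular corner argument goes through verbatim. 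Feeding this into Theorem~\ref{thm1} yields a canonical tree set $T_k\subseteq S_k$ distinguishing $\P_k$, and by (ii) of Theorem~\ref{thm1} every separation in $T_k$ has order exactly $k-1$ and efficiently distinguishes a pair of profiles in $\P$.

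The key difficulty — and the one place real work is needed beyond invoking earlier results — is that the tree sets $T_1, \dots, T_n$ produced by separate applications of Theorem~\ref{thm1} need not be nested with each other, and we need the union $T := T_1 \cup \dots \cup T_n$ (or a suitable canonical modification of it) to be nested. This is exactly what Lemma~\ref{lem:nested2} is designed for. The plan is: process the stages in increasing order of $k$, and when we reach stage $k$, instead of taking the raw $T_k$ from Theorem~\ref{thm1}, we run Theorem~\ref{thm1} inside the subuniverse generated by the separations already chosen at earlier stages, or — cleaner — we observe that if $r\in T_k$ crosses some $s\in T_j$ with $j<k$ (so $|s| = j-1 < k-1 = |r|$... wait, we need $|s|<|r|<n$; since $j\le k$ and orders are $j-1$ and $k-1$, crossing forces $j<k$ hence $|s|=j-1<k-1=|r|$, and indeed $|r|<n$), then by Lemma~\ref{lem:nested2} applied with the roles $(r,s)\leftrightarrow$(the lemma's $r$=our $s$, the lemma's $s$=our $r$)... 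I need to be careful about which separation plays which role: Lemma~\ref{lem:nested2} wants the smaller-order separation to efficiently distinguish an $n$-robust pair and the larger to efficiently distinguish any pair. Here $s$ is smaller and distinguishes an $n$-robust pair (stages are built from $\P_j$, $n$-robust), and $r$ is larger, so the lemma applies and gives an orientation $\vs$ of $s$ with $\vs\wedge\vr$ or $\vs\wedge\rv$ efficiently distinguishing $r$'s pair; this corner separation has order $\le|r|<n$ and is nested with $s$, so we can replace $r$ by it. Iterating this replacement over all earlier-stage separations that $r$ crosses — and checking the replacements terminate (order is bounded and the corner separations lie in $S_k$), and that they can be made canonically (take \emph{all} such corner separations, or argue the construction is canonical because Lemma~\ref{lem:nested2}'s output is determined by the data) — yields a modified, canonical, nested family. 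The main obstacle is precisely bookkeeping this replacement process so that (a) it terminates, (b) the result stays $\P$-relevant (property (ii)), (c) it still distinguishes everything (property (i)), and (d) the whole assignment $\P\mapsto T(\P)$ commutes with automorphisms (property (iii)); the cleanest route to (d) is to make every choice canonically — at each stage take the tree set guaranteed by Theorem~\ref{thm1}, which is already canonical, and make the corner-replacement step canonical by replacing a crossing separation by the \emph{set} of all corner separations that Lemma~\ref{lem:nested2} licenses.

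For part (iv), the regularity statement, I would argue as follows. Suppose every $P\in\P$ is regular. By (ii), every $s\in T$ efficiently distinguishes some pair $P,P'\in\P$; let $\vs\in P$. If $\sv$ were small then $\vs\ge\sv$, and then for \emph{any} separation $r$ distinguishing $P$ from $P'$ we would have — actually the point is simpler: a small separation $\sv\le\vs$ means $\vr\le\sv$ is trivial for every $\vr<\sv$, but more to the point, if $\sv$ is small then $P$ contains $\vs$ with $\sv$ small, contradicting the regularity of $P$ directly, \emph{unless} it is the opposite orientation $\sv$ that lies in $P$ and $\vs$ in $P'$ — but then $\vs$ small contradicts regularity of $P'$. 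Either way one of the two profiles the separation $s$ distinguishes would be irregular, contradiction; hence no $s\in T$ has a small orientation, so $T$ is regular. Finally, since every element of $T$ efficiently distinguishes a pair of profiles it is in particular $\P$-relevant hence nontrivial, and $T$ is nested by construction, so $T$ is a (regular) tree set. I expect parts (i), (ii), (iv) to be essentially immediate from the construction and from Theorem~\ref{thm1}(i),(ii), with (iii) the routine-but-delicate canonicity check and the cross-stage nestedness via Lemma~\ref{lem:nested2} being the genuine content.
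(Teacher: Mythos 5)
Your skeleton matches the paper's: induction on $k$, passing to the induced $k$-profiles $\P_k$, Theorem~\ref{thm1} at each stage, and Lemma~\ref{lem:nested2} to handle crossings between stages. But the one place where you defer the ``real work''~-- making the stage-$k$ output nested with the earlier stages~-- is exactly where your plan has a gap, and the paper resolves it differently. You propose to apply Theorem~\ref{thm1} first and then repair crossings afterwards, replacing each $r\in T_k$ that crosses some $s\in T_j$ ($j<k$) by a corner separation of $r$ and $s$. The trouble is that this replacement can destroy the internal nestedness of $T_k$ itself: by Lemma~\ref{lem:fish} the corner is nested with $s$ and with every separation nested with \emph{both} $r$ and $s$, but an element $t\in T_k$ that is nested with $r$ while crossing $s$ (i.e.\ another element awaiting repair) may well cross the corner. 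So after one round of replacements the set need no longer be nested, and your termination measure (the number of crossings with earlier stages) says nothing about crossings created inside stage~$k$. Your canonicity fix~-- adding \emph{all} corners that Lemma~\ref{lem:nested2} licenses~-- makes this worse, since distinct corners of $r$ with different $s$'s need not be nested with one another.

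The paper avoids the repair process entirely by building the nestedness requirement into the separation system \emph{before} invoking Theorem~\ref{thm1}: at stage $k$ it partitions the not-yet-distinguished profiles of $\P_k$ into classes $\P_Q$ according to the $(k-1)$-profile $Q$ they induce, and defines $S_Q$ to consist only of those $s\in S_k$ that distinguish some pair in $\P_Q$ not distinguished by any separation crossing \emph{fewer} elements of $T_{k-1}$. Lemmas~\ref{lem:nested2} and~\ref{lem:fish} then show that $S_Q$ is automatically nested with $T_{k-1}$ (a crossing element of $S_Q$ could be improved, contradicting the minimality in its definition), and Theorem~\ref{thm1} applied to $\P_Q\restricts S_Q$ returns a tree set nested both internally and with $T_{k-1}$, with canonicity for free. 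Note also that the partition into the $\P_Q$ is not optional bookkeeping: your scattering verification needs every relevant separation and every corner $\vr\lor\vs$ to have order exactly $k-1$, which only holds once you restrict to profiles inducing the same $Q$ (a lower-order corner would distinguish their induced $(k-1)$-profiles), and the partition creates the separate obligation~-- which the paper discharges using the separation in $T_{k-1}$ that distinguishes $Q$ from $Q'$~-- of showing that $T_Q$ and $T_{Q'}$ are nested with each other. Your argument for part~(iv) is fine and coincides with the paper's.
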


\begin{proof}
   For every $\P$ and $k>0$ let ${\P}_k$ denote the set of all $n$-robust%
   \COMMENT{}
   $k $-profiles induced by profiles in~${\P}$, where $n = \max\{\,\kappa(P,P')\mid P,P'\in\P\,\}+1$. As $\P$ is robust, distinct $P,P'\in\P$ induce distinct elements of~$\P_k$ for $k = \kappa(P,P')+1$. Hence any set $T\sub U$ distinguishing every $\P_k$ will also distinguish~$\P$.

We shall construct, simultaneously for all~$\P$,%
   \COMMENT{}
   nested sets $T_0\sub T_1\sub\dots$ of separations $T_k = T_k(\P)$ such that, for all $k\ge 1$:
   \begin{enumerate}[label=(\Roman*)]\itemsep0pt
      \item $T_k$ efficiently distinguishes~${\P}_k\,$;
      \item every $s \in  T_k \sm T_{k -1}$ efficiently distinguishes some pair of profiles in ${\P}_k$;
      \item for every automorphism $\alpha$ of $\vU$ we have $T_k({\P}^{\alpha}) = T_k({\P})^{\alpha}$.
   \end{enumerate}
Then, clearly, $T:=\bigcup_{k=1,2,\dots} T_k$ will satisfy (i)--(iii) and be nested,%
   \COMMENT{}
   as desired. Statement~(iv) will be an immediate consequence of~(ii), because small separations cannot distinguish regular profiles.%
   \COMMENT{}
  
We start our construction with $T_{0} = \emptyset$.%
   \COMMENT{}
   Now consider $k >0$, and assume that for all $0 < \ell < k$ and all~$\P$ we have constructed $T_{\ell} = T_{\ell}(\P)$ so as to satisfy (I)--(III).

If $k=1$, put $Q:=\emptyset$ and $\P_Q:=\P_1$, and let $S_Q$ be the set of all $\P_1$-relevant separations in~$S_1$. Clearly, $S_Q$~distinguishes $\P_Q$ efficiently.

If $k>1$, suppose some distinct $P,P^{\prime}\in {\P}_k$ are not yet 
distinguished by $T_{k -1}$. The $(k-1)$-profiles that $P$~and~$P^{\prime}$ induce are again $n$-robust, so they lie in~$\P_{k-1}$, and hence by~(I) cannot be distinct.%
   \COMMENT{}
   We denote the set of all profiles in 
${\P}_k$ that induce the same profile $Q \in {\P}_{k -1}$ by ${\P}_Q$.
As ${\P}_k$ is the union of all such~${\P}_Q$, it will suffice to find for every $Q \in {\P}_{k -1}$ a nested set $T_Q\sub U$ that
efficiently distinguishes~${\P}_Q$, and make sure that these 
$T_Q$ are are also nested with $T_{k -1}$ and with each other.

If $k>1$, consider any fixed $Q \in {\P}_{k -1}$. Let $S_Q$ be the set of all those separations $s\in S_k$ which distinguish some $\hat P,\hat P'\in\P_Q$ that are not distinguished by any separation in~$S_k$ that crosses fewer separations in~$T_{k -1}$ than $s$ does. Clearly, every separation in~$S_Q$ is $\P_Q$-relevant, and $S_Q$ distinguishes~${\P}_Q$ efficiently.%
   \COMMENT{}

Let us show that $S_Q$ is nested with~$T_{k -1}$.%
   \COMMENT{}
   Suppose there is a separation $s \in S_Q$ that crosses a separation $r \in T_{k -1}$. Then $k>1$, as $T_0 = \emptyset$. By the definition of~$S_Q$, there exist $\hat P,\hat P'\in\P_Q$ that are distinguished by~$s$ but by no separation in~$S_k$ that crosses fewer separations in~$T_{k -1}$ than $s$ does. By Lemma~\ref{lem:nested2},%
   \COMMENT{}
   $r$~and~$s$ have orientations $\vr$ and~$\vs$ such that $\vr \land \vs$ or $\vr \land \sv$ efficiently distinguishes $\hat P$ from~$\hat P'$. These corner separations of $r$ and~$s$ are not only, unlike~$s$, nested with~$r$ but also, by Lemma~\ref{lem:fish}, with every separation $t\in T_{k -1}$ that is nested with~$s$, because $t$ is also nested with~$r$  since both lie in~$T_{k -1}$. This contradiction to the choice of $\hat P$ and~$\hat P'$%
   \COMMENT{}
   completes our proof that $S_Q$ is nested with~$T_{k -1}$.

Our next aim is to find a nested subset $T_Q$ of $S_Q$ that still distinguishes all of~${\P}_Q$. As $S_Q$ distinguishes~${\P}_Q$, it suffices to find such a set $T_Q$ that distinguishes $\P_Q\restricts S_Q$.%
   \COMMENT{}
   This will exist by Theorem~\ref{thm1} if $S_Q$ scatters~$\P_Q\restricts S_Q$.

To prove this, consider two profiles $P,P^{\prime} \in \P_Q\restricts S_Q$ and two crossing separations $r,s \in S_Q$ 
with orientations $\vr,\vs \in P$ and $\rv, \sv \in P^{\prime}$. Let us show that both $\vr\land\vs$ and $\vr\lor\vs$, like $r$ and~$s$, have order~$k-1$. Neither of them can have smaller order than~$k-1$: if one of them did then it would, by (P) and consistency,%
   \COMMENT{}
   distinguish the profiles in $\P_Q$ that induce $P$ on~$S_Q$ from those that induce~$P'$, so two such profiles in~$\P_Q$%
   \COMMENT{}
   would induce distinct profiles in~$\P_{k-1}$, whereas in fact they all induce~$Q$. Thus, $|\vr\land\vs|, |\vr\lor\vs|\ge k-1$. Since $|\vr\land\vs| + |\vr\lor\vs| \le |r|+|s|$ by submodularity, and $|r|=|s|=k-1$ as $r,s\in S_Q$, we thus have $|\vr\land\vs| = |\vr\lor\vs| = k-1$ as claimed.

Thus, $\vr\lor\vs$ lies in~$\vS_k$, and it distinguishes $P$ from~$P'$. By Lemma~\ref{lem:fish} it is nested with every $t\in T_{k-1}$, because $r,s\in S_Q$ are nested with~$T_{k-1}$ (as shown earlier).%
   \COMMENT{}
   By definition of~$S_Q$, this means that $\vr\lor\vs$ lies in~$\vS_Q$. But then it also lies in~$P$, by $\vr,\vs\in P$ and~(P). This completes our proof that $S_Q$ scatters~${\P}_Q$.

Applying Theorem~\ref{thm1} to $\P_Q\restricts S_Q$ in $(\vS_Q, \leq\,,\!{}^*)$ for each~$Q\in {\P}_{k -1}$ (respectively, for $Q=\emptyset$ if $k=1$),
we obtain a family of sets $T_Q\sub S_Q$ of separations distinguishing~${\P}_Q$.%
   \COMMENT{}
    Each of these~$T_Q$ is nested, and nested with~$T_{k -1}$. In order to show that they are also nested with each other, consider any $s\in T_Q$ and $s'\in T_{Q^\prime}$ for distinct $Q,Q^\prime \in \P_{k -1}$. By (I) for $k-1$ there is a separation $r \in T_{k -1}$ with orientations $\vr \in Q$ and $\rv \in Q^\prime$. Since $T_Q$ is nested with~$T_{k -1}$, the separations $r$~and~$s$ have comparable orientations. So either $\vr$ or $\rv$ points towards~$s$; let us show that $\vr$ does.%
   \COMMENT{}

Since $s\in S_Q$ it is $\P_Q$-relevant,%
   \COMMENT{}
   so it has orientations $\vs \in P_1$ and $\sv \in P_2$ for some $P_1,P_2 \in {\P}_Q$. 
As $P_1$ and $P_2$ induce~$Q$, they both contain~$\vr$. Hence if $\rv$ points towards~$s$, then either $\rv < \vs$ or $\rv < \sv$,%
   \COMMENT{}
  contradicting the consistency of $P_1$ or~$P_2$, respectively. So $\vr$ points towards~$s$, as claimed.

Similarly, $\rv$~points towards~$s'$. We thus have $\vsdash < \vr < \vs$ for suitable orientations of $s$ and~$s'$. In particular, $s$ and~$s'$ are nested.

We have shown that the $T_Q$ are nested sets of separations that are also nested with each other and with~$T_{k-1}$. Hence, $T_k:= T_Q$ for $k=1$, and otherwise
   $$T_k \defgl T_{k -1} \cup\! \bigcup_{Q \in {\P}_{k -1}}\!T_Q$$
   \vskip-3pt\noindent\COMMENT{}%
is nested too, and by construction%
   \COMMENT{}
   it satisfies (I) and~(II).

In order to verify~(III), let $\alpha$ be an arbitrary automorphism of $(\vU,\le\,,\!{}^*,\vee,\wedge,|\ |)$. If $k=1$, then (III) holds by definition of~$T_1$, which we obtained from Theorem~\ref{thm1} as a canonical tree set $T\sub S_1$ distinguishing~$\P_1$. Now consider $k>1$. 
Note that $\P_\ell^\alpha = (\P_\ell)^\alpha$ for all~$\ell$,%
   \COMMENT{}
   by definition of~$\P_\ell$. Hence $S_{Q^\alpha} = (S_Q)^\alpha$ for every $Q\in\P_{k-1}$, by definition of~$S_Q$ for $\P$ and $S_{Q^\alpha}$ for~$\P^\alpha$, and (III) for~${k-1}$.%
   \COMMENT{}
   Therefore $\P^\alpha_{Q^\alpha}\restricts S_{Q^\alpha} = (\P_Q\restricts S_Q)^\alpha$, so Theorem~\ref{thm1} implies that $T(\P^\alpha_{Q^\alpha}\restricts S_{Q^\alpha}) = T(\P_Q\restricts S_Q)^\alpha$, for all $Q \in {\P}_{k -1}$. Hence, again using (III) for~$k-1$,\looseness=-1
   \begin{align*}
      T_k({\P}^\alpha) &= T_{k -1}({\P}^\alpha) \cup 
      \bigcup_{Q^\alpha \in {\P}^\alpha_{k -1}} T(\P^\alpha_{Q^\alpha}\restricts S_{Q^\alpha})\\
      & = T_{k -1}({\P})^\alpha \cup \bigcup_{Q \in {\P}_{k -1}} T(\P_Q\restricts S_Q)^\alpha\\
      &= T_k({\P})^\alpha,
   \end{align*}
completing the proof of (I)--(III) for~$k$.
\end{proof}

\goodbreak

Let us add a few remarks. First, we stated~(iii) for automorphisms rather than arbitrary isomorphism of universes just to keep things simple; it clearly generalizes as in Theorem~\ref{thm1} (whose more general form we needed in the proof of Theorem~\ref{thm2}).%
   \COMMENT{}

Since trivial separations cannot distinguish profiles, one might expect that, by~(ii), $T$~will always be a tree set. However this is not the case. For example, $\vT$~might contain a small separation~$\vr$ that distinguishes two $k$-profiles in~$\P$. This $\vr$ will not be trivial in~$S_k$. But it may be trivial in some~$S_n$ with $n>k$. Then no $n$-profile in~$\vU$ will contain~$\rv$, but $r$ may still be needed for~$T_k$.
Under mild additional assumptions,%
   \footnote{For example, it suffices to assume that, when $\vs$ is small, $\vr\le\vs$ implies $|r|\le|s|$. This holds for separations of graphs and matroids.}%
   \COMMENT{}
   however, one can replace the premise in~(iv) with the weaker premise that no $P\in\P$ is special.

In applications, the robustness of $\P$ will often be a consequence of the fact that the profiles it contains are themselves robust, and often they will also be regular. Here is a leaner version of Theorem~\ref{thm2} for robust regular profiles:

\begin{cor}\label{cor2}
   Let $\vU = (\vU,\le\,,\!{}^*,\vee,\wedge,|\ |)$ be a submodular universe of separations. For every%
   \COMMENT{}
   set $\P$ of pairwise distinguishable%
   \COMMENT{}
   robust regular profiles in~$\vU$ there is a regular tree set $T=T({\P}) \subseteq \vU$ of separations such that:\vskip-\medskipamount\vskip0pt
   \begin{enumerate}[label = \rm(\roman*)]\itemsep0pt
      \item every two profiles in~${\P}$ are efficiently distinguished by some separation~in~$T$;
      \item every separation in~$T$ efficiently distinguishes a pair of profiles in ${\P}$;
      \item for every automorphism $\alpha$ of $\vU$%
   \COMMENT{}
    we have $T({\P}^{\alpha}) = T({\P})^{\alpha}$.\qed
   \end{enumerate}
\end{cor}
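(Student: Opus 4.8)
The plan is to obtain Corollary~\ref{cor2} directly from Theorem~\ref{thm2}: almost all of it is a verbatim restatement, so the only work is to check that a set of pairwise distinguishable robust profiles qualifies as a \emph{robust set} in the sense required by Theorem~\ref{thm2}, and then to read off assertion~(iv) in the regular case.

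First I would verify that $\P$ itself is robust. Put $n\defgl\max\{\,\kappa(P,P')\mid P,P'\in\P\,\}+1$; by definition $\P$ is robust once we show it is $n$-robust, i.e.\ that any two distinct $P,P'\in\P$ are $n$-robustly distinguishable and satisfy $\kappa(P,P')<n$. The inequality $\kappa(P,P')<n$ is immediate from the choice of~$n$. For $n$-robust distinguishability, recall that $P$ and $P'$ are distinguishable by hypothesis, and that for $k\defgl\kappa(P,P')+1$ the induced $k$-profiles $P\cap\vS_k$ and $P'\cap\vS_k$ are distinct, since some separation of order $\kappa(P,P')<k$ distinguishes them. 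So it remains to observe that each of these induced $k$-profiles is $n$-robust. Here the one genuine point is that condition~(R) passes to induced profiles: $P$ is robust, hence $n$-robust, so~(R) holds for every $\vr\in P$ and every $s\in S_n$; since $P\cap\vS_k\sub P$, the same instances of~(R) hold with $P$ replaced by $P\cap\vS_k$ (if $\rv\land\vs$ and $\rv\land\sv$ are not both in $P$, they are a fortiori not both in the subset), so $P\cap\vS_k$ is $n$-robust, and likewise $P'\cap\vS_k$. Thus $\P$ is robust; this is the fact already asserted in the text following the definition of a robust set of profiles.

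Then I would apply Theorem~\ref{thm2} to~$\P$, obtaining a nested set $T=T(\P)\sub U$ satisfying (i)--(iv) of that theorem. Assertions (i), (ii) and~(iii) of the corollary are literally (i), (ii) and~(iii) of Theorem~\ref{thm2}, and canonicity is inherited as stated there. Finally, since every profile in~$\P$ is regular by hypothesis, Theorem~\ref{thm2}(iv) yields that $T$ is moreover a regular tree set, which is the only remaining part of the statement.

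I do not anticipate a real obstacle: the reduction of set-robustness to individual robustness plus pairwise distinguishability in the second paragraph is the only non-trivial step, and it rests entirely on the elementary observation that~(R), being a statement of the form ``certain separations are not both in~$P$'', is monotone under passing to subsets of~$P$, so it survives restriction to the induced $k$-profiles. Everything else is unpacking definitions.
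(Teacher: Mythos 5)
Your proposal is correct and is exactly the argument the paper intends: the corollary is stated with an inline \qed, relying on the earlier remark that a set of pairwise distinguishable, individually robust profiles is a robust set (which you verify via the monotonicity of~(R) under passing to induced $k$-profiles), after which Theorem~\ref{thm2} together with its assertion~(iv) gives everything. No gaps.
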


\noindent
But note that we cannot simply omit all robustness requirements in Theorem~\ref{thm2} and Corollary~\ref{cor2}; see~\cite{confing} for a counterexample.

As a common basis for the proof of our Theorems~\ref{generalk} and~\ref{matroids}, let us note a further simplification of Theorem~\ref{thm2}. Rather than starting with any fixed set~$\P$ of distinguishable profiles, we find a tree set that distinguishes them all:%
   \COMMENT{}

\begin{cor}\label{cor3}
Let $\vU = (\vU,\le\,,\!{}^*,\vee,\wedge,|\ |)$ be a submodular universe of separations. There is a canonical%
   \COMMENT{}
   regular tree set $T\sub \vU$ that efficiently distinguishes all the distinguishable robust and regular profiles in~$\vU$.
\end{cor}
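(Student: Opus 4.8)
\medskip\noindent\textbf{Proof idea.} The plan is to obtain this directly from Theorem~\ref{thm2}, in the packaged form of Corollary~\ref{cor2}, by feeding it the largest possible input: take $\P$ to be the set of \emph{all} robust regular profiles in~$\vU$, and let $T := T(\P)$ be the tree set that Corollary~\ref{cor2} returns.

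First I would check that this $\P$ meets the hypothesis of Corollary~\ref{cor2}, i.e.\ that it is a set of pairwise distinguishable robust regular profiles. Robustness and regularity of each member hold by the choice of~$\P$. Pairwise distinguishability is automatic: if $\vU$ had a degenerate separation then, applying~(P) to it, $\vU$ would have no profiles and $\P=\emptyset$, so $T=\emptyset$ would do; otherwise any two distinct profiles orient some non-degenerate separation differently, and that separation distinguishes them. So the phrase ``distinguishable robust and regular profiles'' in the statement denotes exactly this set~$\P$. The only bookkeeping point is that a set of individually robust, pairwise distinguishable profiles does qualify as a \emph{robust set} in the technical sense required by Theorem~\ref{thm2}, and this is precisely what is observed in the text immediately after that definition.

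Corollary~\ref{cor2} then yields a regular tree set $T=T(\P)\sub\vU$ with: (i)~every two profiles in~$\P$ efficiently distinguished by some separation in~$T$; (ii)~every separation in~$T$ efficiently distinguishing a pair of profiles in~$\P$; and (iii)~$T(\P^\alpha)=T(\P)^\alpha$ for every automorphism~$\alpha$ of~$\vU$. Clauses~(i) and~(ii) are exactly the assertion that $T$ efficiently distinguishes all the distinguishable robust and regular profiles in~$\vU$, so it remains only to see that $T$ itself is \emph{canonical}. For this, note that any automorphism~$\alpha$ of~$\vU$ preserves the ordering, the involution, $\lor$, $\land$ and the order function, hence maps robust regular profiles bijectively to robust regular profiles; thus $\P^\alpha=\P$, and therefore $T^\alpha=T(\P)^\alpha=T(\P^\alpha)=T(\P)=T$. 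Running the same argument with the isomorphism version of~(iii) noted in the remark after Theorem~\ref{thm2} shows that the assignment $\vU\mapsto T$ commutes with isomorphisms of submodular universes, which is the intended meaning of ``canonical'' here. I expect no real obstacle: all the work sits in Theorems~\ref{thm1} and~\ref{thm2}, and the only things to get right in this corollary are the choice of~$\P$, the observation that distinguishability is free, and the step from ``canonical assignment $\P\mapsto T(\P)$'' to ``canonical set~$T$'' via the $\alpha$-invariance of~$\P$.
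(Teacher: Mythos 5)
There is a genuine gap, and it sits exactly where you wave it away: the claim that pairwise distinguishability of your $\P$ is ``automatic''. Profiles \emph{in}~$\vU$ are $k$-profiles for varying~$k$, i.e.\ orientations of different separation systems $\vS_k\sub\vU$. If $P$ is a $k$-profile and $P'$ is a $k'$-profile with $k'>k$ that induces~$P$ (so $P=P'\cap\vS_k\subsetneq P'$), then $P$ and~$P'$ are distinct but \emph{not} distinguishable: any $s$ oriented by both is oriented the same way, so no separation has orientations $\vs\in P$ and $\sv\in P'$. Your argument ``any two distinct profiles orient some non-degenerate separation differently'' is valid only for two profiles of the \emph{same}~$S$. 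Consequently the set of all robust regular profiles in~$\vU$ is in general not pairwise distinguishable, is not a robust \emph{set} in the technical sense (that definition presupposes $\kappa(P,P')$ is defined, i.e.\ that $P,P'$ are distinguishable), and so does not satisfy the hypothesis of Corollary~\ref{cor2}. The word ``distinguishable'' in the statement of Corollary~\ref{cor3} is not a vacuous qualifier; it restricts which \emph{pairs} the tree set must separate.

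The paper's proof repairs exactly this. It takes $\P$ to be only the \emph{inclusion-maximal} robust regular profiles; these are pairwise distinguishable (if two maximal profiles agreed on every separation they both orient, the one of smaller order would be contained in the other, contradicting maximality), so Corollary~\ref{cor2} applies. It then needs a further step that your proposal omits: given two distinguishable profiles $P_1,P_2$ in the full collection, extend each to a maximal $P_i'\in\P$; a separation distinguishing the $P_i$ also distinguishes the $P_i'$, so $\kappa(P_1',P_2')\le\kappa(P_1,P_2)$, hence the separation in~$T$ that efficiently distinguishes $P_1'$ from~$P_2'$ has order at most $\kappa(P_1,P_2)$ and is oriented by both $P_1$ and~$P_2$ --- so it distinguishes them, efficiently. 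Your canonicity argument (invariance of $\P$ under automorphisms plus clause~(iii) of Corollary~\ref{cor2}) is fine once $\P$ is replaced by the maximal profiles, since automorphisms preserve inclusion-maximality.
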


\begin{proof}
Let $\P'$ be the set of robust and regular profiles in~$\vU$, and let $\P$ be the set of its inclusion-maximal elements. Note that the profiles in~$\P$ are pairwise distinguishable.%
   \COMMENT{}
   By Corollary~\ref{cor2}, there is a canonical tree set $T\sub\vU$ that efficiently distinguishes~$\P$. We have to show that it efficiently distinguishes every pair $P_1,P_2$ of distinguishable profiles in~$\P'$.

Each $P_i$ is a $k_i$-profile for some~$k_i$. By definition of~$\P$ it extends to some~${P'_i\in\P}$, which is a $k'_i$-profiles for some $k'_i\ge k_i$. (Recall that all the profiles we are considering are profiles `in'~$\vU$, and hence $k$-profiles for some~$k$.) Since $T$ distinguishes the $P'_i$ efficiently, and any separation distinguishing the $P_i$ will also distinguish their extensions~$P'_i$, our assumption that the $P_i$ are distinguishable means that $T$ distinguishes the $P'_i$ by a separation of order at most~$\kappa(P_1,P_2)$.%
   \COMMENT{}
   This separation, therefore, has orientations in both $P_1$ and~$P_2$, and thus distinguishes these efficiently too.
\end{proof}

\subsection{Minimal distinguishing tree sets, non-canonical}\label{subsec:min}

As shown by Example~\ref{nonmin} we cannot, without losing canonicity, strengthen (ii) in Theorem~\ref{thm2} to saying that $T$ is minimal given~(i). But if we are prepared to make do without canonicity, we can strengthen (ii) even more: we can ensure that $T$ is minimal not only with~(i) but more generally with the property that it distinguishes~$\P$, efficiently or not. (In fact, we shall prove that every $T$ that is minimal with~(i) is minimal even as a $\P$-distinguishing set of separations; see Lemma~\ref{fun} below.)

Robertson and Seymour~\cite{GMX}%
   \COMMENT{}
   proved that every graph has a (non-canonical) \td\ that distinguishes any given set $\Theta$ of distinguishable tangles, in the sense that any distinct $\theta,\theta'\in\Theta$ disagree on some separation associated with the \td. Moreover, this decomposition has only $|\Theta|$~parts. It follows%
   \COMMENT{}
   that every tangle $\theta\in\Theta$ `lives in' one of these parts~-- the unique part towards which $\theta$ orients all the separations associated with the \td\ that it orients~-- and every part is home to a tangle from~$\Theta$.%
   \COMMENT{}

Although in our abstract separation system we have no vertices, and our tree set $T$ has no `parts', we can generalize this to a non-canonical version of our theorem by representing those parts as consistent orientations of~$T$. For every $P\in\P$, the subset $P\cap\vT$ of~$\vT$ is consistent, and hence extends to a consistent orientation of all of~$T$.%
   \footnote{We need here that  no element of $P\cap\vT$ is co-trivial in~$T$, which we shall ensure by requiring $\P$ to be regular. See~\cite[Lemma 4.1]{AbstractSepSys} for a formal proof that such extensions exist.}
   This extension is not in general unique, but for our minimal $T$ it will be.%
   \COMMENT{}
   Moreover, every consistent orientation of~$T$ can be obtained from some $P\in\P$ in this way. In this sense, every profile in~$\P$ will `live in' a unique consistent orientation of~$T$, and every consistent orientation of $T$ will be home to some $P\in \P$.

\begin{thm}\label{thm2min} {\rm (Non-canonical tangle-tree theorem for separation universes)}\\
   Let $\vU = (\vU,\le\,,\!{}^*,\vee,\wedge,|\ |)$ be a submodular universe of separations. For every robust set $\P$ of regular profiles in~$\vU$ there is a regular%
   \COMMENT{}
   tree set $T'\subseteq \vU$ of separations such that:
   \begin{enumerate}[label = \rm(\roman*)]\itemsep0pt
      \item every two profiles in~${\P}\!$ are efficiently distinguished by some separation~in~$T'$;
      \item no proper subset of~$T'$ distinguishes~$\P$;
      \item for every $P\in\P$ the set $P\cap\vTdash$ extends to a unique consistent orientation\penalty-200\ $\PT$ of~$T'$. This map $P\mapsto \PT$ from $\P$ to the set $\O$ of consistent orientations of~$T'$ is bijective.
   \end{enumerate}
\end{thm}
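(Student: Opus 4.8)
The plan is to obtain $T'$ from the canonical tree set $T$ of Theorem~\ref{thm2} by deleting separations one at a time, as long as what remains still distinguishes $\P$. Since $T$ is finite (the universe need not be, but only finitely many orders $k$ produce $\P_k\neq\emptyset$, and for each such $k$ the relevant separation system $\vS_Q$ is finite), this process terminates in a subset $T'\sub T$ that still distinguishes $\P$ but is minimal with this property; that gives (ii). For (i), note that every pair $P,P'\in\P$ is distinguished by some $s\in T'$, and since $s\in T$ we know by Theorem~\ref{thm2}(ii) that $s$ efficiently distinguishes \emph{some} pair of profiles in $\P$; one then argues, as in the proof of Corollary~\ref{cor3}, that because $s$ has order $\kappa$ of the pair it efficiently distinguishes, and because it also distinguishes $P$ from $P'$, its order is at most $\kappa(P,P')$, hence it distinguishes $P$ from $P'$ efficiently as well. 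Also $T'$ is again a regular tree set: it is nested and regular as a subset of the nested regular set $T$, and it has no trivial or degenerate elements because a trivial or degenerate separation cannot distinguish regular profiles, so such a separation would already have been removed (or was never in $T$, by Theorem~\ref{thm2}(ii) again).

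The substance of the theorem is (iii), and this is where the argument needs the minimality from (ii). First I would show the extension $P\mapsto \PT$ is well defined: for $P\in\P$ the set $P\cap\vTdash$ is consistent (being a subset of the profile $P$), and since $\P$ is regular no element of $P\cap\vTdash$ is co-trivial in $T'$ (its inverse being small would make $P$ irregular); hence by \cite[Lemma 4.1]{AbstractSepSys} it extends to some consistent orientation of $T'$. The key point is \emph{uniqueness}, and this is precisely where minimality enters. Suppose $P\cap\vTdash$ extended to two distinct consistent orientations $O_1,O_2$ of $T'$; pick $s\in T'$ oriented as $\vs\in O_1$ and $\sv\in O_2$. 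Then $s\notin P\cap\vTdash$ modulo orientation, i.e.\ $P$ does not orient $s$, which in particular means $s$ does not distinguish $P$ from itself — fine, but more is needed. What I expect to be the clean way: I would show that if $T'$ is minimal with the property of distinguishing $\P$, then \emph{every} $s\in T'$ is $\P$-relevant, and moreover the two consistent orientations of $T'\sm\{s\}$ that $\vs$ and $\sv$ force are realised by profiles whose only distinguishing separation within $T'$ is $s$; this rigidity forces any consistent orientation of $T'$ to be determined on all of $T'\sm\{s\}$ once its $\vs$-coordinate is known, and by induction over $T'$ (removing relevant separations one at a time, each removal forced by minimality) the orientation is determined by which profile it comes from. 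This should be packaged as the statement that the consistent orientations of $T'$ are exactly the orientations of the form $\{\vs\in\vTdash : \vs\le\text{(some local maximum)}\}$, i.e.\ $T'$ viewed as a tree set has its consistent orientations in bijection with its ``nodes'', and each node is hit by exactly one profile.

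For the bijectivity claim in (iii) — that $P\mapsto\PT$ is onto the set $\O$ of all consistent orientations of $T'$, and injective — injectivity follows from (i): if $P_1\neq P_2$ then some $s\in T'$ distinguishes them efficiently, so $\vs\in P_1\cap\vTdash\sub P_{1,T'}$ and $\sv\in P_2\cap\vTdash\sub P_{2,T'}$, whence $P_{1,T'}\neq P_{2,T'}$. Surjectivity is the part that genuinely uses minimality: given a consistent orientation $O$ of $T'$, I want a profile $P\in\P$ with $P\cap\vTdash\sub O$. Here I would argue by contradiction: if no profile induces $O$, then for every $P\in\P$ there is a separation $s_P\in T'$ with $P$'s orientation of $s_P$ opposite to $O$'s; one then checks that $T'\sm\{s\}$ still distinguishes $\P$ for a suitably chosen $s$ (roughly, an $s\in T'$ that is ``extremal'' in $O$, i.e.\ maximal in $O$, so that the orientation of $T'\sm\{s\}$ obtained by flipping $s$ is still consistent — and every pair of profiles distinguished by $s$ is then distinguished by this extremality to sit on the same side, contradicting that $s$ distinguished them). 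This last reduction — turning ``$O$ is not realised'' into ``some $s\in T'$ is redundant'' — is the main obstacle, and I expect it to require a careful choice of which separation of $T'$ to delete, exploiting that $T'$ is a finite tree set (so it has maximal elements) together with the consistency of $O$ and the profile property~(P); this is essentially the abstract-separation-system analogue of the fact in \cite{GMX} that every part of the Robertson--Seymour decomposition is home to a tangle. I would isolate the purely order-theoretic core of this as a separate lemma (the ``Lemma~\ref{fun}'' alluded to in Subsection~\ref{subsec:min}) stating that a tree set $T'$ minimal with the property of distinguishing a set of regular profiles $\P$ has its consistent orientations in canonical bijection with $\P$.
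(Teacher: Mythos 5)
Your overall architecture (pass to a minimal subset of the canonical tree set $T$ from Theorem~\ref{thm2}, then prove unique extension and surjectivity) matches the paper's, but there is a genuine gap in how you set up the minimisation, and it propagates. You delete separations from $T$ as long as the remainder still \emph{distinguishes} $\P$, and then try to recover the efficiency in (i) afterwards. That recovery fails: if $s\in T'$ distinguishes $P$ from $P'$, then by the definition of $\kappa$ its order is at \emph{least} $\kappa(P,P')$, not at most~-- your inequality is backwards~-- and the analogy with Corollary~\ref{cor3} does not apply, because there the efficiently distinguished pair consists of \emph{extensions} of the pair one cares about, whereas here it is an unrelated pair. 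Indeed, your greedy deletion can discard the one low-order separation that efficiently distinguishes $P,P'$ when some higher-order separation still distinguishes them (and every other pair the deleted separation handled). The paper does the opposite: it chooses $T'\sub T$ minimal subject to~(i), so that efficiency is automatic, and the nontrivial direction is then~(ii)~-- that this $T'$ is minimal even as a mere distinguishing set. That is exactly the content of Lemma~\ref{fun}, a path-exchange argument in the decomposition tree with edge labels $\ell(s)=|s|$, which simultaneously yields surjectivity of $P\mapsto\PT$ (each $s\in T'$ is the unique $T'$-edge on some $U\!$-path, forcing that path to have length one, so that every node of the tree is of the form~$\PT$). You gesture at such a lemma, but you state it with the wrong minimality hypothesis, so the order function~-- which is what makes the lemma true~-- never enters your version.

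A second gap is the uniqueness of~$\PT$. Your ``rigidity'' claim~-- that a consistent orientation of $T'$ is determined on all of $T'\sm\{s\}$ once its orientation of $s$ is known~-- is false for tree sets in general: consistent orientations correspond to nodes of the decomposition tree, and fixing one edge's orientation only fixes which component of the tree that node lies in. The paper's uniqueness proof does not use the minimality of $T'$ at all; it reaches back into the construction of $T$ in Theorem~\ref{thm2}: if $P$ fails to orient some $s\in T'$, then $s$ was added at stage $|s|$ to distinguish two profiles $P',P''\in\P$ that induce a common lower-order profile $Q$; one checks that every $\vr\in P\cap\vTdash$ points towards $s$ and hence lies in $Q\cap\vTdash$, so $T'$ fails to distinguish $P$ from $P'$ and from $P''$~-- a contradiction. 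Some argument of this kind (or a genuinely new one actually exploiting minimality) is needed; the induction you sketch does not close.
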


\noindent
Note that, by~(i), assertion (ii)~implies that%
   \COMMENT{}
    every separation in~$T'$ distinguishes two profiles in ${\P}$ efficiently, as earlier in Theorem~\ref{thm2}.

Also, the implicit assumption in the theorem that the profiles in $\P$ must be pairwise distinguishable (because we require $\P$ to be robust) is not a real restriction. Indeed, the tree set~$T'$ returned by Theorem~\ref{thm2min} will distinguish {\em all\/} the distinguishable robust and regular profiles in~$\vU$: just take as~$\P$ the set of all the maximal such profiles, and argue as in the proof of Corollary~\ref{cor3}.

\medbreak

For the proof of Theorem~\ref{thm2min} we need a fun little lemma about separating edge sets in trees, which may also be of interest in its own right. The intended application is that choosing $T'$ in Theorem~\ref{thm2} minimal with respect to~(i), ignoring canonicity, will automatically make it satisfy (ii) of Theorem~\ref{thm2min}. Recall that a {\em $U\!$-path\/} in a graph $G$ with $U\sub V(G)$ is a path that meets~$U$ in exactly its distinct ends.

\begin{Lem}\label{fun}
Let $G$ be a tree with an edge labelling $\ell\colon E(G)\to\mathbb N$. Let $U\sub V(G)$ and $F\sub E(G)$ be such that every $U\!$-path $P\sub G$ contains an edge $e\in F$ such that $\ell(e) = \min\{\,\ell(e')\mid e'\in P\,\}$ and $F$ is minimal with this property, given~$U$. Then for every $e\in F$ there is a  $U\!$-path in~$G$ whose only edge in~$F$ is~$e$.
\end{Lem}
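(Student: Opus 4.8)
\textbf{Proof plan for Lemma~\ref{fun}.}
The plan is to argue by contradiction: suppose there is some $e_0\in F$ such that every $U$-path whose minimum-label edges include $e_0$ also contains another edge of $F$. I would then try to show that $F\sm\{e_0\}$ still has the required property, contradicting the minimality of~$F$. So let $P$ be an arbitrary $U$-path; I must find an edge $e\in F\sm\{e_0\}$ with $\ell(e)=\min_{e'\in P}\ell(e')=:m$. By hypothesis $P$ contains some edge of $F$ realising this minimum; if that edge is not $e_0$ we are done, so assume $e_0\in P$ and $\ell(e_0)=m$, and assume moreover (else there is nothing to prove) that $e_0$ is the \emph{only} edge of $F$ on $P$ attaining the minimum~$m$ — in fact, after shrinking, I may assume $e_0$ is the only edge of $F$ on $P$ at all below or at level~$m$ that we are worried about; the goal is to produce a \emph{different} $U$-path $P'$ that reuses a chunk of $P$ but detours around $e_0$, and then combine information about $P$ and $P'$.

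The key construction is a rerouting argument in the tree $G$. Write $P=x\dots y$ with ends $x,y\in U$, and let $e_0=uv$ split $G$ into components $G_u\ni u,x$ and $G_v\ni v,y$ (relabelling so $x$ is on the $u$-side). Since $e_0$ is not the only realiser forced by some path — here I would invoke the hypothesis applied to $e_0$ itself, which says \emph{some} $U$-path $P_0$ through $e_0$ at its minimum has all of its minimum edges... wait — more carefully: the contradiction hypothesis is that \emph{every} $U$-path whose min-edge set contains $e_0$ meets $F$ again. I would instead directly build, from the two ends $x,y$ of $P$, shortest paths inside $G_u$ and $G_v$ from $x$ and $y$ to $U\cap G_u$ and $U\cap G_v$ respectively; concatenating the $x$-side piece of $P$ with a path staying in $G_u$ that returns to a vertex of $U$ gives a $U$-path $P_1\sub G_u$ not using $e_0$, and similarly a $P_2\sub G_v$. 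Every edge of $P_1$ lies on $P$ or is `new'; I would like the new edges to have label $>m$, but that need not hold, which is the crux of the difficulty. The clean way around this: choose $P$ and the offending $e_0$ so that $P$ is \emph{edge-minimal} among $U$-paths realising their minimum at a single $F$-edge equal to $e_0$; then argue that $P_1$ (say) is a strictly shorter $U$-path, so by minimality it contains an $F$-edge $e_1\ne e_0$ at its own minimum label, and track how $\ell(e_1)$ compares with $m$.

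Concretely, I expect the argument to run: let $P$ be a counterexample-witnessing $U$-path chosen with $|E(P)|$ minimum such that $e_0$ is the unique edge of $F\cap E(P)$ with label $m=\min_{e'\in P}\ell(e')$. Cut $P$ at $e_0$ into $P=P'ue_0vP''$ where $P'$ runs from $x\in U$ to $u$ and $P''$ from $v$ to $y\in U$. Because $G$ is a tree, $P'$ followed by the unique path in $G_u$ from $u$ back to the nearest vertex of $U$ (which exists — at worst it is $x$, giving the trivial backtrack, so one must take care to pick $x$ itself as that vertex, i.e. $P_1:=P'$ reversed is already an $x$--$x$ degenerate path, which is not a $U$-path) — so instead I form $P_1$ as the $U$-path consisting of $P'$ together with, from $u$, the path to some vertex of $U$ lying in $G_u$ and \emph{not} on $P'$, if one exists; if no such vertex exists then $U\cap V(G_u)=\{x\}$, and symmetrically we may assume $U\cap V(G_v)$ could be just $\{y\}$, in which case $P$ is forced and I get the contradiction from the hypothesis applied to $P=P_0$ directly. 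Otherwise $P_1$ is a genuine $U$-path, shorter in edges than $P$ on one side but possibly longer on the other; the fix is to instead keep \emph{both} detours small by a further minimality choice, or — cleanly — to induct on $|V(G)|$, contracting $e_0$: in $G/e_0$ with $F\sm\{e_0\}$ and the image of $U$, one checks the hypothesis of the lemma is inherited (every $U$-path in $G/e_0$ lifts to a $U$-path in $G$ avoiding $e_0$ or using it trivially), apply the inductive conclusion, and lift back.

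\textbf{Main obstacle.} The real difficulty, which I would flag, is controlling the labels on the \emph{detour} edges: when we reroute a $U$-path around $e_0$ inside a component of $G-e_0$, the new edges may have smaller labels than $m$, so the minimum of the new path can drop below $m$ and the $F$-edge realising it may legitimately be $e_0$ again on a \emph{different} sub-path — circular. The cleanest resolution is almost certainly the induction on $|V(G)|$ via contraction of $e_0$ sketched above (or, equivalently, deleting $e_0$ and working in each component separately with the induced $U$), reducing to the case where $F$ is already minimal and $e_0$ absent, then transporting a witnessing $U$-path back across $e_0$; the one point needing care is verifying that $F\sm\{e_0\}$ genuinely fails the minimality of $F$, i.e. that it still hits every $U$-path at a minimum edge, which is exactly the contradiction hypothesis unwound. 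I would present the induction-on-contraction version as the primary proof and relegate the direct rerouting to a remark.
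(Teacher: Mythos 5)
Your plan never closes the gap between what the minimality of $F$ actually gives you and what the lemma asserts, and this is precisely where the paper's one real idea lives. Minimality of $F$ yields, for each $e_0\in F$, a $U$-path $P$ on which $e_0$ is the \emph{unique $F$-edge attaining the minimum label} of $P$ (the paper calls such an $e_0$ ``essential'' on $P$); it does \emph{not} yield a path on which $e_0$ is the only $F$-edge, because $P$ may carry further $F$-edges of strictly larger label. Your opening move --- trying to show that $F\setminus\{e_0\}$ still has the covering property --- is therefore doomed from the start: such a path $P$ exists by minimality and witnesses that $F\setminus\{e_0\}$ fails, so no contradiction with minimality is available, and you are left having to upgrade ``unique minimum-label $F$-edge'' to ``unique $F$-edge''. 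You correctly identify the resulting obstacle (controlling labels on the detour edges when rerouting around $e_0$), but neither of your proposed fixes resolves it: contracting $e_0$ changes the minimum label of every path through $e_0$, so the hypothesis is not inherited by $G/e_0$, and in any case an inductive conclusion in $G/e_0$ can only speak about edges of $F\setminus\{e_0\}$, never about $e_0$ itself.

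The missing idea is an extremal choice on the \emph{labels}: among all ``bad'' edges $e\in F$ (those that are never the sole $F$-edge of any $U$-path), pick one with $\ell(e)$ maximum, and let $P$ be a $U$-path on which it is essential. Every other $F$-edge $f$ on $P$ then satisfies $\ell(f)>\ell(e)$, hence is \emph{good} and comes with its own witnessing $U$-path whose only $F$-edge is $f$. Choosing such edges $f$ and $f'$ closest to $e$ on either side of it along $P$, the relevant segments of $P$ together with these witnessing paths are $F$-free, and splicing them in the tree produces a $U$-path whose only $F$-edge is $e$ --- the desired contradiction. Without this maximisation over labels there is nothing to stop the auxiliary paths you splice in from reintroducing $F$-edges at or below the minimum, which is exactly the circularity you flagged but did not escape.
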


\begin{proof}
Let us call an edge $e\in G$ {\em essential\/} on a path $P\sub G$ if $e$ is the unique edge of~$P$ in~$F$ with $\ell(e) = \min\{\,\ell(e')\mid e'\in P\,\}$. The minimality of~$F$ assumed in the lemma means that every edge in $F$ is essential on some $U\!$-path. If the lemma fails, there is an edge $e\in F$ that is not the only $F$-edge on any $U\!$-path. Let such an edge $e$ be chosen with $\ell(e)$ maximum, and let $P = uGv$ be a $U\!$-path on which $e$ is essential.

   \begin{figure}[htpb]
\centering%\vskip-12pt\vskip0pt
        \includegraphics{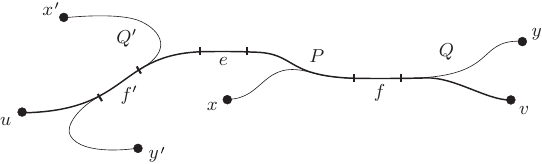}
        \caption{Paths in the proof of Lemma~\ref{fun}}
   %\label{funpicture}
   \end{figure}

By definition, $e$~is not the only edge of~$P$ in~$F$; choose another, $f$~say, as close to $e$ as possible. Assume that $f$ lies on the segment $ePv$ of~$P$ strictly%
   \COMMENT{}
   between $e$ and~$v$. Then $\ell(f) > \ell(e)$ since $e$ is essential on~$P$, so by the choice of~$e$ there exists a $U\!$-path $Q=xGy$ whose only edge in~$F$ is~$f$. Let $x$ be the end of~$Q$ for which $f\notin eGx$. By the choice of~$f$,%
   \COMMENT{}
   this path $eGx\sub P\cup Q$ has no edge in~$F$.

If $uPe$ has no edge in~$F$, then $e$ is the unique $F$-edge on the $U\!$-path $uGx$, contradicting the choice of~$e$. So $uPe$ contains an edge $f'\in F$. Choose $f'$ as close to~$e$ as possible, and repeat the earlier argument with $f'$ in the place of~$f$ to find a $U\!$-path $Q'=x'Gy'$ whose only edge in~$F$ is~$f'$, with $f'\notin eGx'$ say.%
   \COMMENT{}
   By the choice of~$f'$, this path $eGx'\sub P\cup Q'$ has no edge in~$F$. Hence $e$ is the only $F$-edge on the $U\!$-path $xGx'$, contradicting the choice of~$e$.
\end{proof}

\noindent{\bf Proof of Theorem~\ref{thm2min}.}
Let $T$ be the regular tree set provided for~$\P$ by Theorem~\ref{thm2}. Then $T'=T$ satisfies (i) of our theorem. Now choose $T'\sub T$ minimal with this property.
\begin{txteq}\label{PT}
 For every $P\in\P$, the set $P\cap\vTdash$ extends uniquely to a consistent orientation~$\PT$ of~$T'$.
\end{txteq}
 
To prove this, note first that $P\cap\vTdash$ is consistent, because~$P$ is. By~\cite[Lemma~4.1]{AbstractSepSys} and our assumption that all $P\in\P$ are regular, $P\cap\vTdash$ extends to a consistent orientation $\PT$ of all of~$T'$.

For a proof that $\PT$ is unique, let $\ell := \max\{\, |r| : \vr\in P\cap\vTdash\}$. Since $P$ is an $m$-profile, for some~$m>\ell$, it orients {\em all\/} the separations of order~$\le\ell$ in~$T$. If $P\cap\vTdash$ extends to distinct consistent orientations of~$T'$ there is some $s\in T'$ which $P$ does not orient, which thus has order $k := |s| > \ell$. In fact, $s$~is not oriented by the down-closure $\dcl(P\cap\vTdash)$ of $P\cap\vTdash$ in~$\vTdash$,%
   \COMMENT{}
   because this is contained in all consistent orientations of~$T'$ extending~$P\cap\vTdash$, by definition of consistency.%
   \COMMENT{}

When $s$ was added to~$T$ in its construction, the reason was that $s$ distinguishes some profiles $P',P''\in\P$ inducing the same profile $Q$ of~$S_k$, with $\vs\in P'$ and $\sv\in P''$ say.%
   \COMMENT{}
   Notice that every $\vr\in P\cap\vTdash$ points to~$s$: as $T'$ is nested, $\vr$ is either smaller or greater than some orientation of~$s$, but $s$ has no orientation in $\dcl(P\cap\vTdash)\supe\dcl(\vr)$.%
   \COMMENT{}

Hence for every $\vr\in P\cap\vTdash$, which lies in $\vT_k$ by definition of $\ell$ and~$k$, we have either $\vr < \vs$ and thus $\vr\in P'\cap\vT_k\cap \vTdash = Q\cap\vTdash$ by the consistency of~$P'$ and $\vs\in P'$, or else $\vr < \sv$ and $\vr\in P''\cap\vT_k\cap \vTdash = Q\cap\vTdash$ by the consistency of~$P''$ and $\sv\in P''$. So on the separations in~$T'$ that it orients, $P$~agrees with~$Q$, and hence with both $P'$ and~$P''$.%
   \COMMENT{}
   Thus, $T'$~does not distinguish $P$ from either $P'$ or~$P''$. But $T'$ does distinguish all distinct profiles in~$\P$. So $P$ is not distinct from either $P'$ or~$P''$, i.e., $P'=P=P''$ contradicting the choice of $P'$ and~$P''$.%
   \COMMENT{}

This completes our proof of~\eqref{PT}. Thus, $P\mapsto\PT$ is a well-defined map from $\P$ to the set $\O$ of consistent orientations of~$T'$. Let us use this to prove assertion (ii) of our theorem.\looseness=-1

As shown in~\cite{TreeSets}, there is a tree~$G$ with edge set~$T'$ whose natural ordering on~$\vT'$, as defined in Section~\ref{subsec:nested}, coincides with our given ordering on~$\vTdash$. As with every finite tree, the consistent orientations of its edge set with respect to this natural ordering correspond to its nodes: an orientation of $E(G)$ is consistent if and only if it orients all of $E(G)$ towards some fixed node of~$G$~\cite{CDHH13CanonicalParts, TreeSets}. Since the consistent orientations of $E(G)$ are those of~$T'$,%
   \COMMENT{}
   we may thus think of $\O$ as the node set of~$G$.

Let us apply Lemma~\ref{fun} to $G=(\O,T')$ with $F=T'$ and $U = {\{\,\PT\mid P\in\P\}}$ and $\ell(s):= |s|$ for all $s\in T'$.%
   \COMMENT{}
   Note that a separation $s\in T'$ distinguishes two profiles $P,P'\in\P$ if and only if $s$ lies on the unique $\PT$--$\PT'$ path in~$G$: it is then, and only then, that orienting it towards the node~$\PT$ differs from orienting it towards the node~$\PT'$, and as mentioned earlier, an edge $\vs\in\vTdash$ is oriented towards a node $\PT$ of~$G$ if and only if $\vs$ lies in~$\PT$.%
   \COMMENT{}

Let us show that $F=T'$ satisfies the premise of the lemma. By definition of~$T'$, every two profiles $P,P'\in\P$ are distinguished by some $s\in T'$ with $\ell(s) = \kappa(P,P')$, but as soon as we delete an element $s$ of~$T'$ it loses this property.%
   \COMMENT{}
   Then there are $P,P'\in\P$ such that $s$ lies on the $\PT$--$\PT'$ path in~$G$ but every other edge $r$ on that path has a label $\ell(r) = |r| > \kappa(P,P') = |s| = \ell(s)$. After deleting $s$ from~$F$ no edge of mimimum label on that path will lie in~$F$. Hence, $F$ is as required in the lemma.

The lemma asserts that, therefore, each $s\in T'$ is the only edge in $T'$ that distinguishes some two profiles in~$\P$ depending on~$s$. This is statement~(ii) of our theorem. Moreover, it implies that $U = V(G)$: if $s$ is the only edge in $F=T'=E(G)$ on some $U$-path, then this path has length~1, and hence the ends of~$s$ lie in~$U$. Thus, our map $P\to\PT$ from $\P$ to~$\O$ is surjective. Since it is also injective, by~(i), our proof is complete.\qed%
   \COMMENT{}

\subsection{Symmetric profiles}\label{subsec:awful}

This section is for those readers that have become interested in abstract profiles and wonder whether the concept might still be improved. Readers primarily interested in their applications~-- to graphs, matroids and elsewhere~-- may skip ahead to Section~\ref{sec:applications} without loss.\looseness=-1

Our motivation for introducing the notion of a profile was to find a concept, as general as possible, that captures the idea of identifying highly connected parts in a discrete structure by orienting its low-order separations consistently towards~it. While initially these orientations might have been induced by some concrete highly connected structure such as a $k$-block, and receive their consistency from there, the idea was that they might ultimately be thought of as a highly connected substructure in their own right, thus allowing us to treat these in more general contexts such as abstract separation systems.

The key to this, then, lies in finding an abstract notion of consistency to make this work. Our formal definition of consistency, that two separations should not point away from each other, is just a minimum requirement one would naturally make. Conditions (P) and~(R) go a little further but are still fairly general; recall that tangles satisfy both. As shown in~\cite{confing}, there is little hope of weakening (R) and retaining a tangle-tree theorem. But maybe we can weaken (P) a little, while still keeping it narrow enough that the `weak profiles' it then defines can still be distinguished in a tree-like way?

There is also the question of just how natural is the notion of a profile. Condition~(P) seems very natural as a requirement of consistency: if $\vr$ and $\vs$ both point to some substructure~$X$ then so should $\vt := \vr\lor\vs$ if $t$ is oriented at all, provided the universe in which $\vr\lor\vs$ is taken is dense enough%
   \COMMENT{}
   that there is `no room for~$X$' between $\vr$ and $\vs$ on the one hand and $\rv\land\sv = \tv$ on the other (cf.\ Example~\ref{ex:resolution}). 
   And surely, then, every $\vt\le\vr\lor\vs$ should also lie in such a profile~$P$, as soon as~$t$~-- rather than $\vr\lor\vs$~-- is oriented at all, even if $\vr\lor\vs$ is not. Our current definition ensures this (by consistency) only if $\vr\lor\vs$ itself is oriented too, and hence lies in~$P$.

Let us call a consistent orientation $P$ of a separation system $(\vS,\le\,,\!{}^*)$ in some universe $\vU = (\vU,\le\,,\!{}^*,\vee,\wedge)$ a {\em strong profile\/} if it satisfies this strengthening of~(P):\looseness=-1
 $$\text{For all $\vr,\vs \in P$ and $\vt\le\vr\lor\vs$ the separation $\tv$ is not in $P$.}\eqno\rm (P^+)$$

Surprisingly, perhaps, if $\vU$ is a distributive as a lattice%
   \COMMENT{}
   and $\vS$ is {\em submodular\/} in the sense that, whenever $\vr,\vs\in\vS$, either $\vr\lor\vs\in\vS$ or $\vr\land\vs\in\vS$~\cite{AbstractSepSys}, then all its profiles are strong:

\begin{thm}\label{P3} {\rm\cite[Theorem~1]{EberenzMaster}}
If $\vU$ is distributive and $\vS$ is submodular, then an orientation of $S$ satisfies {\rm (P$^+$)} if and only if it is consistent and satisfies~{\rm (P)}.
\end{thm}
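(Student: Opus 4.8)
The plan is to prove the two implications separately: the forward one is routine, and the backward one is where distributivity of $\vU$ and submodularity of $\vS$ do the work.

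For \emph{(P$^+$) $\Rightarrow$ consistency and (P)}: taking $\vt:=\vr\lor\vs$ in {\rm (P$^+$)} yields {\rm (P)} at once, since then $\tv=(\vr\lor\vs)^*=\rv\land\sv$. And if $P$ failed to be consistent, witnessed by $\rv,\vs\in P$ with $\vr<\vs$ for distinct $r,s$, then applying {\rm (P$^+$)} with \emph{both} of its separations taken to be $\vs\in P$ and with $\vt:=\vr\le\vs=\vs\lor\vs$ would give $\rv=\tv\notin P$, contradicting $\rv\in P$. So both halves come essentially for free.

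For the converse, assume $P$ is consistent and satisfies {\rm (P)}; the goal is {\rm (P$^+$)}. First I would record the \emph{downward closure} of consistent orientations: if $\vx\in P$ and $\vy\le\vx$ with $y\in S$, then $\vy\in P$ --- for otherwise $\yv\in P$ together with $\vy<\vx$ and $\vx\in P$ violates consistency. (One also notes, from {\rm (P)} applied to a degenerate separation, that $P$ contains no degenerate elements.) Now suppose {\rm (P$^+$)} fails: there are $\vr,\vs\in P$ and a separation $t\in S$ with $\vt\le\vr\lor\vs$ and $\tv\in P$. The key reduction is: \emph{it suffices to produce $\vx,\vy\in P$ with $\vx\lor\vy\in\vS$ and $\vt\le\vx\lor\vy$.} Indeed, {\rm (P)} then gives $(\vx\lor\vy)^*\notin P$, whereas $(\vx\lor\vy)^*\le\tv$ (because $\vt\le\vx\lor\vy$) together with $(\vx\lor\vy)^*\in\vS$ and downward closure gives $(\vx\lor\vy)^*\in P$ --- a contradiction. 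If $\vr\lor\vs\in\vS$ we are done immediately with $\vx:=\vr$, $\vy:=\vs$. Otherwise submodularity of $\vS$ forces $\vr\land\vs\in\vS$, hence $\vr\land\vs\in P$ by downward closure, and here distributivity enters: from $\vt\le\vr\lor\vs$ we get $\vt=\vt\land(\vr\lor\vs)=(\vt\land\vr)\lor(\vt\land\vs)$, so the corner separations $\vt\land\vr\le\vr$ and $\vt\land\vs\le\vs$ are the natural candidates for $\vx,\vy$: if both lie in $\vS$, then both lie in $P$ by downward closure, their join is $\vt\in\vS$, and the reduction applies (in fact {\rm (P)} applied directly to $\vt\land\vr$ and $\vt\land\vs$ already yields $\tv\notin P$). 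To see $\vt\land\vr\in\vS$ I would run submodularity of $\vS$ on $\{\vt,\vr\}$: if $\vt\land\vr\in\vS$ we are done, and if instead $\vt\lor\vr\in\vS$ then, since $(\vt\lor\vr)\lor\vs=\vr\lor\vs\notin\vS$, submodularity of $\vS$ on $\{\vt\lor\vr,\vs\}$ gives $(\vt\lor\vr)\land\vs\in\vS$, which by distributivity equals $(\vt\land\vs)\lor(\vr\land\vs)$; combining this with $\vr\land\vs\in\vS$ one extracts, after a short further case analysis, a pair $\vx,\vy\in P$ with $\vt\le\vx\lor\vy\in\vS$ (for instance $\vx:=\vr$, $\vy:=\vt\land\vs$ once $\vt\land\vs$ and $\vr\lor(\vt\land\vs)$ have been placed in $\vS$, using $\vt=(\vt\land\vr)\lor(\vt\land\vs)\le\vr\lor(\vt\land\vs)$). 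The symmetric computation handles $\vt\land\vs$.

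The step I expect to be the main obstacle is exactly this last one: controlling which of the many corner, meet and join separations of $\vt,\vr,\vs$ actually lie in $\vS$. Submodularity of $\vS$ promises only that \emph{one} member of each conjugate pair $\{\vx\lor\vy,\vx\land\vy\}$ lies in $\vS$, and distributivity of $\vU$ is precisely the tool that lets the ``lucky'' members of several such pairs recombine --- via identities like $(\vt\lor\vr)\land\vs=(\vt\land\vs)\lor(\vr\land\vs)$ and $(\vt\lor\vr)\land(\vt\lor\vs)=\vt\lor(\vr\land\vs)$ --- into the separations $\vx,\vy$ one needs; without distributivity there is no such control. The genuinely awkward case is when \emph{neither} $\vt\land\vr$ nor $\vt\land\vs$ lies in $\vS$, forcing $\vt\lor\vr,\vt\lor\vs\in\vS$ and hence $\vt\lor(\vr\land\vs)=(\vt\lor\vr)\land(\vt\lor\vs)\in\vS$, where the lattice bookkeeping has to be pushed a step further; this, together with the routine but necessary care about small and trivial separations at the boundary, is where the real content of the argument sits.
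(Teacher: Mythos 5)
Your forward direction is correct, and the overall strategy for the converse (downward closure of consistent orientations, corner separations, submodularity of $\vS$ to decide which corners exist, distributivity to recombine them) is the right kind of argument. But note first that the paper offers no proof of this theorem -- it is quoted from [EberenzMaster] -- and, more importantly, your converse has a genuine gap right at its foundation: the ``downward closure'' step is false precisely when $\vy$ and $\vx$ are the two orientations of the \emph{same} separation, i.e.\ when $\vx\in P$ has a small inverse. Consistency only constrains \emph{distinct} $r,s$, so from $\xv\le\vx\in P$ you cannot conclude $\xv\in P$. This is not a removable technicality: whenever $P$ contains some $\vs$ with $\sv$ small, taking $\vr:=\vs$ and $\vt:=\sv\le\vs=\vs\lor\vs$ in {\rm(P$^+$)} already violates {\rm(P$^+$)}, so {\rm(P$^+$)} forces $P$ to be regular, while consistency and {\rm(P)} do not. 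Concretely, for $V=\{a,b\}$, $\vU$ the (distributive) universe of all separations of $V$ and $\vS=\vU$ minus the degenerate element $(V,V)$ (which is submodular), the orientation $P=\{(\emptyset,V),(V,\{a\}),(\{b\},V),(\{b\},\{a\})\}$ is consistent and satisfies {\rm(P)} but not {\rm(P$^+$)}. So the converse can only be proved under a regularity hypothesis (which also repairs downward closure and the $\vx\lor\vy=\tv$ edge case of your reduction); any complete argument must invoke it explicitly, and yours nowhere does.

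Even granting regularity, the crux of the converse is not actually established. Your plan is to manufacture $\vx,\vy\in P$ with $\vt\le\vx\lor\vy\in\vS$, but submodularity of $\vS$ only ever hands you one member of each pair $\{\vx\lor\vy,\vx\land\vy\}$, and the cases do not visibly close. For instance, suppose $\vr\lor\vs\notin\vS$ and $\vt\land\vr\in\vS$ but $\vt\land\vs\notin\vS$: your candidate $\vy:=\vt\land\vs$ is unavailable, the natural substitute is to join $\vt\land\vr\in P$ with $\vs\in P$, but submodularity may return $\vt\land\vr\land\vs\in\vS$ instead of $(\vt\land\vr)\lor\vs\in\vS$, and then the further corners it yields (such as $(\vt\lor\vs)^*$ or $(\vt\land\vr)\lor(\vr\land\vs)$) feed back into joins that are again not guaranteed to lie in $\vS$. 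The ``short further case analysis'' you defer to is exactly where the content of the theorem sits, and as sketched it is not clear it terminates; a correct proof needs an additional organising idea (for example an extremal choice of the counterexample $(\vr,\vs,\vt)$, say with $\vr\lor\vs$ minimal, so that replacing $\vs$ by a corner such as $(\vt\lor\vr)\land\vs$ strictly decreases it) rather than unstructured corner chasing.
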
%
   \COMMENT{}

\noindent
We shall apply Theorem~\ref{P3} in Section~\ref{subsec:matroids} to show that, with one simple exception, all profiles in a matroid are in fact tangles.%
   \COMMENT{}

\medbreak

So strengthening profiles in this way yields little new~-- and anyway, our aim was to weaken~(P), not strengthen it. But we can use the same idea to do that too, and in a way that also will make the notion of a profile more symmetric.

Condition~(P$^+$) says that $P$ contains no triple $(\vrone,\vrtwo,\vrthree)$ such that $\rvthree\le\vrone\lor\vrtwo$. To arrive at a weaker notion, let us ban a triple $(\vrone,\vrtwo,\vrthree)$ only if it satisfies this for every permutation. More precicely, let us say that $\vrone,\vrtwo,\vrthree\in\vU$ form a {\em bad triple\/} if
 $$ \rvk\le \vri\lor\vrj\quad\text{whenever}\quad \{i,j,k\} = \{1,2,3\}$$
 (Figure~\ref{fig:badtriple}), and call a consistent orientation of $S\sub U$ a {\em weak $S$-profile\/} if it contains no bad triple.%
   \footnote{One can, of course, construct separation systems with profiles that are not weak profiles in this sense.%
   \COMMENT{}
   However, there are no natural ones: by Theorem~\ref{P3}, profiles of submodular separation systems in distributive universes~-- which covers all natural examples~-- are also weak profiles.}%
   \COMMENT{}

   \begin{figure}[htpb]
\centering\vskip-6pt\vskip0pt
        \includegraphics{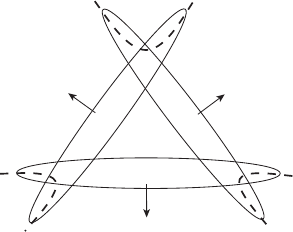}
        \caption{A bad triple of separations}
   \label{fig:badtriple}\vskip-3pt\vskip0pt
   \end{figure}

However, Eberenz~\cite{EberenzMaster} showed that weak profiles can no longer be separated in a tree-like way, as is our aim in this paper. Indeed, we have the following example:

\begin{Bsp} \label{exmpl:awful}
    Let $S$ consists of two crossing separations $r, s$ and their four corner separations. Let all these six separations have the same order. Let $O\sub\vS$ be the set the four corner separations oriented `towards the centre', that is, the set of separations of the form $\vr\land\vs$, one for every choice of orientations $\vr$ of~$r$ and $\vs$ of~$s$. Each of the four possible orientations $\vr$ and~$\vs$ of $r$ and~$s$ extends $O$ to an orientation $P=P_{\!\vr\!\!,\!\vs}$ of~$S$. If~$\vS$, together with a greatest element~$1$ and a least element~0, is the entire universe in which $\lor$ and~$\land$ are defined, then each of these $P$ will contain a bad triple of the form $(\vr,\vs,\rv\land\sv)$, so none of them is a weak profile.%
   \COMMENT{}

However, let us choose a graph $G$ so that $S$ becomes its set of proper 3-separations, and let $\lor$ and~$\land$ be defined as usual in the universe of all separations of~$G$. Then the four orientations $P$ of $S$ defined above are weak 3-profiles. To see this, notice that if $\vec x$ is a corner separation and $\vec y$ is any of the other separations in~$P$, then $\vx\lor\vy$ has order at least~4 unless $\vec x\le\vec y$, and $r$ has no orientation $\vr\le\vx\lor\vy$ (and likewise for~$s$).%
   \COMMENT{}
   Here, $\vx\lor\vy\notin\vS$, but allowing this, while still banning $\rv\in\vS$ from~$P$ if $\vr\le\vx\lor\vy$ and $\vx,\vy\in P$, was the whole point of introducing bad triples and weak profiles.\looseness=-1

Now we cannot distinguish these four weak profiles by a nested set of separations. Indeed, $r$~and~$s$ are the only separations that can distinguish any of them, we cannot have both $r$ and~$s$ in a nested set, but one of them will not distinguish the four weak profiles pairwise.%
   \COMMENT{}
   \end{Bsp}

\section{Applications to graphs and matroids}\label{sec:applications}

In this section we apply the results of Section~\ref{secTT} to graphs and matroids, concluding with
the proofs of the theorems stated in the introduction. We use the terms `block', `profile' and `tangle' for $k$-blocks, $k$-profiles and $k$-tangles with any~$k$. By a profile {\em in\/} a graph or matroid we mean a profile in the universe of all its separations.

\subsection{Canonical tangle-tree theorems in graphs}\label{subsec:graphs}

Let $G=(V,E)$ be a finite graph, and let $\vU = (\vU,\le\,,\!{}^*,\vee,\wedge,|\ |)$ be the universe of all its oriented separations~$(A,B)$ with the usual submodular order function of $|(A,B)| := |A,B| := |A\cap B|$. We think of $\{A,B\}$ as the corresponding unoriented separation, which officially in our terminology is the set~$\{(A,B),(B,A)\}$.

Given a $k$-block $b$ in~$G$, let us write
$$P_k (b)\defgl \{\, (A,B)  \in \vS_k \mid b \subseteq B\,\}$$
for the orientation of $S_k$ it induces. In the introduction, we called this the `profile of~$b$'. Our later definition of abstract profiles bears this out:

\begin{Lem}\label{blockprofiles}
   $P_k (b)$ is a regular $k$-profile.\qed
\end{Lem}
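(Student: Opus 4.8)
The plan is to verify directly that $P_k(b) = \{(A,B) \in \vS_k \mid b \subseteq B\}$ is (a) an orientation of $S_k$, (b) consistent, (c) satisfies (P), and (d) regular. The first three of these were already asserted informally in the introduction; the point of the lemma is to pin them down against the abstract definitions of Section~\ref{subsec:profiles}.

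First I would check that $P_k(b)$ orients $S_k$. Given a $\lek$-separation $\{A,B\}$ of $G$, the discussion in the introduction shows that $b$ cannot meet both $A\sm B$ and $B\sm A$: two such vertices would be separated by the $(<k)$-set $A\cap B$, contradicting the defining property of a $k$-block. Hence $b\subseteq A$ or $b\subseteq B$. If $b\subseteq A\cap B$ then both orientations lie in $P_k(b)$, but this cannot happen: a $k$-block has at least $k$ vertices while $|A\cap B|<k$. So exactly one of $(A,B),(B,A)$ lies in $P_k(b)$, as required; there are no degenerate separations of a set to worry about.

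Next, consistency. Suppose $(B,A) = \sv$ and $(C,D) = \vr$ lie in $P_k(b)$ with $\vr < \sv$ strictly, i.e.\ $C\subsetneq B$ and $D\supsetneq A$ (using the ordering $(C,D)\le(B,A)\Leftrightarrow C\subseteq B, D\supseteq A$). From $(B,A)\in P_k(b)$ we get $b\subseteq A$; from $(C,D)\in P_k(b)$ we get $b\subseteq D$. But then $b\subseteq A\cup C \subseteq D\cup C = V$ gives nothing directly, so instead I argue: $b\subseteq A$ and $A\subseteq D$ would force the orientation of $r=\{C,D\}$ pointing to $D$, which is $\vr$ — consistent so far — so I need to locate the actual obstruction. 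The cleanest route is: $\vr<\sv$ means $C\subseteq B$ and $A\subseteq D$; combined with $b\subseteq A\subseteq D$ this is fine, so I should instead recall that the relevant bad configuration for consistency is $\rv,\vs\in P$ with $\vr<\vs$; translating, with $\vr=(C,D)$, $\rv=(D,C)$, we would need $(D,C)\in P_k(b)$ i.e.\ $b\subseteq C$, and $(E,F)=\vs\in P_k(b)$ with $(C,D)<(E,F)$, i.e.\ $C\subseteq E$, so $b\subseteq C\subseteq E$ forces $\vs=(F,E)\in P_k(b)$, not $(E,F)$ — contradiction. So consistency holds. For (P): if $\vr=(A,B)$ and $\vs=(C,D)$ lie in $P_k(b)$, then $b\subseteq B$ and $b\subseteq D$, so $b\subseteq B\cap D$, hence $(A\cup C, B\cap D) = \vr\lor\vs$ has $b$ on its far side; but its inverse $(\vr\lor\vs)^* = \rv\land\sv = (B\cap D, A\cup C)$ would require $b\subseteq A\cup C$, which together with $b\subseteq B\cap D$ gives $b\subseteq(A\cup C)\cap(B\cap D)\subseteq B\cap D$; this does not immediately contradict anything, so the right statement is simply that $(\vr\lor\vs)^*\notin P_k(b)$ because $(\vr\lor\vs)^*$ is the orientation pointing to $A\cup C$, and $b\subseteq B\cap D$ means $b\not\subseteq A\cup C$ unless $b\subseteq(A\cup C)\cap(B\cap D)$; since $|(A\cup C)\cap(B\cap D)| = |\vr\lor\vs|$ could be $<k$ this needs the $k$-block size bound again — $b$ has $\ge k$ vertices, so $b$ is not contained in any set of size $<k$, hence not in $A\cup C$ when $\vr\lor\vs\in\vS_k$. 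Finally regularity: $P_k(b)$ contains no $(A,B)$ whose inverse $(B,A)$ is small, i.e.\ with $A=V$; for if $A=V$ then $(A,B)=(V,B)\in P_k(b)$ forces $b\subseteq B$, but also $(B,A)=(B,V)$ would be the small one and $(V,B)$ being in $P_k(b)$ is fine — what regularity forbids is $(A,B)\in P_k(b)$ with $(B,A)$ small, i.e.\ $B=V$; then $b\subseteq B=V$ trivially, so I must instead observe $(A,V)\in P_k(b)$ would mean the small separation $(V,A)$'s inverse is in $P_k(b)$, and $(A,V)\in\vS_k$ forces $|A\cap V|=|A|<k$, again contradicting $|b|\ge k$ since $b\subseteq A$. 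So $P_k(b)$ is regular.

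The main obstacle, such as it is, is bookkeeping: every part of the argument reduces to one of two facts — that a $<k$-separator cannot split $b$, and that $b$ has $\ge k$ vertices so lies in no set of size $<k$ — but one must carefully match the abstract definitions (the ordering convention $(A,B)\le(C,D)\Leftrightarrow A\subseteq C, B\supseteq D$, the meaning of "small", the precise form of consistency and of (P) via $(\vr\lor\vs)^* = \rv\land\sv$) to the concrete separations of $G$ to avoid sign errors. No genuine difficulty arises; the lemma is essentially a definition-check, which is presumably why the paper states it with a QED dash and no written proof.

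\begin{proof}
$P_k(b)$ is an orientation of $S_k$: for any $\{A,B\}\in S_k$, no two vertices of~$b$ can lie in $A\sm B$ and $B\sm A$ respectively, as they would be separated by the fewer than $k$ vertices in $A\cap B$; hence $b\sub A$ or $b\sub B$. Since $|b|\ge k>|A\cap B|$, not both inclusions hold, so exactly one of $(A,B),(B,A)$ lies in $P_k(b)$. There are no degenerate separations.

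Consistency: suppose $\rv,\vs\in P_k(b)$ with $\vr<\vs$ for distinct $r,s$, say $\vr=(C,D)$, so $\rv=(D,C)$ and $b\sub C$, and $\vs=(E,F)$ with $(C,D)<(E,F)$, hence $C\sub E$ and thus $b\sub C\sub E$. But $\vs\in P_k(b)$ means $b\sub F$, whence $b\sub E\cap F$; as $|E\cap F|<k\le|b|$ this is impossible.

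Condition~(P): if $\vr=(A,B),\vs=(C,D)\in P_k(b)$ then $b\sub B\cap D$. Suppose $\vr\lor\vs=(A\cup C,B\cap D)\in\vS_k$. Its inverse $(\vr\lor\vs)^*=(B\cap D,A\cup C)$ would, if in $P_k(b)$, give $b\sub A\cup C$, hence $b\sub(A\cup C)\cap(B\cap D)$; but $|(A\cup C)\cap(B\cap D)|=|\vr\lor\vs|<k\le|b|$, a contradiction. So $(\vr\lor\vs)^*\notin P_k(b)$.

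Regularity: suppose $(A,B)\in P_k(b)$ with its inverse $(B,A)$ small, i.e.\ $B=V$. Then from $(A,B)\in\vS_k$ and $b\sub A$ we get $|b|\le|A|=|A\cap V|=|A\cap B|<k$, contradicting $|b|\ge k$. Hence $P_k(b)$ is a regular $k$-profile.
\end{proof}
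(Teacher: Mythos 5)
Your proof is correct in substance and takes the only sensible route: the paper states this lemma with a bare \qed, treating it as exactly the definition check you carry out, and your verification of the orientation property, consistency, and (P) all reduce correctly to the two facts that a $(<k)$-separator cannot split $b$ and that $|b|\ge k$ prevents $b$ from lying in any set of fewer than $k$ vertices.

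One slip in the final regularity paragraph: for set separations the small ones are those of the form $(X,V)$, so the inverse $(B,A)$ of $(A,B)$ is small precisely when $A=V$, not when $B=V$; and $(A,B)\in P_k(b)$ gives $b\sub B$, not $b\sub A$. With the letters corrected the argument is the intended one: if $(V,B)\in P_k(b)$ then $b\sub B$ and $|B|=|V\cap B|<k$, contradicting $|b|\ge k$. Swapping $A$ and $B$ throughout that paragraph repairs it; nothing else is affected.
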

   \COMMENT{}

\noindent
We shall call the profiles of the form $P_k(b)$ the ($k$-){\em block profiles\/} in~$G$.

Notice that a separation $(A,B)\in S_k$ distinguishes two $k$-blocks, in the original sense that one lies in~$A$ and the other in~$B$, if and only if $(A,B)$ distinguishes the $k$-profiles they induce. From now on, we shall use the term `distinguish' formally only for profiles, and say that a separation {\em distinguishes\/} two blocks, say, if it distinguishes their profiles. In this way, we can now also speak of separations distinguishing blocks from other profiles such as tangles.

Similarly, we shall call a block~$b$ \emph{robust} if the profiles%
   \footnote{Note that $b$ can be a $k$-block for several~$k$. If the corresponding sets $S_k$ are distinct, then so are the $k$-profiles $P_k(b)$ that $b$ induces, though clearly $P_\ell (b)\sub P_k (b)$ for $\ell < k$.}
   it induces are robust, and similarly for sets of blocks.%
   \COMMENT{}

A {\em tangle of order~$k$} in~$G$, or {\em $k$-tangle\/}, is an orientation~$\theta$ of~$S_k$ such that
 $$G[A_1] \cup G[A_2] \cup G[A_3]\ne G\eqno\rm(T)$$
 for all triples $(A_1,B_1), (A_2,B_2), (A_3,B_3)\in\theta$. These $(A_i,B_i)$ need not be distinct, so tangles are consistent. In fact, the following is immediate from the definitions:

\begin{Lem}\label{lem:K-robust}
   Every tangle in~$G$ is a robust regular profile.\qed
\end{Lem}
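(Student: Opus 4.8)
The plan is to verify directly that a $k$-tangle $\theta$ of $G$ meets every clause in the definition of a robust regular profile: consistency, (P), regularity, and $n$-robustness for all $n$. In each case the strategy is the same — if the clause failed, I would exhibit three separations lying in $\theta$ whose ``left sides'' induce subgraphs that together cover all of $G$, contradicting (T). The one elementary fact I would invoke repeatedly is that for any separation $\{A,B\}$ of $G$ every edge has both ends in $A$ or both ends in $B$, so $G=G[A]\cup G[B]$; used for two separations $\{A,B\}$ and $\{C,D\}$ at once, a two-line case distinction shows that unions such as $G[A]\cup G[B\cap C]\cup G[B\cap D]$ are already all of $G$.

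For consistency I would take $\vr<\vs$ with $\rv,\vs\in\theta$, say $\vr=(A,B)\le\vs=(C,D)$ so that $D\subseteq B$, note that $G[C]\cup G[D]=G$ and $G[D]\subseteq G[B]$ give $G[C]\cup G[B]=G$, and apply (T) to the triple $(C,D),(B,A),(B,A)\in\theta$. For (P) I would assume $\vr=(A,B),\vs=(C,D)\in\theta$ and $(\vr\lor\vs)^*=(B\cap D,\,A\cup C)\in\theta$, observe that every edge lies in $G[A]$, or lies in $G[B]$ and hence in $G[C]$ or in $G[B\cap D]$, so that $G[A]\cup G[C]\cup G[B\cap D]=G$, and apply (T) to $(A,B),(C,D),(B\cap D,A\cup C)$. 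Regularity is immediate: (T) applied to the constant triple $(A,B),(A,B),(A,B)$ shows $G[A]\ne G$ for every $(A,B)\in\theta$, hence $A\ne V$, and $(A,B)$ having a small inverse would force $A=V$.

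For robustness I would fix $\vr=(A,B)\in\theta$ and an arbitrary separation $s$ with $\vs=(C,D)$, and suppose the two corner separations $\rv\land\vs=(B\cap C,\,A\cup D)$ and $\rv\land\sv=(B\cap D,\,A\cup C)$ both have order $<|r|$; since $|r|<k$ they then both lie in $\vS_k$ and so are oriented by $\theta$, and if both lay in $\theta$ then, as every edge lies in $G[A]$ or in $G[B]$ and hence in $G[B\cap C]$ or $G[B\cap D]$, the triple $(A,B),(B\cap C,A\cup D),(B\cap D,A\cup C)\in\theta$ would again contradict (T). This is exactly condition (R), and since it holds for every such $s$ — in particular for $s\in S_n$ — and every $n$, $\theta$ is robust; together with the earlier paragraphs this gives the lemma. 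There is no genuine obstacle here — the statement is, as the paper says, immediate from the definitions — but of the four clauses robustness is the one carrying the most bookkeeping, and the only point worth a moment's care is checking that all the corner separations produced above have order $<|r|<k$, so that they are among the separations that $\theta$, being an orientation of $S_k$ only, actually sees.
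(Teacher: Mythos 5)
Your verification is correct, and it is exactly the argument the paper has in mind when it marks the lemma as immediate from the definitions: each of consistency, (P), regularity and (R) is refuted, if it fails, by a triple of separations in $\theta$ (with repetitions allowed) whose left sides cover $G$, using the fact that every edge of $G$ lies in $G[A]$ or in $G[B]$ for each separation $\{A,B\}$ involved. The one point needing care — that the corner separations in the robustness step have order $<|r|<k$ and are therefore actually oriented by $\theta$ — you have addressed.
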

   \COMMENT{}

In order to apply Theorem~\ref{thm2} to prove Theorem~3, we need to show that every tree set of separations in a graph $G$ is induced by a suitable \td\ of~$G$. The following lemma, which was proved for regular tree sets in~\cite{confing} and for arbitrary tree sets in~\cite{TreeSets},%
   \footnote{Part~(ii) of Lemma~\ref{lem:canon} was not stated in the sources cited, but the proofs given  establish~it.}
    achieves this:

\begin{Lem}\emph{}\label{lem:canon}
   Let $T$ be a tree set%
   \COMMENT{}
   of separations of~$G$. Then $G$ has a \td\ $(\mathcal{T},\mathcal{V})$ such that:\vskip-\medskipamount\vskip0pt
    \begin{enumerate}[label = {\rm(\roman*)}]\itemsep0pt
      \item $T$ is precisely the set of separations of $G$ associated with the edges of~$\mathcal T\!$;
      \item if $T$ is invariant under the automorphisms of $G$, then $(\mathcal{T},\mathcal{V})$ is 
      canonical.%
   \footnote{Canonical \td s of graphs are defined in the Introduction.}
   \end{enumerate}
\end{Lem}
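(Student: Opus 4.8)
I would prove this in two stages: first reduce (i) to the tree-set/\td\ correspondence of~\cite{TreeSets}, recalling enough of its construction that the new assertion~(ii) becomes transparent.

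\smallskip

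For~(i): since $T$ is a tree set, the structure theory of tree sets~\cite{TreeSets} (and, for regular~$T$, \cite{confing}) provides a tree $\mathcal{T}$ together with an isomorphism between its natural tree set $\vec E(\mathcal{T})$ of oriented edges (Section~\ref{subsec:nested}) and the ordered set~$\vT$; using it, regard each edge $e$ of~$\mathcal{T}$ as an element $\{\ve,\ev\}$ of~$T$. For a node $t$ of~$\mathcal{T}$, orienting every edge of~$\mathcal{T}$ towards~$t$ yields a consistent orientation $O_t\sub\vT$ of~$\vT$, and $t\mapsto O_t$ is a bijection from the nodes of~$\mathcal{T}$ onto the consistent orientations of~$\vT$~\cite{CDHH13CanonicalParts,TreeSets}. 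Put $V_t\defgl\bigcap\{\,B\mid(A,B)\in O_t\,\}$, the empty intersection (occurring only if $T=\emptyset$) being read as~$V(G)$. I would then check that $\mathcal V\defgl(V_t)_{t\in\mathcal{T}}$ makes $(\mathcal{T},\mathcal{V})$ a \td\ of~$G$ with $T$ as its set of edge separations, in turn: the separation of~$G$ associated with an edge $e$ of~$\mathcal{T}$ is $\{\ve,\ev\}$ itself, because every $O_t$ with $t$ in a fixed component of $\mathcal{T}-e$ contains the orientation of~$e$ pointing into that component; the covering conditions hold because a vertex~$v$, resp.\ the two ends of an edge~$uv$, determine the consistent partial orientation $\{\,(A,B)\in\vT: v\notin A\,\}$ of~$\vT$, resp.\ the union of the two such partial orientations, which by the bijection above extends to some~$O_t$, forcing $v\in V_t$ resp.\ $u,v\in V_t$; and the \td\ connectivity condition holds since $O_{t_2}\sub O_{t_1}\cup O_{t_3}$ whenever $t_2$ lies on the $t_1$--$t_3$ path in~$\mathcal{T}$. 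The delicate point in this verification — and, I expect, the main obstacle — is the treatment of a vertex lying in the overlap $A\cap B$ of the two sides of some $\{A,B\}\in T$: such a vertex does not by itself orient~$\{A,B\}$, so one has to work with the ``forced'' partial orientations above, and it is precisely here that the convention that a graph separation has no edge between $A\sm B$ and~$B\sm A$ is used, to make the forced orientations for the two ends of an edge compatible. This is carried out in full in~\cite{TreeSets}.

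\smallskip

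For~(ii) — the assertion not explicitly stated in those sources — the plan is to observe that the construction $T\mapsto(\mathcal{T},\mathcal{V})$ above is natural. The tree $\mathcal{T}$ together with the isomorphism $\vec E(\mathcal{T})\to\vT$ is determined by the tree set~$\vT$ up to a unique isomorphism of trees~\cite{TreeSets}, and the parts $V_t$ were defined from~$O_t$ by a formula referring only to the separations themselves. Hence an automorphism $\alpha$ of~$G$ with $T^\alpha=T$ induces an order- and involution-preserving bijection of~$\vT$, which transports to an automorphism of~$\mathcal{T}$ mapping each $O_t$ to~$O_{t'}$ for the corresponding image node~$t'$, and therefore each $V_t$ to~$V_{t'}$; thus $\alpha$ acts on $(\mathcal{T},\mathcal{V})$ as an automorphism of the \td. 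Consequently, if $T$ is invariant under all automorphisms of~$G$, then $(\mathcal{T},\mathcal{V})$ is canonical. The only point here that needs a little care is that $\mathcal{T}$ really is uniquely determined by~$\vT$, so that no arbitrary choice in its construction can spoil naturality — but this is exactly the ``essentially unique'' clause of the tree-set/\td\ correspondence of~\cite{TreeSets}.
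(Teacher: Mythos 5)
Your proposal is correct and matches the paper's treatment: the paper gives no independent proof of this lemma but simply invokes the tree-set/\td\ correspondence of~\cite{TreeSets} (and, for regular tree sets, \cite{confing}) for~(i), noting in a footnote that~(ii), though not stated there, is established by the same construction — which is exactly the naturality argument you spell out. Your reconstruction of that construction (nodes as consistent orientations $O_t$, parts $V_t=\bigcap\{B\mid (A,B)\in O_t\}$, canonicity from the uniqueness of the correspondence) is the intended one.
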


Note that we have an immediate proof of Theorem~\ref{fixedk} now. Indeed, since $k$-profiles are $n$-robust for all $n\le k$,%
   \COMMENT{}
   the set $\P$ of all $k$-block profiles and $k$-tangles in a graph~$G$, for $k$ fixed, is always robust. Theorem~\ref{thm2} therefore yields a regular%
   \COMMENT{}
   and invariant tree set $T$ distinguishing all its $k$-blocks and $k$-tangles, which by Lemma~\ref{lem:canon} yields the desired \td\ of~$G$.

Theorem~\ref{generalk} follows in a similar way from Corollary~\ref{cor3}. This takes care of the fact that, when $k$ is variable, profiles can induce each other:

\begin{thmgeneralk}\em
Every finite graph $G$ has a canonical \td{} that efficiently distinguishes all its distinguishable tangles and robust blocks.%
   \COMMENT{}
\end{thmgeneralk}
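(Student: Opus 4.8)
The plan is to obtain Theorem~\ref{generalk} as a direct corollary of Corollary~\ref{cor3}, applied to the submodular universe $\vU$ of all oriented separations $(A,B)$ of the given finite graph $G$, with order function $|(A,B)| = |A\cap B|$. First I would verify that the objects we wish to distinguish are indeed robust and regular profiles in the sense of Section~\ref{secTT}. By Lemma~\ref{blockprofiles}, every $k$-block $b$ induces a regular $k$-profile $P_k(b)$ in $\vU$; if $b$ is robust (in the sense defined in Section~\ref{subsec:graphs}) then each such $P_k(b)$ is robust by definition. By Lemma~\ref{lem:K-robust}, every tangle of $G$ is also a robust regular profile in $\vU$. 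Hence all the distinguishable tangles and robust blocks of $G$ correspond to distinguishable robust regular profiles in $\vU$, and distinguishing the latter in the sense of Section~\ref{secTT} is exactly distinguishing the former in the sense of the Introduction (since a separation $(A,B)$ distinguishes two blocks, or a block from a tangle, precisely when it orients their profiles differently).

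Next I would invoke Corollary~\ref{cor3} to obtain a canonical regular tree set $T\sub\vU$ that efficiently distinguishes all the distinguishable robust and regular profiles in $\vU$. In particular $T$ efficiently distinguishes all the distinguishable tangles and robust blocks of $G$, and $T$ is invariant under every automorphism of $\vU$ — and hence, since automorphisms of $G$ act on $\vU$ in the obvious way, under the automorphisms of $G$. The only remaining task is to translate this tree set of separations back into a tree-decomposition of $G$. This is precisely what Lemma~\ref{lem:canon} provides: applied to the tree set $T$, it yields a tree-decomposition $(\mathcal T,\mathcal V)$ of $G$ whose associated separations are exactly the elements of $T$, and which is canonical because $T$ is automorphism-invariant. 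Since the separations associated with the edges of $(\mathcal T,\mathcal V)$ are exactly those in $T$, and $T$ efficiently distinguishes all the distinguishable tangles and robust blocks, this decomposition does too, completing the proof.

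I expect essentially no obstacle here: all the real work has already been done in Theorem~\ref{thm2}, Corollary~\ref{cor2}, Corollary~\ref{cor3}, Lemma~\ref{lem:canon}, and the two profile lemmas. The only points requiring a word of care are (a) checking that the set of maximal robust regular profiles is the right thing to feed into the argument — but this is handled inside Corollary~\ref{cor3}, which already shows that a tree set efficiently distinguishing the maximal such profiles efficiently distinguishes \emph{all} distinguishable ones (the key point being that a separation distinguishing two profiles also distinguishes any profiles extending them, and order is preserved); and (b) making sure ``efficiently'' in the abstract sense of $\kappa(P,P')$ matches ``efficiently'' in the graph sense of least order of a separating separation — but these coincide by the definition of the order function $|(A,B)|=|A\cap B|$ on $\vU$. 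So the proof is a short assembly of already-established pieces, with the one genuinely substantive input being the passage from tree sets to tree-decompositions via Lemma~\ref{lem:canon}.
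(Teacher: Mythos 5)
Your proposal is correct and follows exactly the paper's own route: Lemmas~\ref{blockprofiles} and~\ref{lem:K-robust} to see that the relevant tangles and robust blocks give robust regular profiles, Corollary~\ref{cor3} for the canonical regular tree set, and Lemma~\ref{lem:canon} to convert it into a canonical tree-decomposition. The extra checks you flag (matching the abstract and graph-theoretic senses of ``distinguish'' and ``efficiently'') are indeed the only points of care, and they are handled just as you describe.
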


\begin{proof}
By Lemmas \ref{blockprofiles} and~\ref{lem:K-robust}, all the profiles in~$G$ that are tangles or induced by a robust block are robust and regular profiles. By Corollary~\ref{cor3} there is a canonical regular tree set $T$ that efficiently distinguishes every two of them. Lemma~\ref{lem:canon} turns $T$ into a \td\ of~$G$.
   \end{proof}

Let us conclude this section with two observations about profiles in graphs.
The first is a kind of `inverse consistency' phenomenon. If $P$ is a $k$-profile%
   \COMMENT{}
   in a graph, $(A,B)\in P$, and $\{A',B'\}\in S_k$ is such that $A'\sub A$ and $B'\supe B$, then also $(A',B')\in P$, by the consistency of~$P$ and $(A',B')\le (A,B)$.

But now suppose that, instead, $A'\supe A$ and $B'\sub B$. Then $(A,B)\le (A',B')$, but still we get $(A',B')\in P$ if $P$ is regular%
    \footnote{Regularity is not a severe restriction. We shall see in a moment that $k$-profiles in graphs are all regular for $k\ge 3$, and for $k=1,2$ the few irregular $k$-profiles are exactly understood.}%
   \COMMENT{}
    and $\{A',B'\}$ does not differ from $\{A,B\}$ too much:

\begin{Prop}\label{inverseconsistent}
   Let $P$ be a regular $k$-profile in a graph~$G$, and let $(A,B) \in P$. Let $A'\supe A$ and $B'\sub B$ be such that both $\{A,B'\}$ and $\{A',B'\}$ lie in~$S_k$.%
   \COMMENT{}
   Then $(A^\prime,B^\prime)\in P$.
\end{Prop}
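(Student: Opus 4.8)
The plan is to deduce the statement from property~(P) and the regularity of~$P$ alone (consistency will not even be needed), by applying the same little manoeuvre twice. The manoeuvre is: given two separations in~$P$ whose join, taken in the universe of all separations of~$G$, has order~$<k$, property~(P) forbids the inverse of that join from lying in~$P$, so $P$ — which orients every separation of~$S_k$ — must contain the join itself; if moreover that join has the form $(V,X)$, then its inverse $(X,V)$ is small, and a regular profile cannot contain a separation with a small inverse, giving a contradiction.

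I first claim that $(A,B')\in P$; note this uses only $(A,B)\in P$, the hypothesis $\{A,B'\}\in S_k$, and regularity. As $\{A,B'\}\in S_k$, the profile $P$ orients it, so suppose for contradiction that $(B',A)\in P$. Since $\{A,B'\}$ is a separation of~$V$ we have $A\cup B'=V$, hence $(A,B)\lor(B',A)=(A\cup B',\,A\cap B)=(V,\,A\cap B)$. This separation has order $|A\cap B|<k$ because $(A,B)\in S_k$, so it lies in~$S_k$ and is oriented by~$P$; but~(P) applied to $(A,B),(B',A)\in P$ says its inverse $(A\cap B,V)$ is not in~$P$, so $(V,A\cap B)\in P$. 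As $(A\cap B,V)\le(V,A\cap B)$ the separation $(A\cap B,V)$ is small, contradicting the regularity of~$P$. Hence $(A,B')\in P$.

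Now suppose, for contradiction, that $(A',B')\notin P$. Since $\{A',B'\}\in S_k$, this forces $(B',A')\in P$. Apply the manoeuvre to $(A,B'),(B',A')\in P$: using $A\cup B'=V$ again, $(A,B')\lor(B',A')=(A\cup B',\,B'\cap A')=(V,\,A'\cap B')$, which has order $|A'\cap B'|<k$ because $\{A',B'\}\in S_k$, so it lies in~$S_k$ and is oriented by~$P$. By~(P), its inverse $(A'\cap B',V)$ is not in~$P$, so $(V,A'\cap B')\in P$; but $(A'\cap B',V)$ is small, again contradicting regularity. Therefore $(A',B')\in P$.

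I do not foresee a genuine obstacle; the only thing needing care is the bookkeeping. One must form the right join each time — with $(A,B)$ first and with $(A',B')$ second — so that the result has order~$<k$, and one must notice exactly where each hypothesis is used: $\{A,B'\},\{A',B'\}\in S_k$ guarantee both that these pairs really are separations of~$V$ (so $A\cup B'=V$) and that the two joins have order~$<k$, while $(A,B)\in S_k$ gives $|A\cap B|<k$. Regularity is indispensable: without it, the separations $(V,A\cap B)$ and $(V,A'\cap B')$ could legitimately lie in~$P$ and the argument would collapse — which is precisely why the proposition assumes $P$ is regular.
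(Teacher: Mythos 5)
Your proof is correct and takes essentially the same route as the paper's: both establish $(A,B')\in P$ by applying (P) to $(A,B),(B',A)$ together with regularity of $P$ via the small separation $(A\cap B,V)$, and then handle $A'\supsetneq A$ by the same manoeuvre. (The paper phrases its second step as an application of the first case to $(B',A')$ with $A\sub A'$, which, unrolled, is exactly your second application of the manoeuvre.)
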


\begin{proof}
Let us first prove the assertion for $A'=A$, i.e., show that ${(A,B')\in P}$.%
   \COMMENT{}
We show that if this fails%
   \COMMENT{}
   then $\{(A,B), (B',A), (B\cap A, A\cup B')\}\sub P$, in violation of~(P).

We have $\{A,B'\}\in S_k$ by assumption, so if $(A,B')$ does not lie in~$P$ as claimed, then $(B',A)$ does. And since $A\cup B' = V$, we have $\{B\cap A, A\cup B'\}\in S_k$.%
   \COMMENT{}
  Since $P$ is regular, it must contain the small separation $(B\cap A, A\cup B')$.

The case of $A'\supsetneq A$ can now be derived from this as follows. If ${(A',B')\notin P}$ then $(B',A')\in P$, since $\{A',B'\}\in S_k$. Then $(B',A)\in P$ by the above case applied to~$(B',A')$, since $A\sub A'$. But this contradicts the fact that, as established above, $(A,B')\in P$.
\end{proof}

Our second observation is that graphs can have profiles that are neither block profiles nor tangles:

\begin{Bsp}\label{Joh} (Carmesin, personal communication 2014)
The graph in Figure~\ref{pic:Johannes} has a $5$-profile that orients both its 4-separations towards the middle. This profile is neither a 5-tangle nor induced by a 5-block.
\end{Bsp}

\pdffig{Johannes}{A 5-profile that is neither a 5-tangle nor induced by a 5-block.}
{0.5}{h}

Example~\ref{Joh} makes it desirable to have a tangle-tree theorem for all the profiles in a graph, not just those that are tangles or block profiles. Corollary~\ref{cor3} yields this too, even without a need to require regularity.

To see this, we need some observations from~\cite{ProfileDuality}. First, that all $k$-profiles in a graph $G=(V,E)$ are regular as soon as $k>2$. For $k=1$ there can be exactly one irregular 1-profile, the set~$\{(V,\emptyset)\}$, and only if $G$ is connected. For $k=2$ there can be irregular profiles, but these can be described precisely: they are precisely the 2-profiles the form
 $$P_v = \{\,(A,B)\in\vS_2\mid v\in B\text{ and } (A,B)\ne (\{v\},V)\,\}$$
where $v\in V$ is not a cutvertex of~$G$. Let us call such profiles in graphs {\em principal\/}. Thus, all profiles in graphs other than those 2-profiles~$P_v$ are non-principal.

\begin{thm}
Every finite graph has a canonical \td\ that distinguishes all its distinguishable robust non-principal profiles.
\end{thm}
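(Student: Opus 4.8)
\medskip\noindent\textbf{Proof idea.}
The plan is to obtain the \td\ from Corollary~\ref{cor3} and Lemma~\ref{lem:canon}, and then patch in the one irregular profile those tools cannot reach. Write $\vU$ for the universe of all separations of~$G$ (a submodular universe, as set up in Section~\ref{subsec:graphs}), and let $\P$ be the set of all robust non-principal profiles in~$G$. First I would invoke the facts quoted from~\cite{ProfileDuality}: every $k$-profile of~$G$ with $k\ge 3$ is regular, the irregular $2$-profiles are precisely the principal ones, and there is at most one irregular $1$-profile of~$G$, namely $\{(V,\emptyset)\}$, which occurs exactly when $G$ is connected. Hence every profile in~$\P$ is regular, with the sole possible exception of $\{(V,\emptyset)\}$; and $\{(V,\emptyset)\}$ is itself robust, its robustness condition~(R) holding vacuously since $(V,\emptyset)$ has order~$0$. (If $V=\emptyset$ there are no profiles and nothing to prove, so assume $V\ne\emptyset$.)

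Next I would apply Corollary~\ref{cor3} to~$\vU$, obtaining a canonical regular tree set $T_0\sub\vU$ that efficiently distinguishes all distinguishable robust regular profiles in~$G$; in particular $T_0$ distinguishes every distinguishable pair of profiles in $\P\sm\{\{(V,\emptyset)\}\}$. By Lemma~\ref{lem:canon}, $T_0$ is exactly the set of separations associated with the edges of a canonical \td\ $(\mathcal{T}_0,\mathcal{V}_0)$ of~$G$.

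I would then pass from $(\mathcal{T}_0,\mathcal{V}_0)$ to the \td\ $(\mathcal{T},\mathcal{V})$ obtained by attaching, to every node $t$ of~$\mathcal{T}_0$, a new pendant node $s_t$ with empty part $V_{s_t}:=\emptyset$. One checks routinely that $(\mathcal{T},\mathcal{V})$ is again a \td\ of~$G$: the union of its parts is still~$V$, each set $\{\,u\mid v\in V_u\,\}$ is unchanged and hence still induces a subtree, and every edge of~$G$ still lies in a common part. Each new pendant edge $ts_t$ is associated with the separation $\{V,\emptyset\}$, since the union of the parts on the side away from~$s_t$ is~$V$; so the separations associated with $(\mathcal{T},\mathcal{V})$ are exactly $T_0\cup\{\{V,\emptyset\}\}$. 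Moreover $(\mathcal{T},\mathcal{V})$ is canonical: an automorphism of~$G$ acting naturally on $(\mathcal{T}_0,\mathcal{V}_0)$ extends to $(\mathcal{T},\mathcal{V})$ by sending $s_t$ to $s_{t'}$ whenever it sends~$t$ to~$t'$. Finally I would verify that $(\mathcal{T},\mathcal{V})$ distinguishes every distinguishable pair in~$\P$: a pair both of whose members lie in $\P\sm\{\{(V,\emptyset)\}\}$ is separated by~$T_0$, while any distinguishable pair involving $\{(V,\emptyset)\}$ must be separated by $\{V,\emptyset\}$ — since $(V,\emptyset)$ is the only oriented separation belonging to $\{(V,\emptyset)\}$ — and $\{V,\emptyset\}$ is one of the associated separations.

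The main obstacle is precisely the profile $\{(V,\emptyset)\}$: being irregular and a singleton, it can only be separated from the remaining profiles by the trivial separation $\{V,\emptyset\}$, which lies in no regular tree set, hence in no \td\ delivered by Corollary~\ref{cor3} and Lemma~\ref{lem:canon}. The empty-pendant construction is the device for reinserting $\{V,\emptyset\}$ into the decomposition canonically without disturbing anything else, and checking that this really yields a canonical \td\ is the one point that needs care. Everything else is bookkeeping with the regularity facts quoted above together with the already-established Corollary~\ref{cor3}.
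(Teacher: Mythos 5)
Your proof is correct and follows essentially the same route as the paper: apply Corollary~\ref{cor3} and Lemma~\ref{lem:canon}, then accommodate the one irregular non-principal profile $\{(V,\emptyset)\}$ of a connected graph by adding an empty part whose pendant edge is associated with the separation $\{V,\emptyset\}$, which distinguishes that profile from all others by consistency. The only difference is the device for keeping the attachment canonical --- you hang an empty pendant off \emph{every} node, whereas the paper attaches a single empty part to the centre of the tree (subdividing a central edge if necessary); both achieve canonicity, and yours avoids the central-node/central-edge case distinction.
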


\begin{proof}
By Corollary~\ref{cor3} there is a canonical regular tree set $T$ that distinguishes all the distinguishable, robust, non-principal $k$-profiles in~$G$ for $k>1$ as well as for $k=1$ if they are regular, since all these profiles for $k>1$ are regular~\cite{ProfileDuality}. Lemma~\ref{lem:canon} turns $T$ into a \td\ $(\mathcal T,\V)$ that distinguishes all these profiles, with $\V = (V_t)_{t\in\mathcal T}$ say. If $G$ is disconnected, then these are all its robust non-principal profiles, and the proof is complete.

If $G$ is connected, it als has the irregular non-principal 1-profile $\{(V,\emptyset)\}$. To accommodate this, we add a new empty part~$V_t$ to our decomposition, joining $t$ to some node $z$ of~$\mathcal T$ by an edge associated with the separation~$\{\emptyset,V\}$. This separation will distinguish $\{(V,\emptyset)\}$ from all the other profiles in~$G$, since these contain $(\emptyset,V)$ by consistency.%
   \COMMENT{}

The only problem is how to choose~$z$ so that the \td\ remains canonical. If $\mathcal T$ has a central node, we choose this as~$z$. If not, it has a central edge~$xy$. Subdivide $xy$ by a new node~$z$, with $V_z := V_x\cap V_y$. Then every automorphism of~$\mathcal T$ is, or extends to, an automorphism of this (possibly modified) tree~$\mathcal T'$ that fixes~$z$. In particular, the automorphisms of~$G$, which act on~$\mathcal T$ by Lemma~\ref{lem:canon}, still act on~$\mathcal T'$. Now join $t$ to~$z$. Since all automorphisms of $G$ fix $V_t = \emptyset$, the extended \td\ will again be canonical.%
   \COMMENT{}
\end{proof}

\subsection{The canonical tangle-tree theorem for matroids}\label{subsec:matroids}

Let $M$ be a finite matroid with ground set~$E$. Let $\vU = (\vU,\le\,,\!{}^*,\vee,\wedge,|\ |)$ be the universe of the oriented bipartitions of~$E$, where $\lor$ and $\land$ are defined as for set separations in Section~\ref{subsec:SeparationSystems}, and $|\ \ |$ is the usual submodular order function given by
 $$|\{A,B\}| = |(A,B)| = r(A) + r(B) - r(E) +1,$$ 
 where $r$ is the rank function of~$M$. As before, we write $\vS{}_k$ for the separation system of all the separations in~$\vU$ of order less than~$k$.

Tangles in matroids were introduced implicitly by Robertson and Seymour~\cite{GMX}, and explicitly by Geelen, Gerards, Robertson and Whittle~\cite{BranchDecMatroids}. A $k$-{\em tangle\/} in~$M$ is an orientation of~$S_k$ that has no element of the form $(E\sm\{e\},e)$ and no subset of the form
$$\{(A_1,B_1),(A_2,B_2),(A_3,B_3)\}\text{ with }  A_1\cup A_2\cup A_3 = E.\eqno{\rm (T_M)}$$
 As with graphs, the separations $(A_1,B_1), (A_2,B_2), (A_3,B_3)$ above need not be distinct. In particular, tangles are consistent.%
   \COMMENT{}
   Since they obviously also satisfy~(P), they are in fact profiles, indeed robust and regular profiles.%
   \COMMENT{}

Unlike for graphs, however, the tangles in a matroid are essentially all its profiles. Let us call a profile $P$ in~$M$ {\em principal\/} if there exists an $e\in E$ such that $(A,B)\in P$ if and only if $e\in B$, for all bipartitions $\{A,B\}$ oriented by~$P$.

\begin{Lem}
For every integer $k\ge 1$, the $k$-tangles in a matroid are precisely its non-principal $k$-profiles.
\end{Lem}

\begin{proof}
We have seen that matroid tangles are profiles, and they are non-principal by definition. Conversely note that, by consistency, a profile is principal as soon as it contains a separation of the form $(E\sm\{e\},\{e\})$.%
   \COMMENT{}
   Hence a non-principal profile contains no such separation, and thus satisfies the tangle axiom requiring this. And by Theorem~\ref{P3} it satisfies~(P$^+$), which immediately implies~${\rm (T_M)}$.%
   \COMMENT{}
\end{proof}

A \emph{\td} of~$M$ is a pair $(T,\V)$ where $T$ is a tree and $\V = (\,V_t\mid t\in V(T)\,)$ is a partition of~$E$.
As in graphs, we say $(\mathcal{T},\mathcal{V})$ \emph{(efficiently) distinguishes} two 
tangles in~$M$ if the set of separations of~$M$ that are associated with the edges of~$T$~-- defined as for graphs~-- distinguishes these tangles (efficiently). Lemmas \ref{lem:K-robust} and~\ref{lem:canon} hold for \td{}s of matroids too, with the same easy proofs.

$\!\!$Theorem~\ref{matroids} follows from Corollary~\ref{cor3} much as Theorem~\ref{generalk} did:

\begin{thmmatroids}\em
   Every finite matroid has a canonical \td{} which
   efficient\-ly distinguishes all its distinguishable tangles.
\end{thmmatroids}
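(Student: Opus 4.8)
The plan is to deduce Theorem~\ref{matroids} from Corollary~\ref{cor3} in exactly the same manner as Theorem~\ref{generalk} was deduced, using the matroid analogue of Lemma~\ref{lem:canon} to convert the abstract tree set into a \td\ of the matroid. First I would invoke the preceding Lemma, which identifies the $k$-tangles of a matroid $M$ with its non-principal $k$-profiles, together with the remark that these tangles are robust and regular profiles in the universe $\vU$ of oriented bipartitions of the ground set~$E$. Since $\vU$ is a submodular universe of separations (with the order function $|(A,B)| = r(A)+r(B)-r(E)+1$), Corollary~\ref{cor3} applies and yields a canonical regular tree set $T\sub\vU$ that efficiently distinguishes all the distinguishable robust and regular profiles in~$\vU$; in particular it efficiently distinguishes all the distinguishable tangles of~$M$, and is invariant under the automorphisms of~$M$.

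The second step is to turn this tree set~$T$ into a \td\ of~$M$. Here I would appeal to the matroid version of Lemma~\ref{lem:canon}, noted in the text to hold for \td s of matroids with the same easy proof: every tree set of separations of~$M$ is precisely the set of separations associated with the edges of some \td\ $(\mathcal T,\V)$ of~$M$, and if $T$ is invariant under the automorphisms of~$M$ then $(\mathcal T,\V)$ can be chosen canonical. Applying this to our~$T$ produces a canonical \td\ whose associated separations are exactly those in~$T$; since $T$ efficiently distinguishes all the distinguishable tangles of~$M$, so does $(\mathcal T,\V)$ by definition of (efficient) distinguishing for \td s of matroids.

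The proof is therefore essentially a bookkeeping exercise, and I do not expect any genuine obstacle: the substance is already contained in Corollary~\ref{cor3}, in the identification of matroid tangles as robust regular profiles, and in the matroid analogue of Lemma~\ref{lem:canon}. The one point that needs a word of care is to check that every tangle of~$M$ is a profile \emph{in the sense used in Corollary~\ref{cor3}} — i.e.\ that tangles are robust and regular — but this has already been recorded before the statement, via the matroid tangle axioms together with Theorem~\ref{P3}. So the only thing to write is the short assembly of these pieces.

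\begin{proof}
By the preceding Lemma the $k$-tangles of~$M$ are exactly its non-principal $k$-profiles, and these are robust regular profiles in the submodular universe~$\vU$ of oriented bipartitions of~$E$. By Corollary~\ref{cor3} there is a canonical regular tree set $T\sub\vU$ that efficiently distinguishes all the distinguishable robust and regular profiles in~$\vU$, and in particular all the distinguishable tangles of~$M$. Since $T$ is invariant under the automorphisms of~$M$, the matroid analogue of Lemma~\ref{lem:canon} provides a canonical \td\ $(\mathcal T,\V)$ of~$M$ whose associated separations are precisely the elements of~$T$. As $T$ efficiently distinguishes all the distinguishable tangles of~$M$, so does $(\mathcal T,\V)$.
\end{proof}
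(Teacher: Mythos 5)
Your proposal is correct and follows exactly the paper's own route: establish that matroid tangles are robust regular profiles in the submodular universe of oriented bipartitions, apply Corollary~\ref{cor3} to obtain a canonical regular tree set efficiently distinguishing them, and convert it to a canonical \td\ via the matroid analogue of Lemma~\ref{lem:canon}. The detour through the identification of tangles with non-principal profiles is not needed for the argument but does no harm.
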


\begin{proof}
By Lemma~\ref{lem:K-robust}, the tangles in~$M$ are robust and regular profiles. By Corollary~\ref{cor3} there is a canonical regular tree set $T$ that efficiently distinguishes every two of them. Lemma~\ref{lem:canon} turns $T$ into a \td\ of~$M$.
\end{proof}

\section{$\!$Applications in data analysis and optimization}\label{sec:furtherapplications}

\subsection{Applying profiles to cluster analysis}\label{subsec:clusters}

Both the canonical and the non-canonical tangle-tree theorem lend themselves to the analysis of big data sets when clusters in these are interpreted as profiles. This approach to clusters has an important advantage over more traditional ways of identifying clusters: real-world clusters tend to be fuzzy, and profiles can capture them despite their fuzziness; cf.\ the grid example from the introduction.

Let $D$ be a (large data) set with at least two elements,%
   \COMMENT{}
   and let $U$ be a set of separations $\{A,B\}$ of~$D$ whose orientations $(A,B)$ form a universe~$\vec U$ of set separations as defined in Section~\ref{subsec:basics}.%
   \footnote{We allow $A$ and $B$ to be empty, since $\{\emptyset,D\}\in U$ is needed for $\vec U$ to be a universe.}
   Let us call $D$ {\em submodular\/} if it comes with a fixed submodular such universe~$\vU$, one with a submodular order function $\{A,B\}\mapsto |A,B|$ on~$U$.

How exactly this order function should be defined will depend both on the type of data that $D$ represents and on the envisaged type of clustering. See~\cite{MonaLisa} for examples of how this might be done when $D$ is the set of pixels of an image, $U$~consists of just the bipartitions of~$D$, and the clusters to be captured are the natural regions of this image such as a nose, or a cheek, in a portrait.

If $U$ consists of just the bipartitions of~$D$, then all profiles in~$\vec U$ are regular, because the only small such separation of~$D$ is $(\emptyset,D)$,
   which is in fact trivial since $|D|\ge 2$.%
   \COMMENT{}
   And they are all robust; see Footnote~\ref{robustfootnote}. Since every $d\in D$ induces a profile $P_d := \{\,(A,B)\mid d\in B\,\}$ of~$U$ (just choose $k$ large enough),%
   \COMMENT{}
   any tree set of separations that distinguishes all the profiles in~$\vec U$ will contain, for every pair of distinct data $a,b\in D$, a~bipartition $\{A,B\}$ of~$D$ such that $a\in A$ and $b\in B$.

Let us call the robust and regular profiles in~$U$ the {\em clusters\/} of~$D$. Theorem~\ref{clusters} then becomes a consequence of Theorem~\ref{thm2}, via Corollary~\ref{cor3}:

\begin{thmclusters}
Every submodular data set has a canonical regular tree set of separations which efficiently distinguishes all its distinguishable clusters.%
   \COMMENT{}
   \qed
\end{thmclusters}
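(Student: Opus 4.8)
The plan is to read off Theorem~\ref{clusters} from Corollary~\ref{cor3}, in exactly the way Theorems~\ref{generalk} and~\ref{matroids} were read off from it for graphs and matroids. By definition, a submodular data set~$D$ carries a fixed submodular universe $\vec U = (\vec U,\le\,,{}^*,\vee,\wedge,|\ |)$ of set separations of~$D$, and its clusters were defined to be precisely the robust and regular profiles in~$\vec U$. So the first, and essentially only, step is to invoke Corollary~\ref{cor3} for this universe~$\vec U$: it provides a canonical regular tree set $T\sub\vec U$ that efficiently distinguishes all the distinguishable robust and regular profiles in~$\vec U$. Since these are exactly the clusters of~$D$, the set $T$ is as required.

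The only point that calls for a comment is the word \emph{canonical}. Corollary~\ref{cor3} delivers its tree set in the form $T=T(\P)$ with $\P$ the set of maximal robust and regular profiles in~$\vec U$, and, inheriting property~(iii) of Theorem~\ref{thm2}, it satisfies $T(\P^\alpha)=T(\P)^\alpha$ for every automorphism $\alpha$ of the separation universe~$\vec U$. Every symmetry of the data set~$D$ — every bijection of~$D$ respecting~$U$ and the order function $\{A,B\}\mapsto|A,B|$ — induces such an automorphism~$\alpha$. Since the notions `profile', `regular' and `robust' are phrased purely in terms of $\le$, ${}^*$, $\vee$, $\wedge$ and $|\ |$, the automorphism~$\alpha$ permutes the robust and regular profiles among themselves and fixes the maximal ones setwise, so $\P^\alpha=\P$ and hence $T^\alpha=T$. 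Thus $T$ is invariant under all symmetries of~$D$, which is what `canonical' should mean here.

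I do not expect a genuine obstacle in this last deduction; all the substance lies upstream, in Theorem~\ref{thm2}. In particular, the argument from the proof of Corollary~\ref{cor3} — that distinguishing the \emph{maximal} robust and regular profiles efficiently automatically distinguishes every pair of smaller such profiles efficiently, since a separation distinguishing two profiles also distinguishes any profiles extending them — used nothing about~$\vec U$ beyond its being a submodular universe of separations, and so carries over verbatim. Hence Theorem~\ref{clusters} needs no argument beyond the application of Corollary~\ref{cor3} together with the bookkeeping about symmetries above.
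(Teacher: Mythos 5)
Your proposal is correct and is exactly the paper's argument: the paper defines the clusters of a submodular data set to be the robust and regular profiles in its universe $\vec U$ and then derives Theorem~\ref{clusters} immediately from Corollary~\ref{cor3}, which is why the statement carries a \qed with no separate proof. Your additional remarks on canonicity just make explicit the bookkeeping the paper leaves implicit.
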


\subsection{Edge-tangles and Gomory-Hu trees}\label{edge-tangles}

In this section we illustrate Theorem~\ref{thm2min} by applying it to some specific profiles, sometimes called `edge-tangles'.%
   \COMMENT{}
   We reobtain a classical theorem of Gomory and Hu~\cite{GomoryHu}, which asserts the existence of certain trees displaying optimal cuts in a graph. Our point is not that obtaining these trees via Theorem~\ref{thm2min} is simpler than their elementary existence proof, although that is not exactly short either.  But seeing what Theorem~\ref{thm2min} does in this special case can illustrate its power: it shows, in a familiar setting, what it can do more generally with profiles that arise in other contexts, such as~\cite{ProfileDuality}.

Consider a finite graph $G=(V,E)$ with a positive real-valued weight function~$g$ on the edges. A~tree $T$ on~$V\!$, not necessarily a subgraph of~$G$, is a {\it Gomory-Hu tree\/} for $G$ and~$g$ if for all distinct $u,v\in V$ there is an edge on the $u$--$v$ path in~$T$ whose fundamental cut in~$G$%
   \COMMENT{}
   has minimum $g$-weight among all $u$--$v$ cuts of~$G$. (For details, see Frank~\cite[Ch.\,7.2.2]{FrankBook}.)

\begin{thm} {\rm (Gomory \& Hu, 1961)}\\
Every finite graph with positive real edge weights has a Gomory-Hu tree.
\end{thm}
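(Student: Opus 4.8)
The plan is to deduce the theorem from Theorem~\ref{thm2min}, applied to a suitable family of `edge-tangles' on the vertex set. First I would set up the separation universe: let $U$ be the set of all bipartitions $\{A,B\}$ of~$V$ (so $A\cup B=V$ and $A\cap B=\emptyset$, with $A$ or $B$ allowed to be empty), let $\vU$ be the universe of their orientations $(A,B)$ as in Section~\ref{subsec:basics}, and equip it with the order function $|(A,B)|$ defined as the total $g$-weight of the set of edges of $G$ with exactly one end in~$A$. This function is nonnegative and symmetric, and the inequality $|\vr\vee\vs|+|\vr\wedge\vs|\le|\vr|+|\vs|$ is the classical submodularity of the cut function (which uses $g>0$); so $\vU$ is a submodular universe. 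Since $V$ is finite, only finitely many real numbers occur as orders, so for distinguishable profiles $\kappa$ is genuinely a minimum attained by some separation. For each $v\in V$ put $P_v:=\{\,(A,B)\in\vU\mid v\in B\,\}$. As observed in Section~\ref{subsec:clusters} (taking the data set to be $V$ with $U$ the set of all its bipartitions), each $P_v$ is a regular and robust profile of~$U$. A separation $\{A,B\}$ distinguishes $P_u$ from $P_v$ precisely when $\{A,B\}$ is a $u$--$v$ cut of~$G$, and then its order is the $g$-weight of that cut; hence $P_u,P_v$ are distinguishable and $\kappa(P_u,P_v)$ equals the minimum $g$-weight $\lambda_g(u,v)$ of a $u$--$v$ cut of~$G$. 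So $\P:=\{\,P_v\mid v\in V\,\}$ is a set of pairwise distinguishable, regular, robust profiles, and is therefore robust in the sense required by Theorem~\ref{thm2min}.

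Next I would apply Theorem~\ref{thm2min} to $\vU$ and this $\P$, obtaining a regular tree set $T'$ of separations in $\vU$ with properties (i)--(iii) of that theorem. By the correspondence between regular tree sets of separations and trees used in its proof (from~\cite{TreeSets}), there is a tree~$\tau$ with edge set~$T'$ whose natural edge ordering agrees with the one induced from~$\vU$ and whose node set is the set $\O$ of consistent orientations of~$T'$, an edge $s$ pointing towards a node $O$ exactly when $\vs\in O$. By~(iii) the map $P\mapsto\PT$ is a bijection from $\P$ onto~$\O$; composing it with $v\mapsto P_v$ gives a bijection from $V$ onto the node set of~$\tau$, which I use to regard $\tau$ as a tree on~$V$. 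This is the candidate Gomory-Hu tree.

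It remains to verify the Gomory-Hu property, and this bookkeeping step is where I expect the only real work to lie. Fix an edge $s=\{A,B\}\in T'$ of~$\tau$, with $\vs=(A,B)$. Deleting $s$ splits the node set $V$ of~$\tau$ into $\{\,v\mid \vs\in (P_v)_{T'}\,\}$ and $\{\,v\mid \sv\in (P_v)_{T'}\,\}$; since $(P_v)_{T'}$ extends $P_v\cap\vec{T'}$ and $s\in T'$, the node $(P_v)_{T'}$ contains $\vs$ iff $P_v$ does, i.e.\ iff $v\in B$, so these two sides of $\tau-s$ are exactly $B$ and~$A$. Hence the fundamental cut in $G$ of the edge $s$ of $\tau$ is precisely the cut given by the bipartition $\{A,B\}$, and its $g$-weight is~$|s|$. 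Now take distinct $u,v\in V$. As noted in the proof of Theorem~\ref{thm2min}, the edges of $T'$ lying on the $u$--$v$ path in $\tau$ are exactly the separations in $T'$ that distinguish $P_u$ from~$P_v$; by~(i) one of them, say~$s$, does so efficiently, so $|s|=\kappa(P_u,P_v)=\lambda_g(u,v)$. By the previous paragraph the fundamental cut of this edge $s$ in $G$ is a $u$--$v$ cut of $g$-weight $\lambda_g(u,v)$, hence of minimum $g$-weight among all $u$--$v$ cuts of~$G$. Thus $\tau$ is a Gomory-Hu tree for $G$ and~$g$. The main obstacle is making the identifications above precise: checking that the abstract tree returned by Theorem~\ref{thm2min} really is a tree on $V$, and that the bipartition of $V$ cut out by each of its edges is exactly the separation labelling that edge; once this is in place, assertions~(i) and~(iii) of Theorem~\ref{thm2min} give the theorem.
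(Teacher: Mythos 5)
Your proposal is correct and follows essentially the same route as the paper's proof: the same universe of bipartitions of $V$ with the weighted cut function as order function, the same point profiles $P_v$, an application of Theorem~\ref{thm2min} to their set, and the same identification of the resulting tree set with a tree on $V$ via the bijection $v\mapsto P_v\mapsto (P_v)_{T'}$ onto the consistent orientations of~$T'$. The verification that each tree edge's fundamental cut is exactly the separation labelling it, and that an efficiently distinguishing separation on the $u$--$v$ path realises the minimum-weight $u$--$v$ cut, matches the paper's argument.
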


\begin{proof}
To deduce this theorem from Theorem~\ref{thm2min}, consider the universe $\vU$ of all set separations of~$V\!$ that are bipartitions of~$V\!$ with the order function $|A,B| := \sum\>\{\,g(ab)\mid a\in A,\> b\in B,\> ab\in E(G)\,\}$, which is clearly submodular. Every vertex $v\in V$ induces a robust regular profile $P_v := \{\,(A,B)\mid v\in B\,\}$ of~$U$, and $v\mapsto P_v$ is a bijection from~$V\!$ to the set $\P$ of all these profiles of~$U$.%
   \COMMENT{}

Let $T'\sub U$ be the tree set which Theorem~\ref{thm2min} finds for~$\P$.
For every $v\in V\!$, the profile $P_v$ of~$U\!$ induces a profile~$t_v$ of $T'\sub U$, and $v\mapsto P_v\mapsto t_v$ is a bijection from $V\!$ to the set $\O$ of all the profiles of~$T'$. (Since $T'$ is a tree set, its profiles are just its consistent orientations.)%
   \COMMENT{}

Every $s\in T'$ induces a bipartition of~$\O$ into the set $\O_{\!\vs}$ of profiles that orient $s$ as~$\vs$ and the set~$\O_{\!\sv}$ of profiles that orient $s$ as~$\sv$. And $s$ is itself a bipartition of~$V\!$, say $\vs=(U,W)$. These bipartitions correspond via our bijection~$v\mapsto t_v$:
 $$U = \{\, v\in V\mid \sv\in t_v\,\} = \{\, v\in V\mid t_v\in \O_{\!\sv}\}$$
 $$W = \{\, v\in V\mid \vs\in t_v\,\}= \{\, v\in V\mid t_v\in\O_{\!\vs}\}.$$
In other words, our bijection $v\mapsto t_v$ maps $\vs = (U,W)$ to~$(\O_{\!\sv}, \O_{\!\vs})$.
 
It is not hard to show~\cite{TreeSets} that for every $s\in T'$ there exist unique $t_u,t_v\in\O$ that differ only on~$s$.%
   \COMMENT{}
   Then $T'$ becomes the edge set of a graph~$\T$ on~$\O$ if we let $s$ join $t_u$ to~$t_v$, for every $s\in T'$. This graph~$\T$ is in fact a tree~\cite{TreeSets}. For every edge $s$ of~$\T$, the vertex sets $\O_{\!\vs},\O_{\!\sv}$ induce the two components of the forest $\T-s$, because profiles adjacent as vertices of~$\T-s$ agree on~$s$.%
   \COMMENT{}

Let $T$ be the tree on~$V\!$ that is the image of~$\T$ under our bijection $t_v\mapsto v$ from $\O$ to~$V\!$. Every edge $e$ of~$T$ then arises from an edge $s$ of~$\T$, and the bipartition $\{U,W\}$ of~$V$ defined by the components of $T-e$ corresponds via $v\mapsto t_v$ to the bipartition $\{\O_{\!\sv}, \O_{\!\vs}\}$ of~$\O$ defined by the components of~$\T-s$. As shown earlier, this means that $s = \{U,W\}$.

Let us show that $T$ is a Gomory-Hu tree for $G$ and~$g$. For all distinct $u,v\in T$ the separations in $T'$ that distinguish $t_u$ from~$t_v$ (equivalently: $P_u$ from~$P_v$) are precisely the edges on the $t_u$--$t_v$ path in~$\T$.%
   \COMMENT{}
   One of these, $s$~say, distinguishes $P_u$ from~$P_v$ efficiently, by the choice of~$T'$. By definition of~$T$, the edge $e\in T$ corresponding to~$s$ lies on the $u$--$v$ path in~$T$. The bipartition of $V\!$ defined by the two components of $T-e$%
   \COMMENT{}
   is precisely~$s$. The fact that $s$ distinguishes $P_u$ from~$P_v$ efficiently means that the cut of~$G$ it defines has minimum weight among all the $u$--$v$ cuts of~$G$, as desired.
   \end{proof}

\section{Width duality for profiles in graphs}\label{sec:duality}

All our theorems deal with distinguishing existing profiles in a given universe of separations, but say nothing if such profiles do not exist. For fixed~$k$, for example, we can ask what we can say about the structure of a graph that has no $k$-profile. Such graphs do exist:

\begin{Bsp}
Let $G$ be a graph with a \td\ of width~$<k-1$, one whose parts have order~$<k$. Any $k$-profile $P$ in $G$ will orient all the separations associated with an edge of the decomposition tree~$T$, and will thus induce a consistent orientation of~$E(T)$. This orientation will points towards some node~$t$ of~$T$.

Now consider a separation of $G$ associated with an edge $t't$ at~$t$, say $(A,B)$, with $B$ containing the part~$V_t$ corresponding to~$t$. Let $A':= A\cup V_t$. Then $(A',B)$  is still in~$\vS_k$, since $|A'\cap B| = |V_t| < k$, and $(A',B)\in P$ by Proposition~\ref{inverseconsistent}.%
   \COMMENT{}

If $T$ has another edge at~$t$, say $t''t$, do the same to obtain $(A'',C)\in P$. Then, by~(P), also $(A'\cup A'', B\cap C)\in P$; note that this separation too has order $|V_t|<k$. 

Repeating this step for all the edges of $T$ at~$t$, we eventually arrive at $(V,V_t)\in P$.%
   \COMMENT{}
   If $|V_t|\le k-2$, which we can easily ensure, this contradicts the consistency of~$P$, since $(V,V_t)$ will then be co-trivial: just add a vertex to its $V_t$-side to obtain a larger separation than $(V_t,V)$ that is still small.
\end{Bsp} 

For tangles, Robertson and Seymour~\cite{GMX} address the problem of characterizing the structure of graphs without a tangle of given order: they show that a graph either has a $k$-tangle or a \td%
   \footnote{The `branch-decompositions' used in~\cite{GMX} are easily translated into \td s~\cite{TangleTreeGraphsMatroids}.}
   that is clearly incompatible with the existence of a $k$-tangle, and which thus witnesses its non-existence. Similar duality theorems of this type were obtained in~\cite{TangleTreeGraphsMatroids} for other tangle-like structures, consistent orientations of separation systems avoiding some given set~$\mathcal F$ of `forbidden subsets'. (For classical tangles, $\mathcal F$~would consist of the triples $\{(A_1,B_1), (A_2,B_2), (A_3,B_3)\}$ satisfying~(T).) All these are applications of a fundamental such duality theorem for abstract separation systems~\cite{TangleTreeAbstract}.

The sets~$\mathcal F$ for which such duality theorems are proved in~\cite{TangleTreeAbstract} have to consist of `stars': sets of separations all pointing towards each other. For classical tangles this does not matter: one easily shows that a consistent orientation of~$S_k$, say, avoids all the triples as in~(T) as soon as it avoids the stars satisfying~(T). For profiles, however, this is not the case~-- which is why \cite{TangleTreeGraphsMatroids} has no duality theorem for profiles.

However, it was shown in~\cite{ProfileDuality} that profiles can be characterized by a forbidden set $\mathcal F$ of stars of separations after all, and thus have a duality theorem. Indeed, for each~$k$ there is a set ${\mathcal T}_{\mathcal P}(k)$ of \td s which graphs with a $k$-profile cannot have, and which graphs without a $k$-profile must have:

\begin{thm}\label{t:prof} {\rm\cite{ProfileDuality}}
For every finite graph $G$ and every integer $k > 0$ exactly one of the following statements holds:
\begin{itemize}\itemsep=0pt\vskip-\smallskipamount\vskip0pt
\item $G$ has a $k$-profile;
\item $G$ has a \td\ in~${\mathcal T}_{\mathcal P}(k)$.
\end{itemize}
\end{thm}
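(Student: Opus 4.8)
The plan is to deduce Theorem~\ref{t:prof} from the general tangle-tree duality theorem for abstract separation systems of Diestel and Oum~\cite{TangleTreeAbstract}, along the lines by which the tangle duality theorem of Robertson and Seymour was deduced in~\cite{TangleTreeGraphsMatroids}. We work in the submodular universe~$\vU$ of all separations $(A,B)$ of $G=(V,E)$ with order $|(A,B)| := |A\cap B|$, and write $\vS_k\sub\vU$ for its separations of order~$<k$, so that a $k$-profile in~$G$ is by definition a consistent orientation of $S_k$ satisfying~(P). The abstract duality theorem, by contrast, is about \emph{$\mathcal F$-tangles}: consistent orientations of $S_k$ avoiding every member of a prescribed set~$\mathcal F$ of \emph{stars} of separations; under suitable hypotheses on $\vS_k$ and~$\mathcal F$ it asserts that exactly one of the following holds~-- $S_k$ has an $\mathcal F$-tangle, or $S_k$ has an \emph{$S_k$-tree over~$\mathcal F$}, a tree-decomposition-like structure whose node stars all belong to~$\mathcal F$. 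So the task splits into two parts: first, to express `being a $k$-profile' as avoidance of a set of stars; second, to verify the hypotheses of~\cite{TangleTreeAbstract} and translate its output back into graph language.

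The crux is the first part: constructing a set ${\mathcal F}_{\mathcal P}(k)$ of stars of separations of order~$<k$ whose $\mathcal F$-tangles are precisely the $k$-profiles in~$G$. The obstruction, already noted in the text above, is that~(P) is not a forbidden-star condition: in a profile~$P$ it forbids the triples $\{\vr,\vs,(\vr\lor\vs)^*\}$ with $\vr,\vs\in P$, and when $r$ and~$s$ cross such a triple is not even nested, let alone a star. Following~\cite{ProfileDuality}, the remedy is to replace each such triple by an honest star obtained from it by \emph{shifting}: given a consistent orientation~$P$ that contains $\vr,\vs$ and $(\vr\lor\vs)^*$, one moves these separations to suitably chosen extremal positions inside~$P$ so that the resulting separations form a star~$\sigma\sub P$, of order~$<k$, which no $k$-profile can contain. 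Let ${\mathcal F}_{\mathcal P}(k)$ consist of all the stars obtained in this way, together with the singletons of the co-trivial separations of order~$<k$ (a harmless addition that makes~$\mathcal F$ `standard' in the sense required by~\cite{TangleTreeAbstract}). One then has to prove both inclusions: every $k$-profile avoids~${\mathcal F}_{\mathcal P}(k)$, since each of its stars either witnesses a violation of~(P) or is the singleton of a co-trivial separation; and conversely every consistent ${\mathcal F}_{\mathcal P}(k)$-avoiding orientation of~$S_k$ satisfies~(P)~-- which is the content of the shifting construction, run in reverse.

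It then remains to check the hypotheses of the abstract duality theorem for $\vS_k$ and~${\mathcal F}_{\mathcal P}(k)$. That $\vS_k$ is \emph{submodular} inside~$\vU$, i.e.\ that for incomparable $\vr,\vs\in\vS_k$ at least one of $\vr\lor\vs,\,\vr\land\vs$ again lies in~$\vS_k$, is immediate from submodularity of $|\,\cdot\,|$ (this is the point already used in Corollary~\ref{easycor}); that ${\mathcal F}_{\mathcal P}(k)$ is standard holds by construction. The real hypothesis to verify is that $\vS_k$ be \emph{${\mathcal F}_{\mathcal P}(k)$-separable}: a shifting lemma to the effect that whenever two separations of order~$<k$ cross, a star of~${\mathcal F}_{\mathcal P}(k)$ lying across one of them can be shifted to the other side while remaining in~${\mathcal F}_{\mathcal P}(k)$ and keeping all orders~$<k$. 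Here one uses submodularity to keep the orders down, and the precise recipe for~${\mathcal F}_{\mathcal P}(k)$ to see that it is closed under the shifts that arise; the corner separations involved are nested with everything the shifted star was nested with, by Lemma~\ref{lem:fish}. Granting all this, \cite{TangleTreeAbstract} gives: either $S_k$ has an ${\mathcal F}_{\mathcal P}(k)$-tangle~-- that is, by the previous paragraph, $G$ has a $k$-profile~-- or $S_k$ has an $S_k$-tree over~${\mathcal F}_{\mathcal P}(k)$, whose underlying nested set of separations comes from a tree-decomposition of~$G$ by the correspondence between tree sets and tree-decompositions used in Lemma~\ref{lem:canon}. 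Defining ${\mathcal T}_{\mathcal P}(k)$ to be the class of tree-decompositions so obtained, membership in it is visibly incompatible with having a $k$-profile: such a profile would orient all the decomposition's edge separations consistently, hence towards some node, whose node star then lies in~${\mathcal F}_{\mathcal P}(k)$ and is wholly contained in the profile~-- impossible. (This last argument is carried out explicitly, for decompositions of width~$<k-1$, in the worked example preceding the theorem, using Proposition~\ref{inverseconsistent}.) Hence exactly one of the two statements holds.

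I expect the decisive obstacle to be the two points that are genuinely about profiles rather than about the abstract machinery: pinning down the shifting construction of~${\mathcal F}_{\mathcal P}(k)$ and proving that its $\mathcal F$-tangles are exactly the $k$-profiles, and verifying ${\mathcal F}_{\mathcal P}(k)$-separability. Once the right family of stars is in hand, invoking~\cite{TangleTreeAbstract} and translating the resulting $S_k$-tree into a tree-decomposition of~$G$ is routine, as in~\cite{TangleTreeGraphsMatroids}.
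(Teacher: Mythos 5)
The paper itself contains no proof of this theorem: it is imported verbatim from \cite{ProfileDuality}, and Section~\ref{sec:duality} only indicates the strategy, namely that profiles~-- unlike what is done for tangles in \cite{TangleTreeGraphsMatroids}~-- can after all be characterized by a forbidden set of \emph{stars}, whereupon the abstract duality theorem of \cite{TangleTreeAbstract} applies. Your outline reproduces exactly that indicated route (recast (P) as star-avoidance via shifting, check standardness and ${\mathcal F}_{\mathcal P}(k)$-separability, translate the resulting $S_k$-tree into a tree-decomposition), so there is no divergence to report; but the two steps you yourself identify as decisive~-- pinning down ${\mathcal F}_{\mathcal P}(k)$ so that its $\mathcal F$-tangles are precisely the $k$-profiles, and verifying separability~-- are precisely the content of \cite{ProfileDuality} and are not carried out here, so what you have is a correct plan rather than a proof.
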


\noindent
   A similar duality theorem holds more generally for regular profiles of abstract separation systems~\cite[Theorem~12]{ProfileDuality}.

\section{An open problem for tangles}\label{sec:problems}

Unlike blocks and tangles, profiles can exist in abstract separation systems that need not even consist of separations of sets. This motivates the notion of a profile independently of their examples in graphs or matroids.

But for profiles in graphs, it is meaningful to ask just how different arbitrary profiles can be from block profiles: are they always induced by some fixed set of vertices in the same way as a block profile $P_k(b)$ is induced by~$b$?

This problem appears to be open even for tangles:

\begin{Problem}
   Given a $k$-tangle $\theta$ in a graph~$G$, is there always a set $X$ of vertices such that a separation $(A,B)$%
   \COMMENT{}
   of order~$<k$ lies in~$\theta$ if and only $|A\cap X| < |B\cap X|$?%
   \COMMENT{}
\end{Problem}

\COMMENT{}

%\nocite{*} %alle Eintr'ge nutzen, nicht nur die zitierten
\bibliography{collective}{}

\begin{thebibliography}{10}

\bibitem{ForcingBlocks}
J.~Carmesin, R.~Diestel, M.~Hamann, and F.~Hundertmark.
\newblock $k$-{B}locks: a connectivity invariant for graphs.
\newblock {\em SIAM J.\ Discrete Math.}, 28:1876--1891, 2014.

\bibitem{CDHH13CanonicalAlg}
J.~Carmesin, R.~Diestel, M.~Hamann, and F.~Hundertmark.
\newblock Canonical tree-\penalty-200 decompositions of finite graphs
  {I.~Existence} and algorithms.
\newblock {\em J. Combin. Theory Ser. B}, 116:1--24, 2016.

\bibitem{CDHH13CanonicalParts}
J.~Carmesin, R.~Diestel, M.~Hamann, and F.~Hundertmark.
\newblock Canonical tree-\penalty-200 decompositions of finite graphs
  {II.~Essential} parts.
\newblock {\em J. Combin. Theory Ser. B}, 118:268--283, 2016.

\bibitem{confing}
J.~Carmesin, R.~Diestel, F.~Hundertmark, and M.~Stein.
\newblock Connectivity and tree structure in finite graphs.
\newblock {\em Combinatorica}, 34(1):1--35, 2014.

\bibitem{AbstractSepSys}
R.~Diestel.
\newblock Abstract separation systems.
\newblock To appear in {\em Order} (2017), DOI 10.1007/s11083-017-9424-5,
  arXiv:1406.3797v5.

\bibitem{TreeSets}
R.~Diestel.
\newblock Tree sets.
\newblock To appear in {\em Order} (2017), DOI 10.1007/s11083-017-9425-4,
  arXiv:1512.03781.

\bibitem{DiestelBook16}
R.~Diestel.
\newblock {\em Graph Theory \emph{(5th edition, 2016)}}.
\newblock Springer-Verlag, 2017.
\newblock \\ Electronic edition available at {\small\tt
  http://diestel-graph-theory.com/}.

\bibitem{ProfileDuality}
R.~Diestel, J.~Erde, and P.~Eberenz.
\newblock Duality theorem for blocks and tangles in graphs.
\newblock arXiv:1605.09139, to appear in {\em SIAM J.~Discrete Math}.

\bibitem{TangleTreeAbstract}
R.~Diestel and S.~Oum.
\newblock Tangle-tree duality in abstract separation systems.
\newblock arXiv:1701.02509, 2017.

\bibitem{TangleTreeGraphsMatroids}
R.~Diestel and S.~Oum.
\newblock Tangle-tree duality in graphs, matroids and beyond.
\newblock arXiv:1701.02651, 2017.

\bibitem{MonaLisa}
R.~Diestel and G.~Whittle.
\newblock Tangles and the {M}ona {L}isa.
\newblock arXiv:1603.06652.

\bibitem{EberenzMaster}
P.~Eberenz.
\newblock Characteristics of profiles.
\newblock MSc dissertation, Hamburg 2015.

\bibitem{FrankBook}
A.~Frank.
\newblock {\em Connections in Combinatorial Optimization}.
\newblock Oxford University Press, 2011.

\bibitem{BranchDecMatroids}
J.~Geelen, B.~Gerards, N.~Robertson, and G.~Whittle.
\newblock Obstructions to branch-decomposition of matroids.
\newblock {\em J.~Combin.\ Theory (Series B)}, 96:560--570, 2006.

\bibitem{TanglesInMatroids}
J.~Geelen, B.~Gerards, and G.~Whittle.
\newblock Tangles, tree-decompositions and grids in matroids.
\newblock {\em J.~Combin.\ Theory (Series B)}, 99:657--667, 2009.

\bibitem{GomoryHu}
R.~E. Gomory and T.~C. Hu.
\newblock Multi-terminal network flows.
\newblock {\em J. Soc. Ind. Appl. Math.}, 9:551--570, 1961.

\bibitem{GroheQuasi4}
M.~Grohe.
\newblock Quasi-4-connected components.
\newblock arXiv:1602.04505, 2016.

\bibitem{profiles}
F.~Hundertmark.
\newblock Profiles. {A}n algebraic approach to combinatorial connectivity.
\newblock arXiv:1110.6207, 2011.

\bibitem{OxleyBook}
J.~Oxley.
\newblock {\em Matroid Theory}.
\newblock Oxford University Press, 1992.

\bibitem{GMX}
N.~Robertson and P.~Seymour.
\newblock Graph minors. {X}. {O}bstructions to tree-decomposition.
\newblock {\em J.~Combin.\ Theory (Series B)}, 52:153--190, 1991.

\bibitem{TutteGrTh}
W.~T. Tutte.
\newblock {\em Graph Theory}.
\newblock Addison-Wesley, 1984.

\end{thebibliography}
\bibliographystyle{abbrv}

\newpage

\thispagestyle{empty}

\end{document}